\titleformat{\section}{\normalsize\bfseries}{\S \thesection}{1em}{}
\titleformat{\subsection}{\normalsize}{\S \thesubsection}{1em}{}
\providecommand{\keywords}[1]
{\small	\textbf{\textbf{Keywords---}} #1
}
\numberwithin{equation}{section}
\theoremstyle{definition}
\newtheorem{defn}{Definition}[section]
\newtheorem{rmk}[defn]{Remark}
\newtheorem{ex}{Example}
\newtheorem{prob}{Problem}
\theoremstyle{plain}
\newtheorem{thm}[defn]{Theorem}
\newtheorem{prop}[defn]{Proposition}
\newtheorem*{thmA}{Theorem A}
\newtheorem*{thmB}{Theorem B}
\newtheorem*{thmS}{Theorem S}
\newtheorem*{thmZ}{Theorem Z}
\newtheorem*{thmJ}{Theorem J}
\newtheorem*{lemA}{Lemma A}
\newtheorem*{lemB}{Lemma B}
\newtheorem*{lemC}{Lemma C}
\newtheorem*{lemD}{Lemma D}
\begin{document}

\title{\large A global Morse index theorem and applications to Jacobi fields on
CMC surfaces}
\author{\small Wu-Hsiung Huang}
\date{\small \today}
\maketitle

\begin{abstract}
In this paper, we establish a ``global" Morse index theorem. Given a hypersurface $M^{n}$ of
constant mean curvature, immersed in $\mathbb{R}^{n+1}$. Consider a
continuous deformation of ``generalized" Lipschitz domain $D(t)$ enlarging in
$M^{n}$. The topological type of $D(t)$ is permitted to change along $t$, so
that $D(t)$ has an arbitrary shape which can ``reach afar" in $M^{n}$, i.e.,
cover any preassigned area. The proof of the global Morse index theorem is
reduced to the continuity in $t$ of the Sobolev space $H_{t}$ of variation
functions on $D(t)$, as well as the continuity of eigenvalues of the
stability operator. We devise a ``detour" strategy by introducing a notion of
``set-continuity" of $D(t)$ in $t$ to yield the required continuities of
$H_{t}$ and of eigenvalues. The global Morse index theorem thus follows and
provides a structural theorem of the existence of Jacobi fields on domains in
$M^{n}$.
\end{abstract}

{\small \textbf{Mathematics Subject Classification 2020.} 53C23, 58E12, 35J25, 46E35, 49R05}

\keywords{Constant mean curvature, Sobolev space of variations, Jacobi field,
Morse index theorem, continuity of eigenvalues}
\newpage%

Consider a parametric hypersurface $M^{n}$ of constant mean curvature immersed
in $\mathbb{R}^{n+1}$ (abbr.~as CMC hypersurface), $n = \dim M^{n}$. For a
nonparametric case, where $u = u(\mathbf{x})$ has constant mean curvature
$H_{o}$, $\mathbf{x} \in \Omega \subset \mathbb{R}^{n}$, Giusti \cite{G78}
called a domain $\Omega$ \emph{extremal} for $u$, if the normal derivative of
$u$ on the boundary $\partial\Omega$ is infinite, and hence the domain of
definition can not be extended further. For example, a unit disk in
$\mathbb{R}^{2}$ is extremal for $H_{o} = 1$, noting that the upper hemisphere
defined on the disk as a graph has constant mean curvature $1$. Huang and Lin
\cite{HL98} extended the notion to a general parametric $M^{n}$, by defining a
domain $D \subset M^{n}$ ``extremal", if the first Dirichlet eigenvalue
$\lambda_{1}$ of the stability operator $L$ is zero, where $L$ is defined on
the space of smooth variation functions with zero boundary condition (see
\eqref{e1.2}).\\

This corresponds to the definition of Giusti in the nonparametric case. An
interesting result is that any negatively curved set $M_{-}$ in $M^{2}$ must be
``large", in the sense that $M_{-}$ contains at least an extremal domain, if
$M_{-} \subset Int \, M^{2}$. Remark that an extremal domain itself is large,
as Giusti's definition indicates. Also, an extremal domain in $M^{n}$ is
stable, and lies in a critical domain on which the first nontrivial Jacobi
field appears. The critical domain is called ``having the first conjugate
boundary" and becomes unstable as it slightly enlarges in $M^{n}$ (see
Theorem~\ref{T1.9}).\\

Given a continuous deformation of domains $D(t)$ enlarging in $M^{n}$, $t \in
[0,b]$, where $D(0)$ is a small neighborhood of a given point $p_o$ in $M^{n}$.
An extremal domain is denoted  in this paper by $D[\lambda_{1} = 0]$. Consider
in $M^{n}$ the larger domains $D[\lambda_{2} = 0]$, $D[\lambda_{3} = 0]$,
$\ldots$, on which a given $\lambda_{k}$ vanishes. Here $\lambda_{k}$ denotes
the $k$th Dirichlet eigenvalue of $L$. Let $t_{k}$ be defined by $D(t_{k}) =
D[\lambda_{k} = 0]$.

\begin{prob} \label{Prob1}
When do the nontrivial Jacobi fields appear on $D(t)$? And how they are
distributed along the $t$-axis, relative to $t_{k}$ as the reference points?
\end{prob}

Following Smale's earlier work \cite{S65}, Frid--Thayer established in
\cite{FT90} an abstract version of the Morse index theorem, which is profound
and interesting. However, they considered smooth deformation of a smooth domain
$D(t)$ enlarging in $M^{n}$. The assumption made the Morse index theorem
staying ``local" and restrictive. The restriction is that any $D(t)$ has smooth
boundary and the deformation is smooth in the sense that the boundary of $D(t)$
depends in a $C^{\infty}$ manner on $t$. In particular, all $D(t)$ are required
diffeomorphic to each other. Thus, for example, a disk $D(t)$ on a given
cylinder should be always of disk-type, and never be enlarged to become a
ring.\\

We treat in this paper a larger class of monotone continuums of domains $D(t)$
enlarging on $M^{n}$ ``continuously" in $t$, so that $D(t)$ may change its
topological type as $t$ increases, and hence possibly extend afar with various
forms. In this sense, we extend the Morse index theorem from local to
``global". Remark that $D(t)$ can be ``generalized" Lipschitz domains (see
Definition~\ref{D2.2}), not necessarily smooth nor homeomorphic to each
other.\\

The proof of the global Morse index theorem is reduced to the \emph{Sobolev
continuity} (see \eqref{e3.1}) as well as the continuity of eigenvalues
$\lambda_{k}(t)$ and $\widetilde{\lambda_{k}}(t)$ of the stability operators
$L$ and $\widetilde{L}$ in $t$ (see \eqref{e1.1}, \eqref{e1.2}, \eqref{e2.9},
\eqref{e2.11}). Remark that the Sobolev spaces do not deformed continuously in
general and the eigenvalues are \emph{not} always continuous as expected. For
example, they are not continuous in $t$, even when $D(t)$ is continuous in $t$
in terms of the Hausdorff distance (see \eqref{e2.2}).

\begin{prob} \label{Prob2}
\quad If the enlarging domains $D(t)$ in $M^{n}$ are allowed to change their
topological types, when the Sobolev continuity is maintained and the
eigenvalues of $L$ and of $\widetilde{L}$ are continuous in $t$?  In other
words, when are the geometric analysis still work on the deforming $D(t)$,
while topological type of $D(t)$ varies in some way?
\end{prob}

This is the main difficulty of our work. We find a ``detour" strategy by
introducing the notion of \emph{set-continuity} for $D(t)$ (see
Definition~\ref{D2.7}) to replace the Hausdorff continuity, and prove
successfully the continuity of Sobolev space $H_{t}$ (see Theorem~S in
\S\ref{S4}) along the detour routes, as well as the required eigenvalue
continuity. Based on this detour strategy, the Morse index theorem is extended
to global. For the example previously mentioned, a disk $D(t)$ on a given
cylinder can thus be deformed to a ring along certain route, maintaining the
required continuities (see Figure~\ref{F6}).\\

The global Morse index theorem (i.e., Theorem~A) which answers
Problem~\ref{Prob2} is now applicable to answer Problem~\ref{Prob1} to detect
the existence of nontrivial ``Jacobi fields", which satisfy zero boundary
condition with volume constraint on generalized Lipschitz domains $D(t)$ of
various topological types. Their multiplicity can also be counted along
$t$-axis (see Theorem~J in \S\ref{S5}).

In order to establish the eigenvalue continuity, one may see that the
continuity requirement for the deforming domains $D(t)$ should be imposed on
the continuity of the boundary of $D(t)$. But then the language and the
technique would become too complicated to deal with. That is why the detour
technique chooses set-continuity deformation, which turns out to be more neat
and successful.\\

More precisely, consider the Sobolev space $H_{t}$ of variation functions on
$D(t)$, with or without volume constraint. If the continuum of $D(t)$ satisfies
the set-continuity, then $H_{t}$ is continuous in $t$. Namely, the Sobolev
continuity is established (see Theorem~S). Using the Sobolev continuity, the
required eigenvalue continuity can be shown in Theorem~B, on behalf of the
arguments in \cite{FT90} (see also \S\ref{S4}). Thus, Problem~\ref{Prob2} is
solved, and the global Morse index theorem is established. In this light, we
obtain a structural distribution theorem (see Theorem~J) of Jacobi fields on
$D(t)$, which discretely appear along the $t$-axis.\\

As an illustration, given a set-continuous continuum $\mathcal{D}$ starting
with $D(0)$, a very small neighborhood of a point in $M^{n}$. Let $t = t_{1}$
such that $D(t_{1}) \equiv D[\lambda_{1} = 0]$ is an extremal domain. There
exists a first nontrivial Jacobi field on $D(c)$, for some $c > t_{1}$. It is
clear that
\[
  D[\lambda_{1} = 0] \subset D(c) \subset D[\lambda_{2} = 0].
\]
Remark that $\partial D(c)$ is the so-called first conjugate boundary. We prove
in Theorem~J that there exists a nontrivial Jacobi field on some $D(t)$ such
that
\[ 
  D[\lambda_{k} = 0] \subset D(t) \subset D[\lambda_{k+1} = 0],
\]
where $\lambda_{k+1}$ is the adjacent eigenvalue of $L$ next to $\lambda_{k}$,
and the independent number of such Jacobi fields on $D(t)$ is at least $m_{k} +
m_{k+1} - 1 > 0$, where $m_{k}$ denotes the multiplicity of the Dirichlet
eigenvalue $\lambda_{k}$. This answers Problem~\ref{Prob1}.
\\

Technically, the Sobolev continuity, i.e., Theorem~S, is the core and difficult
part of the paper, which inevitably involves subtle arguments in the Sobolev
theory. For example, the Step~3 of the proof of Theorem~S is technically
sophisticated. We elaborated the intuitive analysis of the three closure
theorem, i.e., Theorem~Z in \S\ref{S5}, about the infinitesimal orders of
smooth functions vanishing at the corners of $D(t)$. We need the three closure
theorem on \emph{generalized} Lipschitz domains to support the Sobolev
continuity.\\

As the topological type of $D(t)$ changes, when joint points of $\partial
D(t)$(see Definition~\ref{D2.2}) appear, we have to focus on treating the
critical behavior of the relevant variation functions around the joint points.
The treatments produce the dramatic aspect of this paper.\\

It is worthwhile to note a result by Barbosa and B\'{e}rard \cite{BB00}: Given
a domain $D$ in a CMC hypersurface of $\mathbb{R}^{n+1}$, they proved that the
``twisted" eigenvalues $\{\widetilde{\lambda}_{k}\}$ of the twisted operator
$\widetilde{L}$ (see \eqref{e1.6}) and the Dirichlet eigenvalues
$\{\lambda_{k}\}$ of the stability operator $L$ are intertwined in the form
\[ 
  \lambda_{k} \leq \widetilde{\lambda}_{k} \leq \lambda_{k+1},
    \quad k = 1,2,3,\ldots.
\]
Their static result provides a partial indication of our Theorem~J. In fact, it
could be obtained by the detour approach that we develop here.

\bigskip\noindent\emph{Acknowledgement.} \quad A deep gratitude to Chun-Chi
Lin, my previous coauthor in \cite{HL98}, for the invaluable suggestions and
comments in working on this paper. Among other things, he drew my attention to
the works of [F-T] and [B-B].

\section{Preliminary} \label{S1}
We shall start with the $C^{\infty}$-framework in \S\ref{S1} and extend those
related concepts to Sobolev spaces after \S\ref{S2}. Let $M^{n}$ be a
hypersurface of constant mean curvature $H_{0}$ immersed in $\mathbb{R}^{n+1}$,
$n = \dim M^{n}$. We call such $M^{n}$ a ``CMC" hypersurface. Let $D$ be an
open set in $M^{n}$ with compact smooth boundary, which is called a smooth
domain in $M^{n}$. Define
\begin{equation} \label{e1.1}
\begin{split}
  \mathcal{F}(D)
  &:= \{ f \in C^{\infty}(D) \cap C^{1}(\overline{D}) \:;\: f|_{\partial D} = 0
    \}, \\
  \mathcal{G}(D)
  &:= \left\{ f \in \mathcal{F}(D) \:;\: \int_{D} f \, dM = 0 \right\},
\end{split}
\end{equation}
where $\mathcal{G} = \mathcal{G}(D)$ is the totality of variation functions in
$\mathcal{F}(D)$ having the volume constraint. Consider in $\mathcal{F}(D)$ the
$L^{2}$-metric
\[
  \langle f, g \rangle = \int_{D} fg \, dM.
\]
Note that the constraint $\mathcal{G}$ is a hyperplane in $\mathcal{F}(D)$. Let
$L$ be the \emph{stability operator} given by
\begin{equation} \label{e1.2}
  Lf = -\Delta_{M} f - |B|^{2} f, \quad \forall\, f \in \mathcal{F}(D),
\end{equation}
where $\Delta_{M} \equiv (*)_{ii}$, using the language of orthonormal moving
frames, is the Laplacian of $f$ relative to the metric of $M^{n}$. Note that
$|B|^{2}$ denotes $\sum_{i,j}^{n} h^{2}_{ij}$, with $h_{ij}$ the coefficients
of the second fundamental form $B$ of $M^{n}$ in $\mathbb{R}^{n+1}$.

Consider a variation field $\varphi$ on $D$, with $\varphi \in \mathcal{F}(D)$
corresponding to a $C^{\infty}$-variation $\mathbf{x}(p,\tau)$ of $D$ in
$\mathbb{R}^{n+1}$ so that $\mathbf{x}(p,0) = p \in D$, $\tau \in
(-\varepsilon,\varepsilon)$ and $\frac{\partial \mathbf{x}}{\partial \tau}(p,0)
= \varphi \cdot N$. Here $N$ is a unit normal on $D$ in $\mathbb{R}^{n+1}$.
Denote $D_{\tau} \equiv \{ \mathbf{x}(p,\tau) \:;\: p \in D \}$ and define the
functional
\[ 
  J = J(\tau) := A + n \: H_{0} V,
\]
where $A$ is the area of $D_{\tau}$, and $V$ is the volume given by
\[ 
  V
  = V(\tau)
  := \frac{1}{n+1} \int_{D_{\tau}} \langle \mathbf{x}, N \rangle \, dD_{\tau},
\]
which represents the oriented volume of a cone centered at the origin $0 \in
\mathbb{R}^{n+1}$ with base $D_{\tau}$. The first variation $J'(0)$ is zero for
any $\varphi \in \mathcal{F}(D)$. This is a character of CMC hypersurfaces with
mean curvature $H_{0}$.

The second variation of $J$ relative to $\varphi \in \mathcal{G}(D)$ is
\[ 
  J''(0) = \int_{D} L\varphi \cdot \varphi \, dM.
\]
When $J''(0) \geq 0$, for all $\varphi \in \mathcal{G}(D)$, we say that $D$ is
\emph{stable}. There exist eigenfunctions of $L$ on $\mathcal{F}(D)$,
\[
  u_{1}, u_{2}, u_{3}, \ldots,
\]
which constitute an orthonormal basis of $\mathcal{F}(D) \subset L^{2}(D)$ and
satisfy
\begin{equation} \label{e1.3}
  Lu_{k} = \lambda_{k} \: u_{k}, \quad k = 1,2,3,\ldots,
\end{equation}
with eigenvalues $\lambda_{k}$ in the form,
\[ 
  \lambda_{1} < \lambda_{2} \leq \lambda_{3} \leq \lambda_4 \leq \cdots
  \to \infty.
\]
Note that
\[ 
  D' \subset D'' \;\; \textrm{implies} \;\;
  \lambda_{k}(D') \geq \lambda_{k}(D'')
\]
for each $k = 1,2,3,\ldots$.

\begin{defn} \label{D1.1}
A domain $D$ in $M^{n}$ is called \emph{extremal} if
\[
  \lambda_{1}(D) = 0.
\]
\end{defn}

An extremal domain $D$ has interesting geometry properties such as the
variation rate $\frac{dH_{\tau}}{dV_{\tau}}$ of $D_{\tau}$ changes sign on $D$,
where the variation function is in $\mathcal{F}(D)$. Here $V_\tau$ is the
volume parameter, and $H_{\tau}$ denotes the mean curvature of $D_{\tau}$ in
$\mathbb{R}^{n+1}$ such that $H_{\tau}$ is constant on $D_{\tau}$. For example,
see this on a hemisphere.

It also provides a criterion for the extent of concavity of CMC surfaces. The
negative curved set $M_{-}$ of a CMC surface $M$ must be \emph{large} in the
sense that $M_{-}$ contains at least an extremal domain, unless $M_{-}$ touches
the given boundary of $M$.  Note that any proper subdomain of a nonparametric
$M$ is not an extremal domain. It becomes extremal if and only if its domain of
definition is pushed to the extreme that M can not be extended any further (see
\cite{HL98}). We will show more significant properties of extremal domains
related to stability and Jacobi fields, later in the paper.

Define the bilinear form $I = I_{D} \colon \mathcal{F}(D) \times \mathcal{F}(D)
\to \mathbb{R}$ by
\begin{align}
\label{e1.4}
  I(f,g)
  &:=  \int_{D} (Df \cdot Dg - |B|^{2} fg) \, dM \\
\label{e1.5}
  &= \int_{D} Lf \cdot g \, dM.
\end{align}
Remark that
\[ 
  J''(0) = \int_{D} Lf \cdot f \,dM = I(f,f),
\]
$\forall\, f \in \mathcal{G}(D)$. We also write $I(f) \equiv I(f,f)$ for
simplicity.

\begin{defn}[Unstable cones] \label{D1.2}
Given a domain $D$ in $M^{n}$, the \emph{unstable cones} in the ``Dirichlet
sense" (i.e., without volume constraint) are defined by
\begin{align*}
  \Lambda &:= \{ f \in \mathcal{F}(D); I(f) \leq 0 \}, \\
  \Lambda_{-} &:= \{ f \in \mathcal{F}(D); f = 0 \textrm{ or } I(f) < 0 \}.
\end{align*}
Call $\widetilde{\Lambda} := \Lambda \cap \mathcal{G}$ and
$\widetilde{\Lambda}_{-} := {\Lambda}_{-} \cap \mathcal{G}$ the \emph{unstable
cones} in the ``constraint" $\mathcal{G}$. Note that $D$ is stable if and only
if $\widetilde{\Lambda}_{-} = \{0\}$; and $D$ is unstable if and only if
$\widetilde{\Lambda}_{-} \neq \{0\}$.
\end{defn}

\begin{defn}[Jacobi fields] \label{D1.3}
Let $N$ be the unit normal vector field of $M$ and $\varphi \in \mathcal{G} =
\mathcal{G}(D)$ on $D$. A variation field $\varphi \cdot N$ is called a
(nontrivial) \emph{Jacobi field} on $D$, if $\varphi \neq 0$ (i.e., $\varphi$
is not identically zero), and $I(\varphi,g) = 0$, $\forall\, g \in
\mathcal{G}$, i.e., $\varphi \in \operatorname{Ker} I$ on $\mathcal{G}$. In
fact, this definition is equivalent to saying that $L\varphi = 0$ in the
classical sense. Note that $\varphi$ should be called a Jacobi variation
function. But it is conventional to call it just a ``Jacobi field".
\end{defn}

\begin{defn}[Conjugate boundaries] \label{D1.4}
A domain on which a Jacobi field exists  (in the sense given by
Definition~\ref{D1.3}) is called having \emph{conjugate boundary}. Moreover, if
a domain has a conjugate boundary, but has no such subdomain smaller, the
domain is called having the \emph{first conjugate boundary}.
\end{defn}

\begin{rmk} \label{R1.5}
There are two notions of stability for domains on CMC hypersurfaces. The
extremal domains are related to the notion of the Dirichlet stability, while
domains with the first conjugate boundary are related to the notion of the
stability with volume constraint.
\end{rmk}

\begin{defn} \label{D1.6}
We say that the cone $\Lambda$ \emph{tangents to} the hyperplane $\mathcal{G}$
at $\varphi \in \mathcal{G}$, if $\varphi \neq 0$, $\varphi \in
\widetilde{\Lambda}$, and $\widetilde{\Lambda}_{-} = \{0\}$ (see
Figure~\ref{F1}).
\begin{figure}[H]
\centering
\includegraphics[width=0.7\textwidth]{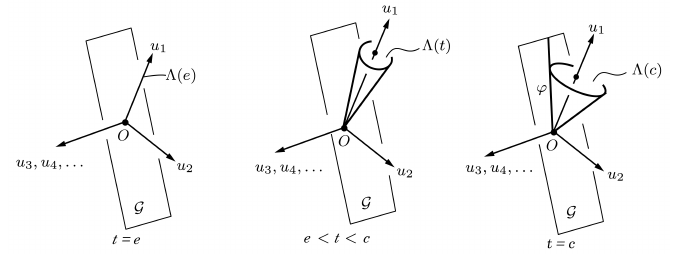}
\caption{} \label{F1}
\end{figure}
\end{defn}

\begin{prop} \label{P1.7}
Given $D \subset M^{n}$, let the cone $\Lambda$ tangent to $\mathcal{G}$ at
$\varphi \in \mathcal{G}$, then $\varphi$ is a Jacobi field on $D$. \end{prop}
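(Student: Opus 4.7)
The hypothesis that $\Lambda$ tangents to $\mathcal{G}$ at $\varphi$ packages three facts: (i) $\varphi \neq 0$, (ii) $\varphi \in \widetilde{\Lambda}$, so $I(\varphi) \le 0$, and (iii) $\widetilde{\Lambda}_{-} = \{0\}$. My first step would be to extract from (iii) that $I(f) \ge 0$ for every $f \in \mathcal{G}$, since otherwise $f$ would be a nonzero element of $\widetilde{\Lambda}_{-}$. Applied to $f = \varphi$, this gives $I(\varphi) \ge 0$; combined with (ii) this yields $I(\varphi) = 0$. Geometrically, $\varphi$ minimizes the quadratic form $I$ on the hyperplane $\mathcal{G}$.

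The second step is the standard first-variation argument. Since $\mathcal{G} = \{f \in \mathcal{F}(D) : \int_{D} f\, dM = 0\}$ is a linear subspace of $\mathcal{F}(D)$, for every $g \in \mathcal{G}$ and every $t \in \mathbb{R}$ the function $\varphi + tg$ also lies in $\mathcal{G}$. Using the bilinearity and symmetry of $I$ we expand
\[
  I(\varphi + tg) = I(\varphi) + 2t\, I(\varphi, g) + t^{2} I(g)
  = 2t\, I(\varphi, g) + t^{2} I(g),
\]
which must be $\ge 0$ for every real $t$. Dividing by $t$ and letting $t \to 0^{+}$ and $t \to 0^{-}$ forces $I(\varphi, g) = 0$.

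Since $g \in \mathcal{G}$ was arbitrary, we conclude $\varphi \in \operatorname{Ker} I$ on $\mathcal{G}$, which together with $\varphi \neq 0$ is precisely the definition of a nontrivial Jacobi field given in Definition~\ref{D1.3}. No obstacle is expected here: the whole argument is a Lagrange-multiplier/first-variation computation. The only subtle point worth recording is that one does not need to invoke the self-adjointness of $L$ or the identification of $I(\varphi, g)$ with $\int_{D} L\varphi \cdot g\, dM$; the statement follows purely from the bilinear structure of $I$ and the linearity of the constraint $\mathcal{G}$, so that the conclusion is merely that $\varphi \in \operatorname{Ker} I|_{\mathcal{G}}$, the form of Jacobi field that Definition~\ref{D1.3} adopts.
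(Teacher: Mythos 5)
Your proposal is correct and follows essentially the same route as the paper's proof: both deduce $I \geq 0$ on $\mathcal{G}$ from $\widetilde{\Lambda}_{-} = \{0\}$, note $I(\varphi) = 0$, and then apply the first-variation argument to $I(\varphi + tg)$. Your explicit one-sided-limit derivation of $I(\varphi,g)=0$ is just an unpacked version of the paper's observation that $F(t)=I(\varphi+tg)$ attains a minimum at $t=0$, so $F'(0)=2I(\varphi,g)=0$.
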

\begin{proof}
Clearly, $I(f) \geq 0$, for any $f \in \mathcal{G}$, since
$\widetilde{\Lambda}_{-} = \{0\}$. Given $g \in \mathcal{G}$, we claim that
$I(\varphi,g) = 0$. In fact, define $F(t) = I(\varphi+tg)$, then $F(t) \geq 0$
for each real number $t$, since $\varphi+tg \in \mathcal{G}$. Also, $F(0) =
I(\varphi) = 0$ as $\varphi \in \widetilde{\Lambda} - \widetilde{\Lambda}_{-}$.
Thus $F(t)$ attains minimum at $t = 0$. We have
\begin{align*}
  0
  &= F'(0)
  = \frac{d}{dt} \big( I(\varphi+tg,\varphi+tg) \big) \big|_{t=0} \\
  &= \frac{d}{dt} \big( I(\varphi,\varphi) + 2tI(\varphi,g)
    + t^{2} I(g,g) \big) \big|_{t=0}
  = 2I(\varphi,g). \qedhere
\end{align*}
\end{proof}

\begin{prop} \label{P1.8}
Given $D \subset M^{n}$, $\varphi \in \mathcal{G}$ is a Jacobi field on $D$, if
and only if
\[ 
  L\varphi = \frac{1}{|D|} \int_{D} L\varphi \, dM \in \mathbb{R},
\]
where $|D| := \operatorname{area}(D)$.
\end{prop}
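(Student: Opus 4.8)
The plan is to establish the two implications separately: the ``if'' direction is immediate, and the ``only if'' direction carries the content.

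For the ``if'' direction I would argue as follows. Assume $L\varphi$ equals the constant $c := \frac{1}{|D|}\int_D L\varphi\,dM$. Then for every $g\in\mathcal{G}(D)$, formula \eqref{e1.5} gives $I(\varphi,g)=\int_D L\varphi\cdot g\,dM = c\int_D g\,dM = 0$, the last equality holding because the volume constraint forces $\int_D g\,dM=0$. Hence $\varphi\in\operatorname{Ker} I$ on $\mathcal{G}$, i.e.\ $\varphi$ is a Jacobi field on $D$ (it being understood that $\varphi\not\equiv 0$).

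For the ``only if'' direction, suppose $\varphi$ is a Jacobi field, so $I(\varphi,g)=0$ for every $g\in\mathcal{G}(D)$. Since $\varphi\in C^\infty(D)$, the function $L\varphi=-\Delta_M\varphi-|B|^2\varphi$ is smooth, hence continuous, in the interior of $D$. Testing against functions with compact support in $D$ — where \eqref{e1.5} holds with no boundary term by an elementary integration by parts — the Jacobi condition yields $\int_D L\varphi\cdot g\,dM=0$ for every $g\in C_c^\infty(D)$ with $\int_D g\,dM=0$, since each such $g$ belongs to $\mathcal{G}(D)$. The main step is then to deduce that $L\varphi$ is constant on $D$. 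If it were not, continuity would supply points $p_1,p_2\in D$ and a value $a$ with $L\varphi(p_1)<a<L\varphi(p_2)$, hence disjoint balls $B_1\ni p_1$ and $B_2\ni p_2$ with $\overline{B_i}\subset D$, on which $L\varphi<a$ and $L\varphi>a$ respectively; choosing $\eta_i\in C_c^\infty(B_i)$ with $\eta_i\ge 0$ and $\int_D\eta_i\,dM=1$ and putting $g:=\eta_1-\eta_2\in\mathcal{G}(D)$ would give $\int_D L\varphi\cdot g\,dM=\int_D L\varphi\,\eta_1\,dM-\int_D L\varphi\,\eta_2\,dM<a-a=0$, a contradiction. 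Once $L\varphi\equiv c$ is known, the finiteness of the area of $D$ forces $c=\frac{1}{|D|}\int_D L\varphi\,dM$, which is the assertion.

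I expect the one genuinely delicate point to be that $\Delta_M\varphi$ need not be integrable up to $\partial D$, so that $\int_D L\varphi\,dM$ is not obviously finite a priori. The remedy is precisely the order of steps above: first establish constancy of $L\varphi$ using only test functions supported in the interior — where $\varphi$ is smooth and no boundary term arises — and only afterwards identify the constant with the average. In particular no global Sobolev regularity of $\varphi$ near $\partial D$ is needed; the whole argument lives in the interior of $D$.
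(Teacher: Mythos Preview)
Your proof is correct and follows essentially the same approach as the paper: the ``if'' direction is the same one-line computation, and for the ``only if'' direction both arguments build a mean-zero test function concentrated near two points where $L\varphi$ takes different values to force $I(\varphi,g)\neq 0$. Your construction is a bit more explicit (normalizing bumps to unit integral and using an intermediate value $a$), and your closing remark about establishing constancy before worrying about integrability of $L\varphi$ up to $\partial D$ is a nice point the paper leaves implicit.
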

\begin{proof}
When $L\varphi$ = a constant $c$,
\[
  I(\varphi,g)
  = \int_{D} L\varphi \cdot g \, dM
  = c\int_{D} g \, dM
  = 0, \quad \forall\, g \in \mathcal{G}.
\]
Hence $\varphi$ is a Jacobi field on $D$. By integration on $D$, $c$ must
satisfy $c = \frac{1}{|D|} \int_{D} L\varphi \, dM$. Conversely, suppose
$\varphi$ is a Jacobi field on $D$ and $L\varphi$ is not a constant, then there
exist two points $p$ and $q$ in $D$ such that $L\varphi(p) \neq L\varphi(q)$.
Construct $g \in \mathcal{G}$ by letting $g = 0$ on $D - N_{p} \cup N_{q}$,
where $N_{p}$ and $N_{q}$ are respectively small and disjoint neighborhoods of
$p$ and $q$ such that $|N_{p}| = |N_{q}|$. Now define $g \in \mathcal{G}$ by
letting ``basically" $g = +1$ in $N_{p}$ and $g = -1$ in $N_{q}$. One may
smooth out $g$ so that $g \in \mathcal{G}$ and $\int_{D} L\varphi \cdot g \, dM
\neq 0$. This contradicts the assumption that $\varphi \in \mathcal{G}$ is a
Jacobi field.
\end{proof}

Let $\widetilde{L} \colon \mathcal{G} \to C^{\infty}(D)$ be defined by
\begin{equation} \label{e1.6}
  \widetilde{L}g
  := Lg - \frac{1}{|D|} \int_{D} Lg, \quad \forall\, g \in \mathcal{G},
\end{equation}
omitting ``$dM$" hereafter, if no ambiguity arises. Clearly, $\int_{D}
\widetilde{L}g = 0$, and $\varphi \in \mathcal{G}$ is a Jacobi field on $D$ if
and only if $\widetilde{L}\varphi = 0$.

\begin{thm}[Jacobi fields and unstability] \label{T1.9}
Suppose that $D \subset M^{n}$ has conjugate boundary, i.e., there exists a
Jacobi field $\varphi$ on $D$. Then, $D'$ is unstable, $\forall\, D' \subset
M^{n}$ with $D'\supsetneqq D$.
\end{thm}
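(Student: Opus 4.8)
The plan is to exploit the existence of the Jacobi field $\varphi$ on $D$ to build, on any strictly larger domain $D' \supsetneqq D$, an explicit test function lying in $\widetilde{\Lambda}_{-}(D')$, thereby showing $D'$ is unstable via Definition~\ref{D1.2}. First I would extend $\varphi$ to $D'$ by setting $\widehat{\varphi} = \varphi$ on $D$ and $\widehat{\varphi} = 0$ on $D' \setminus D$. Since $\varphi \in \mathcal{F}(D)$ vanishes on $\partial D$, the extension $\widehat\varphi$ is a legitimate Lipschitz (hence $H^1$) variation function on $D'$, and because $I_{D}(\varphi,g)=0$ for all $g \in \mathcal{G}(D)$ together with $L\varphi = c$ constant (Proposition~\ref{P1.8}), one computes $I_{D'}(\widehat\varphi) = I_D(\varphi) = \int_D L\varphi\cdot\varphi = c\int_D \varphi = 0$ (the volume constraint $\int_D\varphi=0$ kills the term). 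So $\widehat\varphi$ is a nonzero function on $D'$ with $I_{D'}(\widehat\varphi)=0$, but it need not yet satisfy the volume constraint on $D'$, nor is its quadratic form strictly negative — both of which are required to land in $\widetilde\Lambda_-(D')$.

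The second step is to perturb $\widehat\varphi$ into the constraint and push the quadratic form strictly below zero. Here I would use that $D' \setminus \overline D$ has nonempty interior, so I can choose a bump function $\psi$ supported in $D' \setminus \overline D$ with $\int_{D'}\psi \neq 0$; then set $g := \widehat\varphi - \big(\int_{D'}\widehat\varphi\big)\big(\int_{D'}\psi\big)^{-1}\psi$, which lies in $\mathcal{G}(D')$. Because $\widehat\varphi$ and $\psi$ have disjoint supports, $I_{D'}(g) = I_{D'}(\widehat\varphi) + \text{(coefficient)}^2\, I_{D'}(\psi) = \big(\int_{D'}\widehat\varphi / \int_{D'}\psi\big)^2 I_{D'}(\psi)$, and if $\int_{D'}\widehat\varphi = \int_D\varphi = 0$ this already vanishes. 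To get strict negativity I would instead consider a two-parameter family: take $g_s := \widehat\varphi + s\eta$ where $\eta\in\mathcal{G}(D')$ is arbitrary, and analyze $s\mapsto I_{D'}(g_s) = I_{D'}(\widehat\varphi) + 2s\, I_{D'}(\widehat\varphi,\eta) + s^2 I_{D'}(\eta)$. Since $\widehat\varphi$ is the zero-extension of a Jacobi field, $I_{D'}(\widehat\varphi,\eta) = \int_{D}L\varphi\cdot\eta + \int_{D'\setminus D}(\cdots) $ — one must show this linear functional in $\eta$ is not identically zero, which follows because $\widehat\varphi$ is not an eigenfunction/Jacobi field on the larger domain $D'$ (the boundary $\partial D$ sits in the interior of $D'$ where $\widehat\varphi$ is merely Lipschitz, not $C^1$, so $L\widehat\varphi$ is not a function); picking $\eta$ with $I_{D'}(\widehat\varphi,\eta)\neq 0$ and a suitably small $s$ of the right sign yields $I_{D'}(g_s) < 0$, so $g_s \in \widetilde\Lambda_-(D')\setminus\{0\}$.

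An alternative, possibly cleaner route is to invoke monotonicity of eigenvalues directly: by Definition~\ref{D1.3} and Proposition~\ref{P1.8}, the existence of a Jacobi field means $0$ is an eigenvalue of the twisted operator $\widetilde L$ on $D$, i.e.\ the first twisted eigenvalue $\widetilde\lambda_1(D) \le 0$; combined with strict domain monotonicity $\widetilde\lambda_1(D') < \widetilde\lambda_1(D)$ for $D' \supsetneqq D$ (the strict inequality coming from the fact that a minimizing function on $D'$ cannot be supported in the strictly smaller $D$ without forcing a contradiction at $\partial D$), one gets $\widetilde\lambda_1(D') < 0$, hence $\widetilde\Lambda_-(D')\neq\{0\}$ and $D'$ is unstable. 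I would likely present the direct test-function argument as the main proof since the paper's framework at this point is the $C^\infty$-setting and the twisted-eigenvalue monotonicity with \emph{strict} inequality is itself something that needs the same unique-continuation-type input.

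The main obstacle is establishing the \emph{strict} drop — ruling out that $I_{D'}$ remains $\ge 0$ on all of $\mathcal G(D')$ with $\widehat\varphi$ as a genuine Jacobi field on $D'$ too. The key point to nail down is that $\widehat\varphi$, being only Lipschitz across $\partial D$ and identically zero on an open piece of $D'$, cannot solve $L\widehat\varphi = \text{const}$ on $D'$: if it did, unique continuation for the elliptic operator $L$ (or simply the jump in the normal derivative of $\varphi$ across $\partial D$, which is generically nonzero) would be violated. One must either cite unique continuation for $L = -\Delta_M - |B|^2$ on $M^n$, or argue more elementarily that $\partial\varphi/\partial\nu \not\equiv 0$ on $\partial D$ (true because $\varphi$ is a nontrivial solution of an elliptic equation vanishing on $\partial D$), and that this nonvanishing normal derivative is exactly what makes the linear functional $\eta \mapsto I_{D'}(\widehat\varphi,\eta)$ nontrivial on $\mathcal G(D')$. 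I expect this unique-continuation / Hopf-lemma input, carefully phrased for generalized Lipschitz domains, to be where the real work lies; the rest is routine bilinear-form manipulation.
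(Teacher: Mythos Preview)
Your approach is essentially the same as the paper's: extend $\varphi$ by zero to $D'$, perturb by a test function to exploit the nonvanishing normal derivative $\partial\varphi/\partial\nu$ on $\partial D$, and conclude $I_{D'}<0$ for a suitable element of $\mathcal{G}(D')$. The paper carries this out by the explicit ansatz $g=\tfrac{1}{\alpha}\psi+\alpha h$ with $h|_{\partial D\cap D'}=-\beta\,\partial\varphi/\partial\nu$, so that the cross term $2I_{D'}(\psi,h)=-2\int_{\Gamma}\beta(\partial\varphi/\partial\nu)^{2}<0$ by Hopf's lemma and dominates the $O(\alpha^{2})$ remainder---exactly the concrete realization of your ``pick $\eta$ with $I_{D'}(\widehat\varphi,\eta)\neq 0$'' step.
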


\begin{rmk} \label{R1.10}
Theorem~\ref{T1.9} relates to the initial case of the Morse index theorem,
which establishes the correspondence of nullity and index (see \eqref{e2.17} in
Theorem~A). The existence of one (independent) Jacobi field represents ``one
nullity", while the appearance of one (independent) nonzero unstable variation
means ``one index".
\end{rmk}

\begin{proof}
\emph{Step~$1$.} Consider $g = \frac{1}{\alpha} \psi + \alpha h \in
\mathcal{G}(D')$, $\alpha > 0$ very small, where
\[ 
  \psi = \begin{cases}
  \varphi &\textrm{on $D$}, \\
  0 &\textrm{on $G = D'-D$},
  \end{cases}
\]
\begin{figure}[h]
\centering
\includegraphics[width=0.25\textwidth]{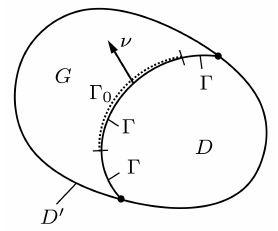}
\caption{} \label{F2}
\end{figure}
and $h \in \mathcal{G}(D')$ such that (see Figure~\ref{F2})
\[ 
  h|_{\Gamma} = -\beta \frac{\partial \varphi}{\partial \nu}, \quad
  \int_{D'} h = 0, \quad \Gamma := \partial D \cap D',
\]
where $\beta \in \mathcal{F}(D')$ with $\beta \geq 0$ and $\beta|_{\Gamma_{0}}
= 1$, $\Gamma_{0} \subset\subset \Gamma$.

\bigskip\noindent\emph{Step~$2$.} We shall use the method of moving frames to
simplify the following computation. Write for abbreviation that
\[ 
  I_{D'}(u,v) = \int_{D'} (u_i v_i - |B|^{2} uv),
\]
where $u_i = D_i u$. Also, the summation notation and the area element ``$dM$"
are omitted. Consider the second variation $J''(0)$ along $g$ on $D'$. Note
that $g$ is piecewise smooth on $D'$ and \eqref{e1.5} is not well defined, but
formula~\eqref{e1.4} is still valid. We have
\begin{gather*}
\begin{split} 
  J''_{D'}(0)
  &= I_{D'}(g)
  = I_{D'} \left( \frac{1}{\alpha} \psi + \alpha h, \:
    \frac{1}{\alpha} \psi + \alpha h \right) \\
  &= \frac{1}{\alpha^{2}} I_{D}(\varphi,\varphi) + 2I(\varphi,h)
    + \alpha^{2} I_{D'}(h,h) \\
  &= \mathrm{I} + \mathrm{II} + \mathrm{III}, \quad \textrm{respectively},
\end{split} \\
\begin{split} 
  \mathrm{I}
  &= \frac{1}{\alpha^{2}} \int_{D} (\varphi_i \varphi_i - |B|^{2} \varphi^{2})
  = \frac{1}{\alpha^{2}} \int_{D}
    \big( (\varphi_i \varphi)_i - \varphi_{ii} \varphi - |B|^{2} \varphi^{2}
    \big) \\
  &= \frac{1}{\alpha^{2}} \left( \int_{\partial D}
    \frac{\partial \varphi}{\partial \nu} \varphi
    + \int_{D} L\varphi \cdot \varphi \right)
  = 0,
    \quad \textrm{(since $\varphi \big|_{\partial D} = 0$ and $\varphi$ is
    Jacobi on $D$)}
\end{split} \\
\begin{split} 
  \mathrm{II}
  &= 2\int_{D} (\varphi_i h_i - |B|^{2} \varphi h)
  = 2\int_{\partial D} \frac{\partial \varphi}{\partial \nu} \cdot h
    + 2\int_{D} L\varphi \cdot h \\
  &= -2 \int_{\Gamma} \beta \left( \frac{\partial \varphi}{\partial \nu}
    \right)^{2} + 0, \quad \textrm{(since $\varphi$ is Jacobi on $D$)}.
\end{split}
\end{gather*}
Hence
\[ 
  J''(0) = 0 + (-A) + \alpha^{2} I_{D'}(h,h),
\]
where $-A$ is a negative number independent of $\alpha$, since $\frac{\partial
\varphi}{\partial \nu} \neq 0$ by Hopf's sphere theorem. Choosing $\alpha$
sufficiently small, we have $J''_{D'}(0) < 0$, i.e., $D'$ is unstable.
\end{proof}

\begin{rmk} \label{R1.11}
For precision’s sake, let $\gamma$ be an immersion of $M^{n}$ into
$\mathbb{R}^{n+1}$ as a hypersurface of constant mean curvature. Consider the
metric on $M^{n}$ induced by $\gamma$ from $\mathbb{R}^{n+1}$. We may employ
the calculus of variations on a domain $D$ in $M^{n}$, with variation functions
in $\mathcal{F}(D)$ or in $\mathcal{G}(D)$ defined by \eqref{e1.1}, where the
stability operators $L$ or $\widetilde{L}$ is given by \eqref{e1.2} or
\eqref{e1.6}, respectively. The setting can be handled purely by analysis. When
we like to consider $D$ in a CMC hypersurface $N^{n}$ sitting in
$\mathbb{R}^{n+1}$, we merely regard $\varphi$ as the identity map of $N^{n}$,
letting $M^{n}$ = $N^{n}$. This version applies to the later sections where
continuums of domains $D(t)$ in $M^{n}$ are considered.
\end{rmk}

\section{Sobolev variation and a global Morse index theorem} \label{S2}
We shall extend the previous smooth setting to consider Sobolev spaces of
variation functions on generalized Lipschitz domains (see
Definitions~\ref{D2.1} and \ref{D2.2}). The advantage of treating the
``generalized" notion of Lipschitz domains is that the topological type of $D =
D(t)$ will be allowed to change in $t$.

\subsection{The Sobolev setting} \label{S2.1}
Let $W^{1,2}(D)$ be the Sobolev space consisting of all $f$ in $L^{2}(D)$,
which have weak derivatives $Df$ with $Df \in L^{2}(D)$. Consider
\begin{align*}
  \mathcal{F}_{m}(D)
  &:= \{ f \in C^{\infty}(D) \cap C^{m}(\overline{D}) \:;\: f|_{\partial D} = 0
    \}, \\
  \mathcal {G}_{m}(D)
  &:= \left\{ f \in F_{m}(D) \:;\: \int_{D} f \, dM = 0 \right\}
\end{align*}
with $m$= 0 or 1, and define two spaces $E(D)$ and $H(D)$ of variation
functions on $D$ in $W^{1,2}(D)$ by
\begin{equation} \label{e2.1}
\begin{split}
  E(D)
  &:= \textrm{ the closure of $\mathcal{F}_{0}(D) \cap W^{1,2}(D)$ in
    $W^{1,2}(D)$}, \\
  H(D)
  &:= \textrm{ the closure of $\mathcal{G}_{0}(D) \cap W^{1,2}(D)$ in
    $W^{1,2}(D)$},
\end{split}
\end{equation} \\
Remark that $\mathcal{F}_{1}(D)=\mathcal{F}(D)$ and
$\mathcal{G}_{1}(D)=\mathcal{G}(D)$, where $\mathcal{F}(D)$ and
$\mathcal{G}(D)$ are given in \eqref{e1.1}. By the three closure theorem proved
in \S3.3, we will see that if $\mathcal{F}_{0}(D)$ is replaced by
$\mathcal{F}_{1}(D)$ in \eqref{e2.1}, $E(D)$
keeps the same. Similar observation applies to $H(D)$.
\\

The extension of variation functions on $D$, from smooth setting to $E(D)$ and
$H(D)$ in the Sobolev framework, permits us to include, for example, piecewise
smooth functions as variation functions (see $g$ and $\phi$ in the proof of
Theorem~\ref{T1.9}). Another obvious benefit is that $E(D)$ and $H(D)$ are
Hilbert spaces in which completeness is guaranteed. Furthermore, for a monotone
family $\mathcal{D} \equiv \{ D(t) \:;\: 0 < t \leq b \}$ of domains in $M^{n}$
(see Definition~\ref{D2.7}), we regard $f \in H(D(t)) \subset H(D(b))$,
$\forall\, t \leq b$, by letting $f \equiv 0$ outside $D(t)$. In this sense,
all the variation functions on $D(t)$ can be treated in the same ambient space
$H(D(b))$. Thus, the unstable cone $\widetilde{\Lambda} =
\widetilde{\Lambda}(D(t))$ enlarging as $t$ increases will be well-defined.
Also, a Jacobi field on $D(t)$ (see Definition~\ref{D1.3}) can be included as
an element in $H(D(r))$ for any $r > t$. In the sequel, we shall freely use
simplified notations such as $\mathcal{G} \equiv \mathcal{G}(D)$, $H \equiv
H(D)$, $H_{t} \equiv H(D(t))$, $H_{t} \subset H_{b}$, etc., just for
convenience.

\begin{defn} \label{D2.1}
A \emph{simple domain} $U$ in $\mathbb{R}^{n}$ is an open set in $\mathbb{R}^{n}$,
diffeomorphic to an $n$-ball. A \emph{simple triple} $(U,\Gamma,V)$ is a
simple domain $U$, attached with $\Gamma \subset \partial U$, given by
\[ 
  \Gamma = \{ (x,u(x)) \in \mathbb{R}^{n} \:;\: x \in V \},
\]
and an open set $V$ in a hyperplane $H^{n-1}$ of $\mathbb{R}^{n}$, where $V$ is
diffeomorphic to an $(n-1)$-ball. When $u = u(x)$ is a Lipschitz function of $x
\in V$, we say the triple a \emph{Lipschitz simple domain} (see Figure~\ref{F3}).
\end{defn}

We will give a precise definition of a generalized Lipschitz domain by gluing together some boundary points of a Lipschitz domain.

\begin{defn} \label{D2.2}
Let $D'$ be a compact $C^\infty$-manifold of dimension $n$ with $C^\infty$-boundary $\partial{D'}$, and let $\rho : D'\rightarrow M^{n}$ be a $C^\infty$-map, such that the restriction $\rho|_{Int D'}$ is a $C^\infty$-diffeomorphism, letting Int$A$ denote the interior of a set $A$ in $M^n$. We call $\rho$ a \emph{boundary gluing map} of $D'$, if $\rho$ maps at most finite number of points of $\partial{D'}$ to any given point in $M^n$. We may find a family \{$(U'_i,{\Gamma}'_i,V'_i);i=1,\cdots,\nu$\} of simple triples \emph{covering $\partial D'$ in the sense} that (i) each $U'_i$ is open in $D'$ with $U'_1\cup\cdots\cup U'_\nu \supset \partial D'$; (ii) each $\Gamma'_i$ is an open set of $\partial D'$, $\Gamma'_{i} \subset \partial U'_i$ and $\Gamma'_1 \cup \cdots,\cup \Gamma'_\nu$= $\partial D'$; (iii) each $(U'_i,\gamma'_i,V'_i)$ is a simple triple lying in $\mathbb{R}^n$(see Definition~\ref{D2.1}), through a coordinate map of $D'$. Given $D$ a domain in $M^n$, we say that $D$ is a \emph{generalized Lipschitz domain} in $M^n$, if there exists a boundary gluing map $\rho :D' \rightarrow M^n$ with $D= \rho(Int D')$ and a family of simple triples \{$(U'_i,\Gamma'_i, V'_i); i=1,\cdots,\nu$\} covering $\partial D'$, such that $(U_i,\Gamma_i,V_i)$ is a Lipschitz simple domain in $M_n$ again through a coordinate map of $M^n$, where $U_i,\Gamma_i$ and $V_i$ are the images of $U'_i, \Gamma'_i$ and $V'_i$ under the map $\rho$ respectively.

\begin{figure}[H]
\begin{minipage}[t]{0.35\textwidth}
\centering
\includegraphics{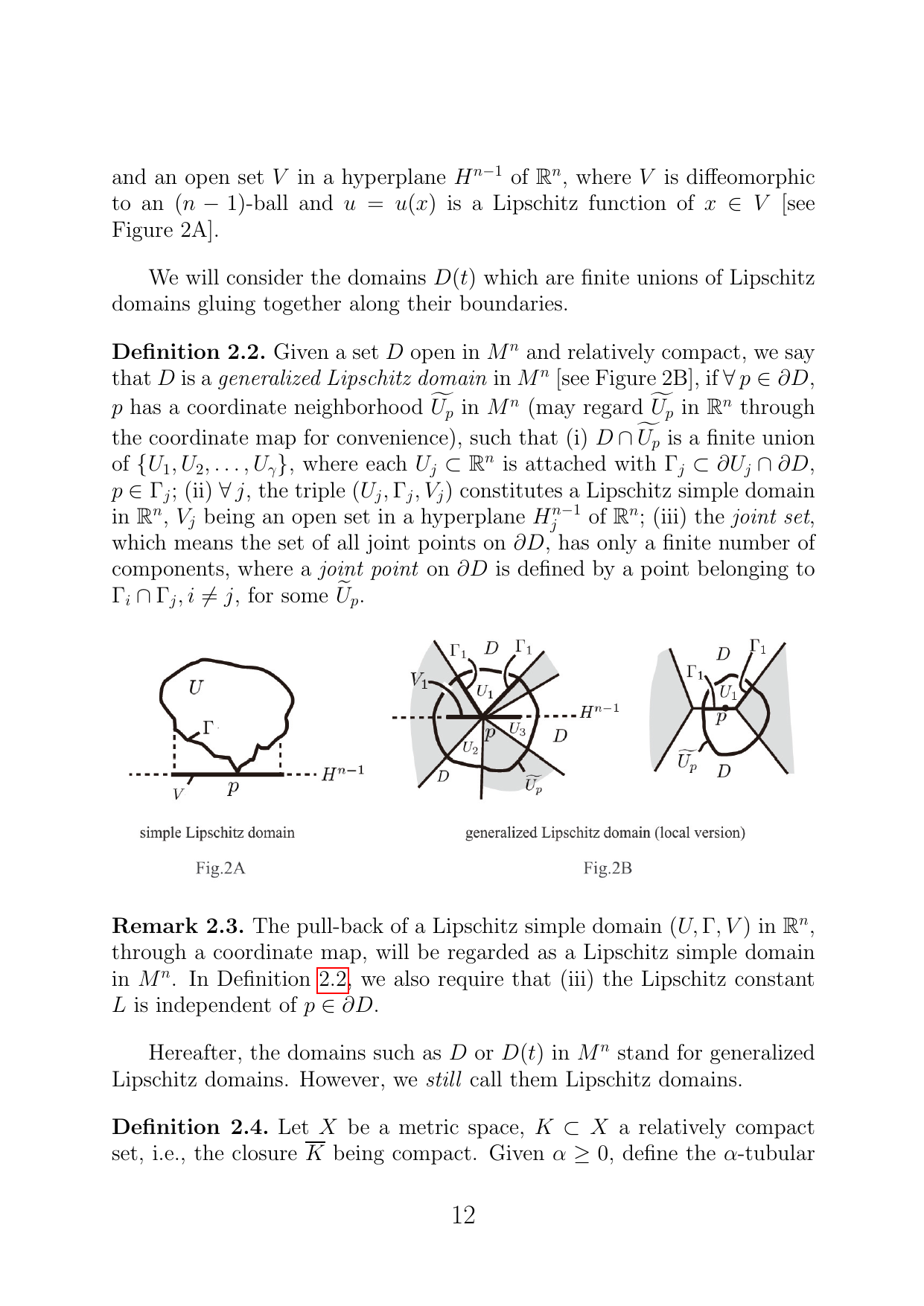}
\caption{simple Lipschitz domain} \label{F3}
\end{minipage} \quad
\begin{minipage}[t]{0.6\textwidth}
\centering
\includegraphics{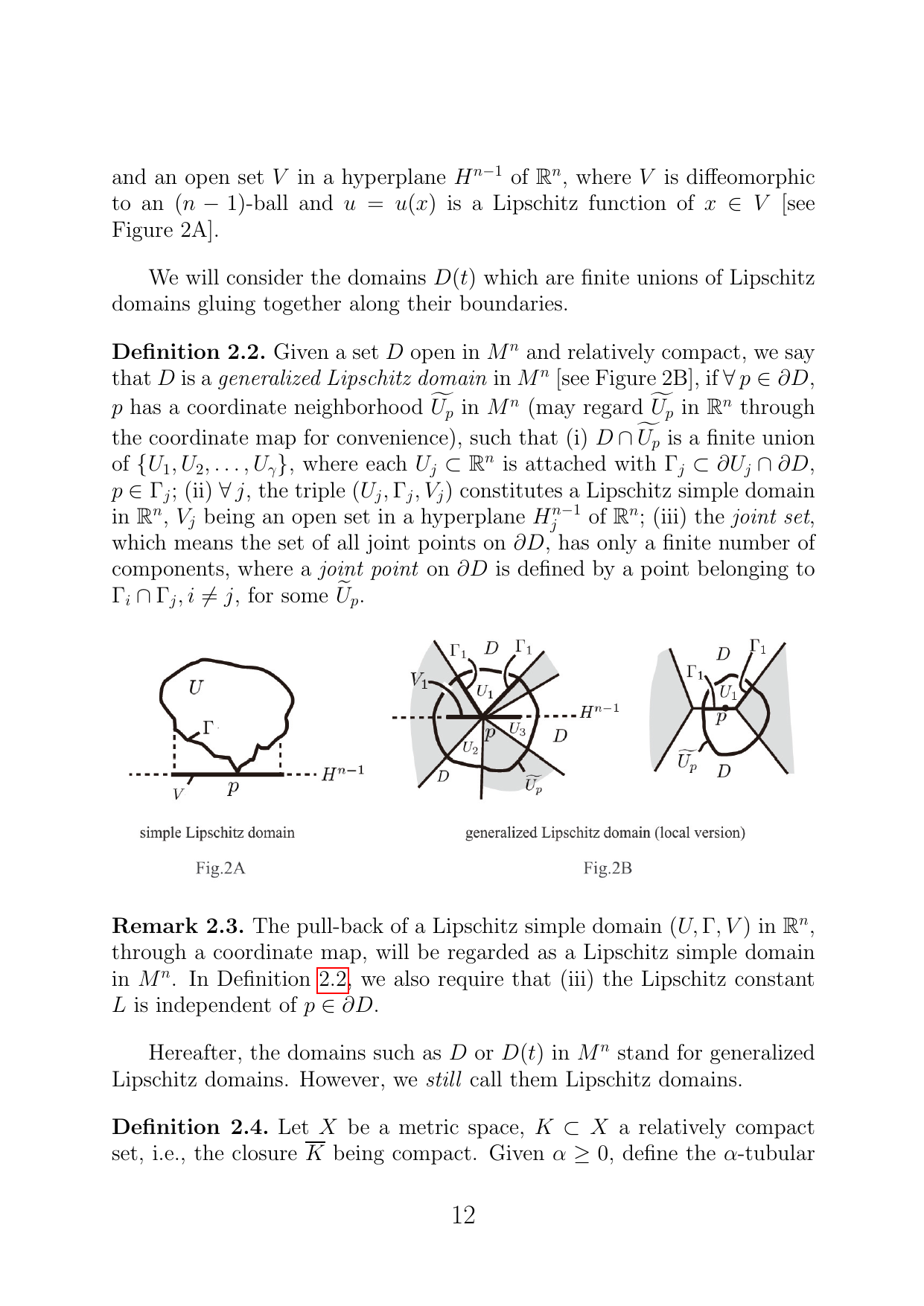}
\caption{generalized Lipschitz domain (local version)} \label{F4}
\end{minipage}
\end{figure}
\end{defn}

A point $p$ in $M^n$ is called a \emph{joint point of} $\rho$, or by abuse of language, a \emph{joint point of} $\partial D$, if the preimage $\rho^{-1}(p)$ contains more than one element of $\partial D'$, i.e., if some different points of $\partial D'$ are glued to the point $p \in \partial D$ through $\rho$. Remark that a joint point of $\partial D$ may not be in $\partial D$. (See the right diagram in Figure~\ref{F4} ). The \emph{joint set of} $\partial D$ is the totality of all joint points of $\partial D$. Correspondingly, we also call a point $p'\in \partial D'$ a \emph{joint point of} $\partial D'$, if $\rho$ is not injective at $p'$. The \emph{joint set of} $\partial D'$ means the set of all joint points of $\partial D'$.\\

Hereafter, the domains such as $D$ or $D(t)$ in $M^{n}$ stand for generalized
Lipschitz domains. Sometimes we \emph{still} call them Lipschitz domains for convenience.

\begin{defn} \label{D2.4}
Let $X$ be a metric space, $K \subset X$ a relatively compact set, i.e., the
closure $\overline{K}$ being compact. Given $\alpha \geq 0$, define the
$\alpha$-tubular neighborhood of $K$ by
\[ 
  K^{\alpha} := \{ x \in X \:;\: d(x,K) \leq \alpha \}
\]
and the Hausdorff distance $d^{H}$ between two relatively compact sets $K$ and
$L$ by
\begin{equation} \label{e2.2}
  d^{H}(K,L)
  := \inf \{ \alpha \geq 0 \:;\: K \subset L^{\alpha} \textrm{ and } L \subset
    K^{\alpha} \}.
\end{equation}
\end{defn}

\begin{defn} \label{D2.5}
Given a family $\mathcal{D} := \{ D(t) \subset M^{n} \:;\: t \in [0,b] \}$,
where each $D(t)$ is a (generalized) Lipschitz domain in $M^{n}$. We say that
$\mathcal{D}$ is a \emph{monotone family} of Lipschitz domains, if
\[ 
  s < t \;\; \Rightarrow \;\; D(s) \subsetneqq D(t),
    \quad \forall\, s,t \in [0,b].
\]
\end{defn}

\begin{defn} \label{D2.6}
Given a monotone family $\mathcal{D}$. If $\forall\, \epsilon > 0$, there
exists $\delta > 0$ such that
\begin{equation} \label{e2.3}
  |r-t| < \delta \;\; \Rightarrow \;\; d^{H}(D(r),D(t)) < \epsilon,
\end{equation}
$\forall\, r,t \in [0,b]$, we say that $\mathcal{D}$ is \emph{continuous} in
$d^{H}$, or \emph{Hausdorff continuous}.
\end{defn}

Let $\overline{D}$ denote the closure of $D$ in $M^{n}$.

\begin{defn} \label{D2.7}
Given a monotone family $\mathcal{D}$. If
\begin{equation} \label{e2.4}
  D(t) = \bigcup_{s < t} D(s), \quad
  \overline{D(t)} = \bigcap_{r > t} \overline{D(r)},
\end{equation}
and furthermore, for any $t \in [0,b]$, we have
\begin{equation} \label{e2.5}
  \partial(\overline{D(t)}) = \partial(D(t)),
\end{equation}
then we say that $\mathcal{D}$ is \emph{set-continuous} in $t$, and call such
$\mathcal{D}$ a \emph{$C^{0}$-monotone continuum} of domains in $M^{n}$.
\end{defn}

\begin{rmk} \label{R2.8}
In a $C^{0}$-monotone continuum of domains in $M^{n}$, the joint set of
$\partial{D(t)}$ defined in Definition~\ref{D2.2} contains no open set of some
$\Gamma_{j}$, such as in the right diagram of Figure~\ref{F4}, since otherwise \eqref{e2.5} would be violated. However, at an isolated joint point, the topological type of $D(t)$ can be changed along $t$. Homotopically, it is equivalent to adding a tube around the isolated joint point (see Figure~\ref{F6} and Example~\ref{E3.2} in \S\ref{S3.1}). Thus, the topological type of $D(t)$ may change in various ways.
\end{rmk}

In this paper, we consider $\mathcal{D}$ starting with a small neighborhood
$D(0)$ of a given point $p_{0} \in M^{n}$, although essentially it is not
necessary.

The set-continuity implies the Hausdorff continuity in $d^{H}$, but not vice
versa. The proof will be included later as a special case of Step~1 in the
proof of Theorem~S (see \S\ref{S3}). Examples~\ref{E3.3} and \ref{E3.4} given
in \S\ref{S3.2} compare the two notions of Hausdorff continuity and
set-continuity. Example~\ref{E3.2} in Figure~\ref{F6} of \S\ref{S3.1} also
shows a significant example of $C^{0}$-monotone continuum.\\

Let $D \subset M^{n}$ be a generalized Lipschitz domain, and $W^{k,2}(D)$
denote the Sobolev space of functions with weak derivatives up to the $k$-th
order in $L^{2}(D)$. We write
\begin{equation} \label{e2.6}
\begin{split}
  E^{k}
  &\equiv E^{k}(D)
  := \textrm{ the closure of $\mathcal{F}_{0}(D) \cap W^{k,2}(D)$ in
  $W^{k,2}(D)$},
    \\
  H^{k}
  &\equiv H^{k}(D)
  := \textrm{ the closure of $\mathcal{G}_{0}(D) \cap W^{k,2}(D)$ in
  $W^{k,2}(D)$}.
\end{split}
\end{equation}
For $k = 1$, recall that we have written $E \equiv E^{1}$, $H \equiv H^{1}$ in
\eqref{e2.1} for simplicity, as the two spaces are the ones of our main
concern. Note that $E^{0} = L^{2}(D)$, since the boundary condition
$f\big|_{\partial D} = 0$ has no restriction in $L^{2}(D)$.\\

Extend the stability operator $L$ defined by \eqref{e1.2} on $\mathcal{F}(D)$
to $E^{2}(D)$, i.e., consider $L \colon E^{2} \to E^{0}$ by
\begin{equation} \label{e2.7}
  Lf := -\Delta_{M} f - |B|^{2} f, \quad \forall\, f \in E^{2}.
\end{equation}
Similarly, extend the bilinear form $I$ of \eqref{e1.4} to $E \equiv E^{1}(D)$
by
\begin{equation} \label{e2.8}
  I(f,g) = \int_{D} Df \cdot Dg - |B|^{2} fg, \quad \forall\, f,g \in E.
\end{equation}
We see that $I$ is continuous and symmetric on $E$, satisfying
\[ 
  \langle Lf, g \rangle = I(f,g), \quad \forall\, f \in E^{2}, \; g \in E,
\]
and $\langle Lf, g \rangle = \langle f, Lg \rangle$ on $E^{2}$. Clearly, $H
\hookrightarrow H^{0}$ is a compact embedding. Given a domain $D$ in $M^{n}$,
we define $\widetilde{L} \colon H^{2} \to H^{0}$ by letting
\begin{equation} \label{e2.9}
  \widetilde{L}f
  := -\Delta_{M} f - |B|^{2} f - \frac{1}{|D|} \int_{D} Lf,
    \quad \forall\, f \in H^{2},
\end{equation}
as in \eqref{e1.6}. Introduce correspondingly a bilinear form $\widetilde{I}$
on $H$, given by
\begin{equation} \label{e2.10}
  \widetilde{I}(f,g) = \int_{D} Df \cdot Dg - |B|^{2} fg, \quad f,g \in H.
\end{equation}
Clearly, $I$ restricted to $H \subset E$ is $\widetilde{I}$, i.e.,
\[ 
  \widetilde{I}(f,g) = I(f,g), \quad \forall\, f,g \in H.
\]
Still, $\widetilde{I}$ is continuous and symmetric on $H$, satisfying
\[ 
  \langle \widetilde{L}f, g \rangle = \widetilde{I}(f,g),
    \quad \forall\, f \in H^{2}, \; g \in H
\]
and $\langle \widetilde{L}f,g \rangle = \langle f, \widetilde{L}g \rangle$,
$\forall\, f,g \in H^{2}$, i.e., $\overline{L}$ is self-adjoint on $H^2$.

\subsection{The spectral theorem} \label{S2.2}
We need to check the spectral theorem of $\widetilde{L}$ on $H$ for
completeness. Let $\{\widetilde{u}_{k}\}$ be the eigenfunctions of
$\widetilde{L}$, which constitute an orthonormal basis of $H$ with
\begin{gather}
\label{e2.11}
  \widetilde{L} \; \widetilde{u}_{k}
  = \widetilde{\lambda}_{k} \; \widetilde{u}_{k}, \quad k = 1,2,3,\ldots, \\
\notag 
  \widetilde{\lambda}_{1}
  \leq \widetilde{\lambda}_{2}
  \leq \widetilde{\lambda}_{3}
  \leq \cdots
  \to \infty.
\end{gather}
We also obtain from the proof of the spectral theorem the min-max principle
\begin{align}
\label{e2.12}
  \widetilde{\lambda}_{k}
  &= \min \{ \widetilde{I}(f,f) \:;\: f \in S_{k} \} \\
\label{e2.13}
  &= \min_{V^{k}} \: [\: \max \{ \widetilde{I}(f,f) \:;\: f \in V^{k} \cap S \}
    \:],
\end{align}
where $S$ is the unit sphere $\{ f \in H \:;\: \langle f, f \rangle = 1 \}$ in
$H$,
\begin{equation} \label{e2.14}
  S_{k}
  := \{ f \in S \:;\: \langle f, \widetilde{u}_{j} \rangle = 0,
    \,\forall\, j = 1,2,\ldots,k-1 \},
\end{equation}
and $V^{k} \subset H$ denotes $k$-dimensional linear subspaces of $H$. Remark
that some of $\widetilde{\lambda}_{k}$ may be negative. From \eqref{e2.13} and
\eqref{e2.14}, we see that
\begin{equation} \label{e2.15}
  D \subset D' \;\; \Rightarrow \;\;
  \widetilde{\lambda}_{k}(D) \geq \widetilde{\lambda}_{k}(D'),
\end{equation}
since $V^{k} \cap S$ for $D$ is a subset of that for $D'$, and the set of
maxima in \eqref{e2.13} for $D$ constitutes in the real line a subset of that
for $D'$.

By the weak equation~\eqref{e2.11}, we mean
\[ 
  \langle \widetilde{\lambda}_{k} \widetilde{u}_{k}, g \rangle
  = \langle \widetilde{L} \widetilde{u}_{k}, g \rangle
  = \int_{D} D\widetilde{u}_{k} \cdot Dg - |B|^{2} \, \widetilde{u}_{k}g
  = \widetilde{I}(\widetilde{u}_{k},g), \quad \forall\, g \in H,
\]
noting that the term $(\frac{-1}{|D|} \int_{D} L\widetilde{u}_{k}) \int_{D} g
= 0$, as $g \in H$. As an illustration, we shall show for the operator
$\widetilde{L}$ the regularity of $\widetilde{u}_{k}$, and the expansion
formula~\eqref{e2.16}.

\begin{rmk} \label{R2.9}
Any given eigenfunction $\widetilde{u}$ of $\widetilde{L}$ is smooth, i.e.,
$\widetilde{u}_{k} \in \mathcal{G}_{0}(D) \subset C^{\infty}(D) \cap
C^{0}(\overline{D})$. If $\partial D$ is smooth, then $\widetilde{u}_{k} \in
C^{\infty}(\overline{D})$.
\end{rmk}
\begin{proof}
In general, for $u \in H \subset W^{1,2}(D)$, it is known that a weak solution
of $D_{i}(a^{ij} D_{j}u) = F$ is in $W^{k+2,2}(D)$, if $a^{ij} = a^{ij}(x) \in
C^{k+1}(D)$, $F = F(x) \in W^{k,2}(D)$, $x \in D$. Specially, given
$\widetilde{u} \in H$, a weak solution of $\widetilde{L} \widetilde{u} =
\widetilde{\lambda} \widetilde{u}$, let $F(x) := |B|^{2} + c -
\widetilde{\lambda} \widetilde{u}(x)$, where $c$ is the constant $\frac{1}{|D|}
\left( \int_{D} L\widetilde{u} \right)$, then $F \in W^{1,2}(D)$, and hence
$\widetilde{u} \in W^{3,2}(D)$. Iteratively, we see that $\widetilde{u} \in
W^{l,2}(D)$ for any $l \geq 0$. By the Sobolev embedding theorem, $W^{l,p}(D)
\subset C^{m}(D)$, with any $m$ such that $0 \leq m < l - \frac{n}{p}$. Now, $p
= 2$ and $l$ is arbitrarily large, therefore $\widetilde{u} \in C^{\infty}(D)$.
However, $\widetilde{u} \in H$, it is clear that $\widetilde{u} \in
\mathcal{G}_{0}(D) \subseteq C^{\infty}(D) \cap C^{0}(\overline{D})$. The proof
of the second statement is standard.
\end{proof}

\begin{rmk} \label{R2.10}
For any $v \in H$, we have
\begin{equation} \label{e2.16}
  v = a_{1} \widetilde{u}_{1} + a_{2} \widetilde{u}_{2} + \cdots,
    \quad a_{k} = \langle v, \widetilde{u}_{k} \rangle,
\end{equation}
where the expansion~\eqref{e2.16} is in $L^{2}(D)$.
\end{rmk}
\begin{proof}
Given $v \in H$, define $v_{k} := \langle v, \widetilde{u}_{1} \rangle
\widetilde{u}_{1} + \cdots + \langle v, \widetilde{u}_{k-1} \rangle
\widetilde{u}_{k-1}$, and $w_{k} \equiv v-v_{k}$. It suffices to show that
$\|w_{k}\|^{2} \to 0$ as $k \to \infty$. Evidently, $w_{k}/\|w_{k}\| \in
S_{k}$. By \eqref{e2.12}, $\widetilde{I}(w_{k},w_{k}) \geq
\widetilde{\lambda}_{k} \|w_{k}\|^{2}$. However,
\begin{align*} 
  \widetilde{\lambda}_{k} \|w_{k}\|^{2}
  &\leq \widetilde{I}(w_{k},w_{k})
  = \widetilde{I}(v,v) - 2\widetilde{I}(w_{k},v_{k})
    - \widetilde{I}(v_{k},v_{k}) \\
  &\leq \widetilde{I}(v,v) - \widetilde{\lambda}_{1} \|v_{k}\|^{2}
  \leq \widetilde{I}(v,v) + |\widetilde{\lambda}_{1}| \|v\|^{2},
\end{align*}
in which $\widetilde{I}(w_{k},v_{k}) = 0$.  Note that $\|v_{k}\|^{2} \leq
\|v\|^{2}$. For $k \to \infty$, $\widetilde{\lambda}_{k} \to \infty$, we see
that $\|w_{k}\|^{2} \to 0$, as required.
\end{proof}

\subsection{A global Morse index theorem} \label{S2.3}
\begin{defn} \label{D2.11}
Let $X$ be a Hilbert space on which a bilinear form $I \colon X \times X \to
\mathbb{R}$ is continuous and symmetric. Define $\operatorname{Ker} I$ by $\{ x
\in X \:;\: I(x,y) = 0, \,\forall\, y \in X \}$. Let the nullity $\nu$ and the
index $i$ of $I$ be given by
\[
  \nu := \dim(\operatorname{Ker} I) \quad \textrm{and} \quad
  i := \dim W,
\]
where $W$ is a maximal linear subspace of $X$ contained in the unstable cone
\[ 
  \Lambda_{-} := \{ x \in X \:;\: I(x,x) < 0 \} \cup \{0\}.
\]
\end{defn}

\begin{thmA}[A global Morse index theorem of CMC form] \label{ThmA}
Given $M^{n}$ a CMC hypersurface immersed in $\mathbb{R}^{n+1}$ and a
$C^{0}$-monotone continuum $\mathcal{D} = \{ D(t) \:;\: t \in (0,b] \}$ of
Lipschitz domains in $M^{n}$. Denote by $H^{k}_{t} \equiv H^{k}(D(t))$ the
closure of $\mathcal{G}_{t}\equiv \mathcal{G}(D(t))$ in $W^{k,2}(D(t))$. When
$k = 1$, write $H^{1}_{t}$ by $H_{t}$. Consider the bilinear form
$\widetilde{I}$ = $\widetilde{I}_{t}$ of \eqref{e2.10} on $H_{t}$, and the
corresponding linear operator $\widetilde{L} \colon H^{2}_{t} \to H^{0}_{t}$,
given in \eqref{e2.9}. We have
\begin{equation} \label{e2.17}
  \widetilde{i}(r) = \sum_{0 \leq t < r} \widetilde{\nu}(t),
    \quad \forall\, r \in [0,b],
\end{equation}
where $\widetilde{\nu}(t)$ and $\widetilde{i}(t)$ are the corresponding nullity
and the index of $\widetilde{I}_{t}$ on $D(t)$, respectively.
\end{thmA}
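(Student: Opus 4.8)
The plan is to follow the abstract Morse index scheme of Frid--Thayer \cite{FT90} (and Smale \cite{S65}), but to replace their smoothness hypotheses with the two continuity inputs that the paper isolates, namely the Sobolev continuity of $t \mapsto H_t$ (Theorem~S) and the consequent continuity of the eigenvalues $\widetilde{\lambda}_k(t)$ of $\widetilde{L} = \widetilde{L}_t$ (Theorem~B). The key observation is that for each fixed $t$, both $\widetilde{\nu}(t)$ and the jump of $\widetilde{i}$ at $t$ are governed by the multiplicity of the eigenvalue $0$ of $\widetilde{L}_t$: by the spectral theorem of \S\ref{S2.2}, $\widetilde{\nu}(t) = \dim\operatorname{Ker}\widetilde{I}_t = \#\{k : \widetilde{\lambda}_k(t) = 0\}$, while $\widetilde{i}(t) = \#\{k : \widetilde{\lambda}_k(t) < 0\}$ since the $\widetilde{u}_k$ spanning the negative eigenspaces give a maximal subspace inside the unstable cone $\widetilde{\Lambda}_-$ (Definition~\ref{D2.11}; here $\widetilde{\Lambda}_- = \Lambda_- \cap \mathcal{G}$). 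So \eqref{e2.17} is equivalent to the statement that, as $r$ increases, $\widetilde{i}(r)$ increases by exactly the number of eigenvalues that cross zero, counted with multiplicity, and that each such crossing happens precisely at a $t$ where $\widetilde{\nu}(t) > 0$.

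The steps, in order, would be as follows. \emph{Step 1: monotonicity.} By \eqref{e2.15}, $D(t) \subset D(t')$ for $t < t'$ forces $\widetilde{\lambda}_k(t) \geq \widetilde{\lambda}_k(t')$, so each $\widetilde{\lambda}_k(\cdot)$ is non-increasing in $t$; moreover since $\mathcal{D}$ is monotone in the strict sense (Definition~\ref{D2.5}) and $D(0)$ is a tiny ball, $\widetilde{\lambda}_k(0) > 0$ for all $k$ (small domains are strictly stable), so $\widetilde{i}(0) = 0$, and the sum on the right of \eqref{e2.17} is empty at $r = 0$, matching. \emph{Step 2: continuity and the jump formula.} Fix $r$. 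By Theorem~B each $\widetilde{\lambda}_k(\cdot)$ is continuous on $[0,b]$; combined with monotonicity, $\widetilde{\lambda}_k$ is continuous and non-increasing, hence the set $\{t : \widetilde{\lambda}_k(t) < 0\}$ is an interval $(\tau_k, b]$ (or empty) whose left endpoint $\tau_k$ satisfies $\widetilde{\lambda}_k(\tau_k) = 0$. Therefore $k$ contributes $1$ to $\widetilde{i}(r)$ iff $\tau_k < r$, and contributes $1$ to $\widetilde{\nu}(\tau_k)$. Summing over $k$, $\widetilde{i}(r) = \#\{k : \tau_k < r\} = \sum_{0 \le t < r}\#\{k : \tau_k = t\} = \sum_{0 \le t < r}\widetilde{\nu}(t)$, which is exactly \eqref{e2.17}. \emph{Step 3: finiteness and well-definedness.} One must check the right-hand sum is finite: since $\widetilde{\lambda}_k(b) \to \infty$ as $k \to \infty$ (spectral theorem on $D(b)$) and $\widetilde{\lambda}_k$ is non-increasing in $t$, only finitely many $k$ can have $\widetilde{\lambda}_k(t) \le 0$ for any $t \le r$, so $\widetilde{i}(r) < \infty$ and at most finitely many $t$ have $\widetilde{\nu}(t) > 0$; also $\widetilde{i}(r)$ agrees with the index $\dim W$ of Definition~\ref{D2.11} by the standard argument that a maximal negative subspace for a continuous symmetric form on a Hilbert space with discrete spectrum accumulating only at $+\infty$ has dimension equal to the number of negative eigenvalues.

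\emph{Main obstacle.} The substantive content is entirely upstream: the genuine difficulties are Theorem~S (Sobolev continuity of $H_t$ under mere set-continuity of $\mathcal{D}$, with the topological type of $D(t)$ changing at joint points) and the deduction of eigenvalue continuity (Theorem~B) from it, both of which are invoked here as black boxes. Once those are in hand, the only delicate point left inside this proof is the crossing analysis at a $t$ where several eigenvalues simultaneously pass through $0$: one needs that continuity plus strict monotonicity of the family $\mathcal{D}$ (not just monotonicity of $t \mapsto \widetilde{\lambda}_k$) pins the crossing to a single $t = \tau_k$ with no ambiguity about left versus right limits, so that each unit of nullity at $\tau_k$ is accounted for exactly once in $\widetilde{i}(r)$ for $r > \tau_k$; the half-open convention $\sum_{0 \le t < r}$ in \eqref{e2.17} is what makes the bookkeeping consistent, and verifying it reduces to the observation that $\widetilde{\lambda}_k(r) < 0$ (strictly) for $r$ slightly past $\tau_k$, which follows from strict monotonicity of $D(\cdot)$ together with the strict inequality $\widetilde{\lambda}_k(D) > \widetilde{\lambda}_k(D')$ when $D \subsetneqq D'$ and $\widetilde{\lambda}_k(D) = 0$ (a strict domain-monotonicity of eigenvalues, provable by the unique continuation property of $\widetilde{L}$-eigenfunctions).
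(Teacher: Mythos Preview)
Your proposal is correct and follows essentially the same approach as the paper: identify $\widetilde{\nu}(t)$ with the multiplicity of the zero eigenvalue and $\widetilde{i}(t)$ with the number of negative eigenvalues via the eigenfunction expansion, then invoke Theorem~B (continuity \emph{and} strict monotonicity of $\widetilde{\lambda}_k(t)$) to count the crossings. Note only that the strict decrease you isolate in your ``Main obstacle'' paragraph is already part of the statement of Theorem~B, so you need not argue it separately from unique continuation.
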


\begin{rmk} \label{R2.12}
$u \in \operatorname{Ker} \widetilde{I}_{D(t)} \subset H_{t}$ means
$\widetilde{I}(u,\varphi) = 0$, $\forall\, \varphi \in H_{t}$, which says that
$u$ in $H_{t}$ is the weak solution of
\[ 
  \Delta_{M} \: u = -|B|^{2} u - \frac{1}{|D(t)|} \int_{D(t)} Lu.
\]
By the regularity argument as in Remark~\ref{R2.3}, $u$ is smooth, i.e., $u \in
G_{0}(D(t)) \subset C^{\infty}(D(t)) \cap C^{0}(\overline{D(t)})$ with
$\widetilde{L}u = 0$. Thus $u \in \operatorname{Ker} \widetilde{I}_{t}$ if and
only if $u$ is a Jacobi field on $D(t)$ in the classical sense. And
$\widetilde{\nu}(t)$ means the number of independent Jacobi fields on $D(t)$.
\end{rmk}

\begin{rmk} \label{R2.13}
We may regard in the Sobolev setting that every variation function $u \in
H_{t}$ on $D(t)$, as a variation function $\overline{u} \in H_{r}$ for $r > t$,
by extending $u$ to $D(r)$, letting $\overline{u} \equiv 0$ on $D(r) - D(t)$.
However, the extension $\overline{u}$ of a Jacobi field $u$ on $D(t)$ is not a
Jacobi field on $D(r)$ with $r > t$. In fact,
\[ 
  \widetilde{I}_{r}(\overline{u},\varphi)
  = \int_{D(t)} Du \cdot D\varphi - |B|^{2} u\varphi,
    \quad \forall\, \varphi \in H_{r},
\]
but the above integral is not necessarily zero, since $\varphi$ is not assumed
in $H_{t}$. Hence, $\overline{u} \notin \operatorname{Ker} I_{r}$ in general,
and therefore we can not replace the sum on the right-hand side of
\eqref{e2.17} by one single term $\widetilde{\nu}(r^{-})$ for $r^{-} < r$,
where $r^{-}$ is sufficiently near $r$.
\end{rmk}

For the proof of Theorem~A, we need the following strictness and continuity
theorem of the eigenvalues.

\begin{thmB} \label{ThmB}
Given $M^{n}$, $\mathcal{D}$ and $\widetilde{I}$ as in Theorem~\textup{A}, the
eigenvalues $\widetilde{\lambda}_{k} = \widetilde{\lambda}_{k}(t)$ of
$\widetilde{I}_{t}$ are continuous and ``strictly" decreasing in $t$, i.e.,
\[ 
  s < t \;\; \Rightarrow \;\;
  \widetilde{\lambda}_{k}(s) \gneqq \widetilde{\lambda}_{k}(t),
    \quad \forall\, s,t \in (0,b].
\]
\end{thmB}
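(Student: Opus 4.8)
The plan is to establish Theorem~B in two parts: strict monotonicity and continuity, both derived from the min-max characterization \eqref{e2.12}--\eqref{e2.13} of $\widetilde{\lambda}_k(t)$, used in concert with the Sobolev continuity of $H_t$ (Theorem~S). Throughout I regard every $H_t$ as a subspace of the ambient Hilbert space $H_b$ by extension by zero, so that the unit spheres $S(t)$ and the $k$-dimensional competitor subspaces $V^k \subset H_t$ nest as $t$ increases; the weak monotonicity $\widetilde{\lambda}_k(s) \geq \widetilde{\lambda}_k(t)$ for $s < t$ is already recorded in \eqref{e2.15}, so only the two refinements remain.

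For the \emph{strictness}, fix $s < t$ and suppose for contradiction that $\widetilde{\lambda}_k(s) = \widetilde{\lambda}_k(t) =: \mu$. Take the first $k$ eigenfunctions $\widetilde{u}_1(s),\ldots,\widetilde{u}_k(s)$ of $\widetilde{L}$ on $D(s)$ and let $V^k$ be their span, viewed (by zero extension) as a $k$-dimensional subspace of $H_t$. By \eqref{e2.13} applied on $D(t)$, $\mu = \widetilde{\lambda}_k(t) \leq \max\{\widetilde{I}_t(f,f) : f \in V^k \cap S\} = \max\{\widetilde{I}_s(f,f): f \in V^k \cap S\} = \mu$, where the middle equality uses that $\widetilde{I}_t$ and $\widetilde{I}_s$ agree on functions supported in $\overline{D(s)}$ (the correction term $\frac{1}{|D|}\int Lf$ integrates against elements of $H$ to zero, as noted below \eqref{e2.11}). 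Hence equality propagates, and the maximizing $f$ — some eigenfunction combination $\varphi$ with $\widetilde{I}_t(\varphi,\varphi) = \mu\|\varphi\|^2$ — is an eigenfunction of $\widetilde{L}$ on $D(t)$ with eigenvalue $\mu$ that vanishes identically on $D(t) \setminus \overline{D(s)}$. By the regularity in Remark~\ref{R2.9}, $\varphi$ is a genuine (smooth in the interior) solution of an elliptic equation $\widetilde{L}\varphi = 0$ shifted by a constant; a unique-continuation argument (the operator is $-\Delta_M - |B|^2 - $const, to which Aronszajn-type unique continuation applies) then forces $\varphi \equiv 0$ on the connected components of $D(t)$ that meet the nonempty open set $D(t)\setminus\overline{D(s)}$, contradicting $\|\varphi\| = 1$. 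The one delicate point here is handling the constant shift $-\frac{1}{|D(t)|}\int_{D(t)} L\varphi$: one reduces to the pure operator $L$ by observing that $L\varphi$ is itself a constant on the vanishing region, forcing that constant to be zero there and hence everywhere on that component, so the unique continuation theorem for $L$ applies verbatim. I expect this unique-continuation step, together with carefully tracking which components of $D(t)$ are ``new'', to be the main obstacle — especially because topological type may change and a component of $D(t)\setminus\overline{D(s)}$ might be a thin tube attached at a joint point.

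For the \emph{continuity}, fix $t_0 \in (0,b]$ and $k$. Upper semicontinuity from the right, $\limsup_{t \downarrow t_0}\widetilde{\lambda}_k(t) \leq \widetilde{\lambda}_k(t_0)$, is immediate from \eqref{e2.15}; combined with monotonicity it shows the right limit equals $\widetilde{\lambda}_k(t_0)$ once we know left-continuity-type control, so the real work is: given $\varepsilon > 0$, produce $\delta$ with $\widetilde{\lambda}_k(t_0) - \varepsilon < \widetilde{\lambda}_k(t)$ for $t \in (t_0-\delta, t_0)$ (lower semicontinuity as $t \uparrow t_0$) and $\widetilde{\lambda}_k(t) < \widetilde{\lambda}_k(t_0) + \varepsilon$ for $t \in (t_0, t_0+\delta)$. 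For the latter, feed the span of the first $k$ eigenfunctions of $D(t)$, $t > t_0$, into the min-max on $D(t_0)$ after approximating each by an element of $H_{t_0}$ — this is exactly where Theorem~S (Sobolev continuity: every $f \in H_t$ is a $W^{1,2}$-limit, uniformly in $t$ near $t_0$, of elements of $H_{t_0}$, and conversely) is essential, since it lets us transplant competitor subspaces across $t$ with arbitrarily small loss in the Rayleigh quotient $\widetilde{I}/\|\cdot\|^2$ (note $\widetilde{I}$ and the $L^2$-norm are both $W^{1,2}$-continuous by \eqref{e2.10}). For the former, transplant the first $k$ eigenfunctions of $D(t_0)$ into $H_t$ for $t < t_0$ — again via Theorem~S — and use them as a $k$-dimensional competitor in \eqref{e2.13} on $D(t)$. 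The only nontrivial analytic input beyond Theorem~S is that a near-orthonormal, near-eigen family stays linearly independent under small perturbation and that its $k$-dimensional span yields the min-max bound; this is a routine finite-dimensional perturbation argument (Gram matrix close to identity). Assembling the one-sided estimates with \eqref{e2.15} yields continuity at $t_0$, completing the proof.
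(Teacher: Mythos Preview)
Your continuity argument is in the right spirit and matches the paper's use of Theorem~S plus the min-max principle, though you have the directions of the one-sided estimates reversed: for $t<t_0$ monotonicity already gives $\widetilde{\lambda}_k(t)\ge\widetilde{\lambda}_k(t_0)$, so the work is the upper bound, and for $t>t_0$ the work is the lower bound. For right continuity the paper does not ``transplant'' competitors into $H_{t_0}$; instead it extracts a $W^{1,2}$-convergent subsequence of the eigenfunctions $v_j(r_i)$, $r_i\searrow t_0$, and uses Theorem~S in the form $\bigcap_{r>t_0}H_r=H_{t_0}$ to place the limit in $H_{t_0}$. Your phrasing ``every $f\in H_t$ is a $W^{1,2}$-limit, uniformly in $t$ near $t_0$, of elements of $H_{t_0}$'' is not what Theorem~S says and is not true for an arbitrary $f\in H_t$ with a single $t>t_0$.

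The real gap is in your strictness argument. From the chain
\[
\mu=\widetilde{\lambda}_k(t)\le \max_{f\in V^k\cap S}\widetilde{I}_t(f,f)=\widetilde{\lambda}_k(s)=\mu
\]
you conclude that the maximizer $\varphi\in V^k$ is an eigenfunction of $\widetilde{L}_t$ with eigenvalue $\mu$. This inference is false: having Rayleigh quotient equal to $\mu$ does \emph{not} make $\varphi$ an eigenfunction of $\widetilde{L}_t$. (Already in $\mathbb{R}^3$ with $A=\operatorname{diag}(1,2,3)$, the plane $V^2=\langle (e_1+e_3)/\sqrt{2},\,e_2\rangle$ achieves the min-max value $2$, yet $(e_1+e_3)/\sqrt{2}$ has Rayleigh quotient $2$ and is not an eigenvector.) Without $\varphi$ being an eigenfunction of $\widetilde{L}_t$ on $D(t)$, you have no elliptic equation on $D(t)$ to which unique continuation applies, and the contradiction does not follow. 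The paper repairs this by choosing $u\in\langle u_1(s),\dots,u_k(s)\rangle$ with the \emph{additional} constraint $u\perp v_1(t),\dots,v_{k-1}(t)$; then the two-sided squeeze $\langle\widetilde{L}_s u,u\rangle\le\mu\le\langle\widetilde{L}_t u,u\rangle$ together with $\langle\widetilde{L}_s u,u\rangle=\langle\widetilde{L}_t u,u\rangle$ forces all inequalities to be equalities, and \emph{that} is what pins down $u$ as a genuine eigenfunction of $\widetilde{L}_t$. You should insert exactly this orthogonality step.
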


The proof of Theorem~B is based on the Sobolev continuity (Theorem S in
\S\ref{S3.2}  ), which will be established later in \S\ref{S3}. Remark that for
the continuum of domains with Hausdorff continuity, the eigenvalues are not
necessarily continuous (see examples later in \S\ref{S3.1}, \S\ref{S3.2}).  \\

\begin{proof}[Proof of Theorem~\textup{A}]
Let Theorem~B be assumed. \emph{Step~$1$.} Given $\varphi \in H_{t}$, expand
\[ 
  \varphi = a_{1} \: \widetilde{u}_{1} + a_{2} \: \widetilde{u}_{2} + \cdots
\]
in $L^{2}(D(t))$ with an orthonormal eigenfunctions $\widetilde{u}_{k} \in
\mathcal {G}_{0}(D(t))$. Then for any $g \in H_{t}$,
\[ 
  \widetilde{I}_{t}(\varphi,g)
  = \langle \: a_{1} \: \widetilde{\lambda}_{1} \widetilde{u}_{1}
    + a_{2} \: \widetilde{\lambda}_{2} \widetilde{u}_{2} + \cdots, \; g
    \:\rangle.
\]
It is easy to see that
\begin{equation} \label{e2.18}
  \widetilde{I}_{t}(\varphi,g) = 0, \;\; \forall\, g \in H_{t},
  \quad \textrm{iff} \quad
  a_{i} = 0, \;\; \forall\, i \textrm{ with }
    \widetilde{\lambda}_{i}(t) \neq 0,
\end{equation}
by letting $g = \widetilde{u}_{i}$ for the only-if part. Or, equivalently,
$\varphi$ is a Jacobi field on $D(t)$ if and only if \eqref{e2.18} holds.
Hence, $\widetilde{\nu}(t)$ is the multiplicity of the zero eigenvalue of
$\widetilde{I}_{t}$.

\bigskip\noindent\emph{Step~$2$.} Let the eigenvalues $\widetilde{\lambda}_{j}$
on $D(t)$ be written in the form
\[ 
  \widetilde{\lambda}_{1}
  \leq \widetilde{\lambda}_{2}
  \leq \cdots
  \leq \widetilde{\lambda}_{k}
  < 0
  \leq \widetilde{\lambda}_{k+1}
  \leq \widetilde{\lambda}_{k+2}
  \leq \cdots
  \to \infty.
\]
When $D(s)$ enlarges from $0$ to $t$, $\widetilde{\lambda}_{1},
\widetilde{\lambda}_{2}, \ldots, \widetilde{\lambda}_{k}$ continuously passes
through zero, by Theorem~B, and becomes negative. We have $\sum_{0 \leq s < t}
\widetilde{\nu}(t) = k$, since as $s$ varies from $0$ to $t$, there occurs a
Jacobi field on $D(s)$, if and only if some $\widetilde{\lambda}_{j}(s) = 0$
for $j$ with $1 < j \leq k$, counting the multiplicity.

\bigskip\noindent\emph{Step~$3$.} On the other hand, given any nonzero $\varphi
= a_{1} \widetilde{u}_{1} + a_{2} \widetilde{u}_{2} + \cdots \in H_{t}$, we see
that $\varphi \in \widetilde{\Lambda}_{-}(t)$, if and only if
\[ 
  \widetilde{I}(\varphi,\varphi)
  = (\widetilde{\lambda}_{1} a^{2}_{1} + \cdots
    + \widetilde{\lambda}_{k} a^{2}_{k})
    + (\widetilde{\lambda}_{k+1} a^{2}_{k+1} + \cdots)
  < 0,
\]
where $\widetilde{\Lambda}_{-}(t)$ denotes the unstable cone in $H_{t}$, i.e.,
\[ 
  \widetilde{\Lambda}_{-}(t)
  := \{ f \in H_{t} \:;\: f = 0 \textrm{ or } \widetilde{I}(f,f) < 0 \}.
\]
Thus $\widetilde{W} :=$ the linear span $\langle \widetilde{u}_{1}, \ldots,
\widetilde{u}_{k} \rangle$ is a maximal linear subspace of
$\widetilde{\Lambda}_{-}(t)$ and
\[
  i(t) = \dim \widetilde{W} = k.
\]
This will complete the proof, if we have shown Theorem~B.
\end{proof}

\begin{rmk} \label{R2.14}
Theorems~A and B also hold for the Dirichlet case, where the stability operator
$L$ (see \eqref{e2.7} and \eqref{e2.8}) on $E_{t} \equiv E(D(t))$ is
considered. In fact, the proofs follow from the similar arguments of the more
complicated case with volume constraint, where $\widetilde{L}$ on $H_{t}$ is
considered.
\end{rmk}

\section{Sobolev continuity} \label{S3}
\subsection{Examples} \label{S3.1}
\begin{ex}[Discontinuity of $\lambda_{k}(t)$] \label{E3.1}
Consider a monotone family $\mathcal{D}_{2} = \{ D(t) \subset \mathbb{R}^{1}
\:;\: t \in (0,1] \}$ defined by $D(t) = D_{-}(t) \cup D_{+}(t)$ for $0 < t <
1$, where
\[ 
  D_{-}(t) \equiv (-\pi,-\pi(1-t)), \quad
  D_{+}(t) \equiv (\pi(1-t),\pi),
\]
and $D(1) = (-\pi,\pi)$. We have $|B|^{2} = 0$ and $Lf = -\Delta_{M} f -
|B|^{2} f = -f''$. Given $t \in (0,1)$, the first eigenfunction $u_{1}(x)$ on
$D(t)$ is given by $\sin[\:(x - \pi(t-1))/t\:]$ on $D_{-}(t)$; and $\sin[\:(x +
\pi(t-1))/t\:]$ on $D_{+}(t)$. Clearly, $\lambda_{1}(t) = 1/t^{2}$ for $t \in
(0,1)$. However, at $t = 1$, $u_{1}(x) = \cos(x/2)$ with $\lambda_{1}(1) =
1/4$. Thus
\[ 
  \lim_{t \to 1^{-}} \lambda_{1}(t) = 1 \neq 1/4 = \lambda_{1}(1),
\]
and $\lambda_{1}(t)$ is not continuous at $t = 1$. Remark that the monotone
family $\mathcal{D}_{2}$ is continuous in the Hausdorff distance $d_{H}$ (see
\eqref{e2.2}), but not set-continuous (see \eqref{e2.4}). By
Definition~\ref{D2.7}, it is not a $C^{0}$-monotone continuum of domains in
$\mathbb{R}^{1}$.
\end{ex}

In general, a monotone family $\mathcal{D}$ satisfying the set-continuity is
continuous in the Hausdorff distance $d_{H}$. This will be seen in the proof of
Theorem~S. But the converse is not true (see Example~\ref{E3.3} later in
\S\ref{S3.2}).

\begin{ex} \label{E3.2}
Let $M^{2} = \{ (x,e^{iy}) \:;\: x,y \in \mathbb{R}^{1} \} \subset \mathbb{R}
\times S^{1}$ be a cylinder in $\mathbb{R}^{3}$. Let $\mathcal{D}_{3} = \{ D(t)
\:;\: t \in [a,\infty) \}$, where $a$ is a small number, $p_{0} = (0,1) \in
M^{2}$, $D(a) =$ a small neighborhood of $p_{0}$, and $D(t) =$ a geodesic disk
centered at $p_{0}$ with radius $t$ for $t \in (a,\pi)$. When $t = \pi$,
$D(\pi)$ is the geodesic disk with its boundary $\partial D(\pi)$ gluing
together the two antipodal points at $-p_{0}$. After $t > \pi$, $D(t)$ is the
geodesic disk, which overlaps itself  in a larger area (see Figure~\ref{F6}).
\begin{figure}[H]
\begin{minipage}[t]{0.45\textwidth}
\centering
\includegraphics{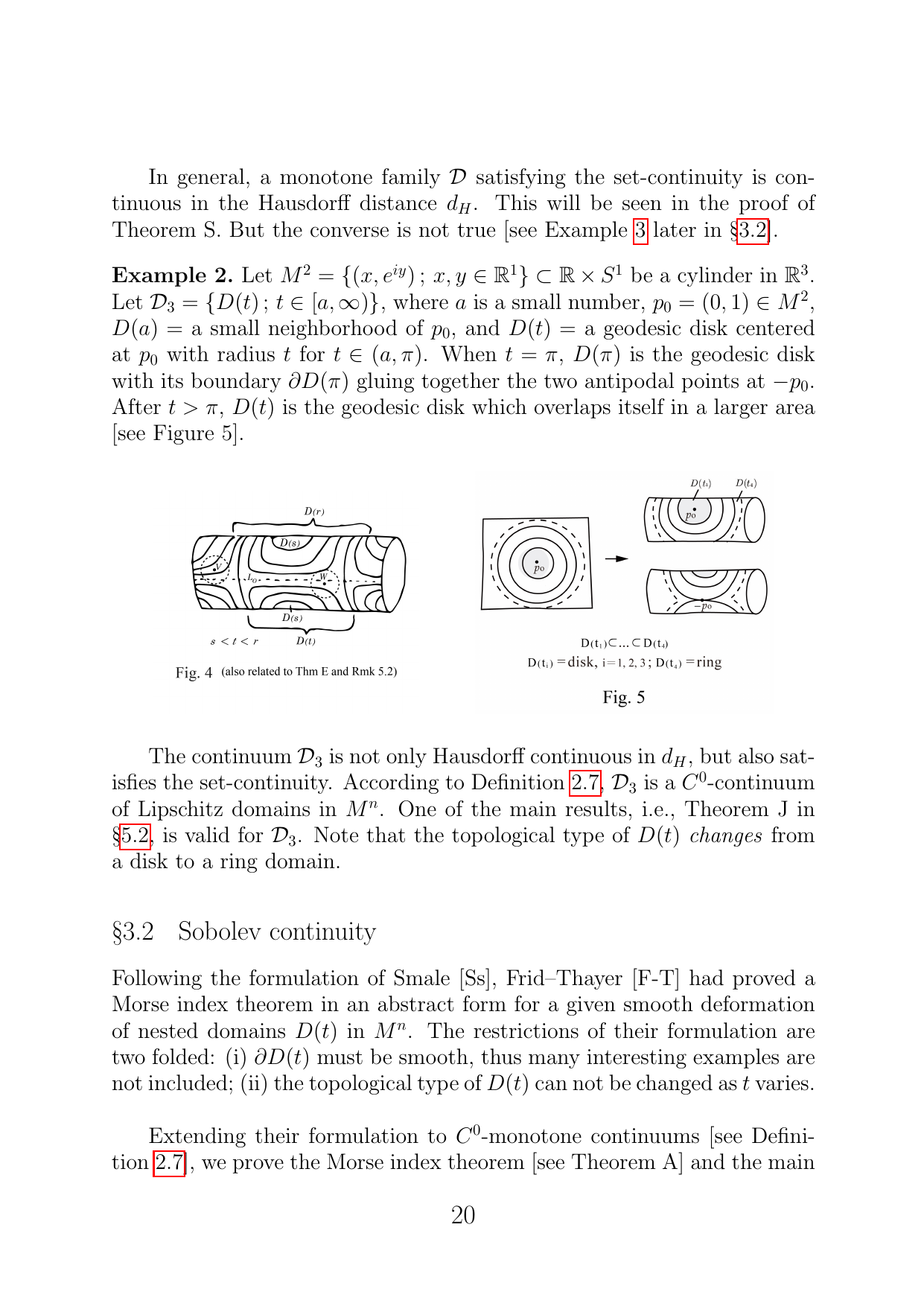}
\caption{(also related to Theorem~S and Remark~\ref{R4.1})} \label{F5}
\end{minipage} \quad
\begin{minipage}[t]{0.5\textwidth}
\centering
\includegraphics{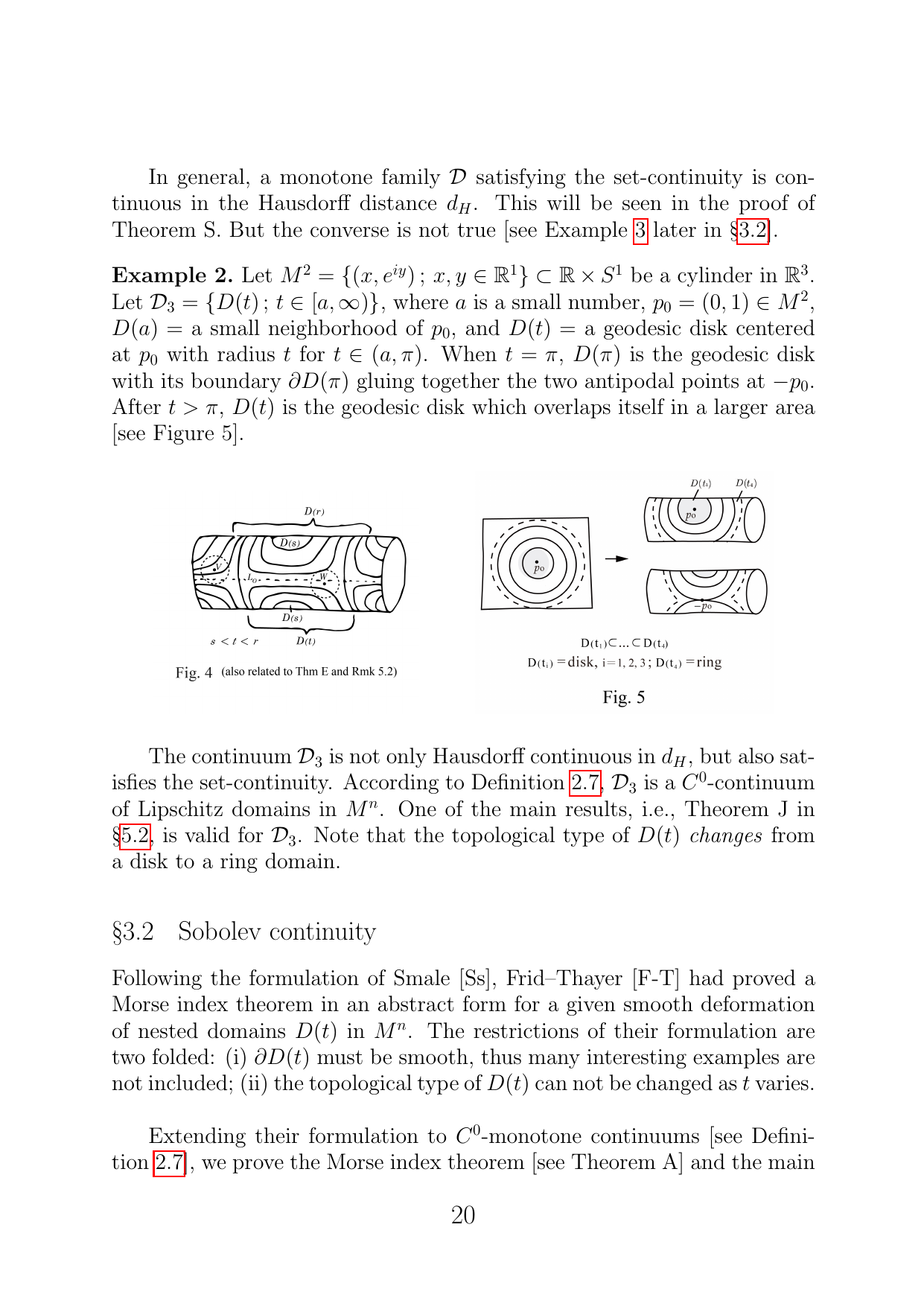}
\caption{$D(t_{1}) \subset \cdots \subset D(t_{4})$, $D(t_{1}) =$ disk, $i =
1,2,3$; $D(t_{4}) =$ ring} \label{F6}
\end{minipage}
\end{figure}
\end{ex}

The continuum $\mathcal{D}_{3}$ is not only Hausdorff continuous in $d_{H}$,
but also satisfies the set-continuity. According to Definition~\ref{D2.7},
$\mathcal{D}_{3}$ is a $C^0$-continuum of Lipschitz domains in $M^{n}$. One of
the main results, i.e., Theorem~J in \S\ref{S5.2}, is  valid for
$\mathcal{D}_{3}$. Note that the topological type of $D(t)$ \emph{changes} from
a disk to a ring domain.

\subsection{Sobolev continuity} \label{S3.2}
Following the formulation of Smale \cite{S65}, Frid--Thayer \cite{FT90} had
proved a Morse index theorem in an abstract form for a given smooth deformation
of nested domains $D(t)$ in $M^{n}$. The restrictions of their formulation are
two folded: (i)~$\partial D(t)$ must be smooth, thus many interesting examples
are not included; (ii)~the topological type of $D(t)$ can not be changed as $t$
varies.\\

We prove the Morse index theorem (see Theorem~A) and its application to Jacobi
fields (see Theorem~J in \S\ref{S5.2}) for the wider class of continuums, i.e.,
$C^{0}$-monotone continuums (see Definition~\ref{D2.7}), in which the two
restrictions will be waived. More precisely, we only require each $\partial
D(t)$ as a boundary of a generalized Lipschitz domain. And we allow the
topological type of $D(t)$ to change as $t$ varies.\\

\begin{wrapfigure}{r}{0.3\textwidth}
\centering
\includegraphics[width=0.25\textwidth]{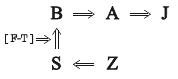}
\caption{logical chart of the main theorems} \label{F7}
\end{wrapfigure}
In order to show Theorem~B for $C^{0}$-monotone continuums. We first prove the
Sobolev continuity \eqref{e3.1}, stated in Theorem~S (see below). Then use the
Sobolev continuity to show the continuity of eigenvalues and the strictness,
i.e., Theorem~B.  These will be presented in \S\ref{S3} and \S\ref{S4}. In
\S\ref{S5} we follow the chart in Figure~\ref{F7} to establish a distribution
theorem of Jacobi fields, i.e., Theorem~J. We remind that Theorem~S is the core
and difficult result of the paper. However, we need to establish the three
closure theorem (see Theorem~Z) to prove Theorem~S. By the logical chart in
Figure~\ref{F7}, we mean for example Theorem~S is based on Theorem~Z, Theorem~J
based on Theorem~A, and so on.

\begin{thmS}[Sobolev continuity] \label{ThmS}
For a $C^{0}$-monotone continuum $\mathcal{D} = \{ D(t); t \in [0,b] \}$ of
generalized Lipschitz domains $D(t)$ in $M^{n}$, we have
\begin{equation} \label{e3.1}
  \overline{\bigcup_{s < t} H_{s}} = H_{t} = \bigcap_{r > t} H_{r},
\end{equation}
where $H_{t} \equiv H(D(t))$, $\forall\, t \in [0,b]$. The formula~\eqref{e3.1}
is called Sobolev continuity.
\end{thmS}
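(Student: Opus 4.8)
The plan is to split the two equalities in \eqref{e3.1} into four inclusions, set aside the two soft ones, and reduce the remaining two to a single cut-off construction together with a corner analysis. \textbf{Step~1: soft inclusions and Hausdorff continuity.} For $s<t<r$, the monotonicity $D(s)\subsetneqq D(t)\subsetneqq D(r)$ and extension by zero give $H_s\subseteq H_t\subseteq H_r$ inside $H_b$: the zero boundary value, the constraint $\int f=0$ and membership in $W^{1,2}$ all survive extension by zero, and by the three closure theorem the defining class $\mathcal{G}_0$ in \eqref{e2.1} may be taken to be $\mathcal{G}_1=\mathcal{G}(D)\subset C^1(\overline D)$. Hence $\overline{\bigcup_{s<t}H_s}\subseteq H_t\subseteq\bigcap_{r>t}H_r$ automatically. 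In the same step I would record that set-continuity \eqref{e2.4}--\eqref{e2.5} forces Hausdorff continuity \eqref{e2.3}: if $d(x_{r_k},D(t))\ge\epsilon$ for some $r_k\downarrow t$, any subsequential limit of $x_{r_k}$ lies in $\bigcap_{r>t}\overline{D(r)}=\overline{D(t)}$, a contradiction; symmetrically for $r_k\uparrow t$, using $D(t)=\bigcup_{s<t}D(s)$ and the compactness of $\overline{D(t)}$ (note $\overline{D(t)}\subseteq\rho(D')$ with $D'$ compact). The resulting comparison of the simple-triple charts of $D(s),D(t),D(r)$ for nearby parameters is needed below.

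\textbf{The inner inclusion $H_t\subseteq\overline{\bigcup_{s<t}H_s}$.} It suffices to approximate a fixed $f\in\mathcal{G}(D(t))\subset C^1(\overline{D(t)})$, a dense class in $H_t$ by the three closure theorem. By \eqref{e2.4} the open sets $D(s)$, $s<t$, exhaust $D(t)$. Pick $\delta_k\downarrow0$ and cut-offs $\eta_k$ on $D(t)$ equal to $1$ on $\{\mathrm{dist}(\cdot,\partial D(t))\ge\delta_k\}$, vanishing on $\{\mathrm{dist}(\cdot,\partial D(t))<\delta_k/2\}$, with $|\nabla\eta_k|\lesssim1/\delta_k$. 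Then $\mathrm{supp}\,\eta_k$ is compact in $D(t)$, hence contained in some $D(s_k)$ with $s_k\uparrow t$, and $\eta_k f\in\mathcal{F}_0(D(s_k))$. Since $f$ is Lipschitz and vanishes on $\partial D(t)$ one has $|f|\lesssim\mathrm{dist}(\cdot,\partial D(t))$, so $|f\nabla\eta_k|\lesssim1$ on the collar $\{\delta_k/2\le\mathrm{dist}(\cdot,\partial D(t))\le\delta_k\}$, whose measure tends to $0$; with dominated convergence this yields $\eta_k f\to f$ in $W^{1,2}(D(t))$. Fixing $\chi\in C_c^\infty(D(s_1))$ with $\int_{D(t)}\chi=1$ and setting $f_k:=\eta_k f-(\int_{D(t)}\eta_k f)\chi\in\mathcal{G}_0(D(s_k))\subseteq\bigcup_{s<t}H_s$, one gets $f_k\to f$ because $\int_{D(t)}\eta_k f\to\int_{D(t)}f=0$. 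Hence $f\in\overline{\bigcup_{s<t}H_s}$.

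\textbf{The outer inclusion $\bigcap_{r>t}H_r\subseteq H_t$.} Let $f\in H_r$ for every $r>t$. Then $f=0$ a.e.\ off $\overline{D(r)}$, so $\mathrm{supp}\,f\subseteq\bigcap_{r>t}\overline{D(r)}=\overline{D(t)}$; thus $f|_{D(t)}\in W^{1,2}(D(t))$ and $\int_{D(t)}f=\int_{D(r)}f=0$. I then claim $f|_{D(t)}$ has zero trace on $\partial D(t)$: in each simple-triple chart $(U_i,\Gamma_i,V_i)$, $f$ is $W^{1,2}$ on $U_i$ and vanishes on the side of the Lipschitz graph $\Gamma_i$ lying outside $\overline{D(t)}$, so its two one-sided traces on $\Gamma_i$ agree a.e.\ and the interior one is therefore $0$; here the relation $\partial(\overline{D(t)})=\partial(D(t))$ of \eqref{e2.5} is essential, as it excludes the lower-dimensional ``flaps'' of $\overline{D(t)}$ ruled out in Remark~\ref{R2.8} which would spoil this chart picture. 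With zero trace and zero mean in hand, the same cut-off-and-mean-correction device as in the preceding paragraph (now with $s=t$), i.e.\ the three closure theorem, places $f|_{D(t)}$ in the $W^{1,2}$-closure of $\mathcal{G}_0(D(t))$; hence $f\in H_t$.

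\textbf{Joint points: the main obstacle.} The two inclusions above are rigorous precisely where $\partial D(t)$ is a genuine Lipschitz hypersurface. At the finitely many isolated joint points of $\partial D(t)$ in the sense of Definition~\ref{D2.2}, the domain pinches, $\rho$ fails to be injective, the comparison $|f|\lesssim\mathrm{dist}(\cdot,\partial D(t))$ degenerates, and the ``vanishing outer trace forces vanishing interior trace'' principle must be recast as a statement about the infinitesimal order at which a boundary-vanishing variation function decays at such a corner. Supplying these estimates uniformly over the joint set is exactly the role of the three closure theorem (Theorem~Z): it identifies, for generalized Lipschitz domains, the $W^{1,2}$-closures of $\mathcal{F}_0$, of $\mathcal{F}_1$ and of the compactly supported class, and this is what legitimizes both the cut-off construction in the inner inclusion and the trace identification in the outer inclusion near joint points. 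I expect this corner analysis — the paper's technically sophisticated Step~3 — to be the hard part; everything else is either soft or a mild adaptation of the classical $W^{1,2}_0$-density theory for Lipschitz domains.
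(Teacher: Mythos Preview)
Your overall plan is sound and matches the paper's structure: two soft inclusions, two hard ones, with Theorem~Z as the workhorse. The inner inclusion $H_t\subseteq\overline{\bigcup_{s<t}H_s}$ is handled essentially as the paper does it (Step~6 there): once Theorem~Z produces compactly supported approximants $g\in\mathcal G_c(D(t))$, the set-continuity $D(t)=\bigcup_{s<t}D(s)$ places $\overline{\operatorname{supp}g}$ inside some $D(s')$. Your explicit distance-based cut-off is a legitimate variant of this.

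Where you genuinely diverge from the paper is in the \emph{outer} inclusion $\bigcap_{r>t}H_r\subseteq H_t$. The paper (Steps~1--3) picks $f_n\in\mathcal G_c(D(r_n))$ with $r_n\searrow t$, proves $\int_{\Gamma_p}f_n^2\to0$ by an integral estimate (4.2), and then re-runs the Lemma~A/B machinery with the modified bound (4.3) to push $f_n$ down into $\mathcal G_c(D(t))$. Your route is more conceptual: extend $f\in H_b$ by zero to get $\tilde f\in W^{1,2}_{\mathrm{loc}}(M^n)$, note $\tilde f=0$ a.e.\ off $\overline{D(t)}$, and read off that the one-sided trace on each $\Gamma_i$ vanishes because the two one-sided traces of a $W^{1,2}$ function across a Lipschitz graph agree. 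This is cleaner and avoids the sequence $f_n$ and the modified estimate (4.3) altogether; it does, however, lean on the equivalence ``zero trace on $\partial D(t)$ $\Leftrightarrow$ membership in $W^{1,2}_0(D(t))$'' for \emph{generalized} Lipschitz domains, which is exactly what Theorem~Z supplies. So both approaches bottom out in Theorem~Z, but yours invokes it once at the end rather than re-deriving its estimates.

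Two concrete gaps to close. First, your trace argument tacitly assumes there is an ``outside'' of $\Gamma_i$ inside $D(b)$, but part of $\partial D(t)$ may sit on $\partial D(b)$ --- the paper isolates this as the set $\Lambda'$ and handles it separately (Step~3(iii)): there $D(r)=D(t)$ locally for all $r>t$, so the zero trace comes directly from $f\in H_b$, not from the two-sided argument. You need to make this split explicit. Second, your phrase ``finitely many isolated joint points'' misreads Definition~\ref{D2.2}: only the fibres of $\rho$ are finite, and the joint set is merely nowhere-open in each $\Gamma_i$ (Remark~\ref{R2.8}). For your trace argument this is actually harmless --- the joint set has $(n{-}1)$-measure zero in $\partial D(t)$, and traces are a.e.\ objects --- so the ``main obstacle'' you anticipate is less severe for your approach than for the paper's; you should say this rather than defer it.
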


\begin{defn} \label{D3.1}
A monotone family $\mathcal{D}$ of domains $D(t)$ in $M^{n}$ is called a
\emph{smooth monotone continuum}, if there is a $C^{\infty}$-map $\alpha \colon
\overline{D} \times [0,b] \to M^{n}$, such that for any given $t \in [0,b]$,
$D(t) = \alpha(D \times \{t\})$, $\alpha \big|_{D \times \{t\}}$ is a
diffeomorphism with $\partial D(t)$ a $C^{\infty}$-submanifold of $M^{n}$.
\end{defn}

\begin{rmk} \label{R3.2}
Smooth monotone continuums are the subject that Frid--Thayer treated. For that
case, the claim~\eqref{e3.1} can be shown directly by using the
$C^{\infty}$-map $\alpha$ to shrink $D(r)$. For example, the second equality of
\eqref{e3.1} is proved like this: Given $g \in \bigcap_{r > t} H_{r}$, for each
$r > t$,  $\exists\ f \in \mathcal{G}_{0}(D(r))$ very close to $g$ in $H_{r}
\subset H_{b}$. Shrinking $D(r)$ through $\alpha$ to $D(t)$, we modify $f \in
\mathcal{G}_{0}(D(r))$ to obtain $h \in \mathcal{G}_{0}(D(t))$. Namely, define
$\beta \colon D(r) \xrightarrow{\approx} D(t)$ by $\beta(y) = \alpha_{t}(\; p
\circ \alpha^{-1}_{r}(y),t) \in D(t)$ for each $y \in D(r)$, where $\alpha_{r}
\equiv \alpha \big|_{D \times r}$ and $p$ is the projection of $\overline{D}
\times [0,b]$ onto $\overline{D}$. Define $h(x)=f\circ \beta^{-1}(x)$ for each
$x \in D(t)$, and $h(x) \equiv 0$ on $D(r)-D(t)$.  By choosing $r$ very close
to $t$, $h$ is very close to $f$ in $H_{r} \subset H_{b}$. Thus $g$ is in the
closure of $\mathcal{G}_{0}(D(t))$ in $H_{b}$, and therefore $g \in H_{t}$.
Applying this kind of shrinking arguments, the rest of the proof is easily
obtained.\\

However, for $C^{0}$-monotone continuums, the story is entirely different,
since $\alpha$ is not available. The notion of set-continuity that we
introduced is now significant to establish the Sobolev continuity.
\end{rmk}

\begin{ex} \label{E3.3}
Consider $M^{2} \equiv \mathbb{R} \times (-\pi,\pi)$ and a monotone family
$\mathcal{D}_{4} \equiv \{ D(t) \subset M^{2} \:;\: 0 < t \leq b \}$ (see
Figure~\ref{F8}), given by: (i)~$D(1) = (-1,1) \times (-\pi,\pi)$; (ii)~For $s
< 1$, $D(s)$ is a union of two components of half-disk shape, so that each
approaches to rectangles $(-1,1) \times (-\pi,0)$ and $(-1,1) \times (0,\pi)$
as $s \to 1$; (iii)~For $r > 1$, $D(r)$ is the rectangle $(-r,r) \times
(-\pi,\pi)$. Clearly, $\mathcal{D}_{4}$ is continuous in Hausdorff distance
$d_{H}$, while not satisfying set-continuity at $t = 1$. In fact, $\bigcup_{s <
1} D(s) \subsetneqq D(1)$, the condition~\eqref{e2.4} is not satisfied,
although \eqref{e2.5} is valid. If we define $\mathcal{D}'_4$ the same as
$\mathcal{D}_4$, except letting $D(1) = ((-1,1) \times (-\pi,\pi)) - L_{0} =
\bigcup_{s < 1} D(s)$ instead, where $L_{0} = (-1,1) \times \{0\}$, then
\eqref{e2.4} is satisfied for $\mathcal{D}'_4$. However, $\partial(D(1))
\subsetneqq \partial(\overline{D(1)})$, \eqref{e2.5} is not true. Hence, both
$\mathcal{D}_4$ and $\mathcal{D}'_4$ are not $C^{0}$-monotone continuums.
\end{ex}

\begin{wrapfigure}{r}{0.5\textwidth}
\centering
\includegraphics[width=0.4\textwidth]{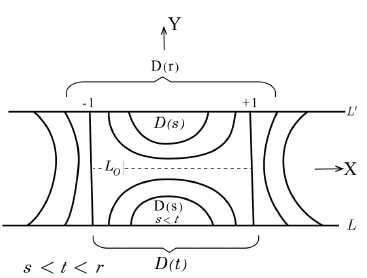}
\caption{$M^{2} = \mathbb{R} \times (-\pi,\pi)$} \label{F8}
\end{wrapfigure}
Intuitively, $\overline{\bigcup_{s < 1} H_{s}} \subsetneqq H_{1}$ for
$\mathcal{D}_{4}$ in Example~\ref{E3.3}, since a Sobolev function $g$ in
$H_{1}$ may not vanish around the middle line $L_{0} \equiv (-1,1) \times 0$,
while the trace of each $f$ in $H_{s}$ on $\partial D(s)$ is zero, $\forall\, s
< 1$. If we project $\mathcal{D}_{4}$ along $X$-axis to the interval
$(-\pi,\pi)$, $\mathcal{D}_{4}$ becomes $\mathcal{D}_{2}$ of
Example~\ref{E3.1}, of which the eigenvalues of $L$ is discontinuous at $t =
1$. Thus, Theorem~B is not expected to be valid for $\mathcal{D}_{4}$ as well.

\begin{ex} \label{E3.4}
One may reconstruct $\mathcal{D}_{4}$ and $\mathcal{D}'_4$ by identifying the
two lines $L := \mathbb{R} \times (-\pi)$ and $L' := \mathbb{R} \times \pi$, to
obtain monotone families $\mathcal{D}_{5}$ and $\mathcal{D}'_5$ respectively,
on the cylinder $M^{2} \equiv \mathbb{R} \times S^{1} = \{ (x,,e^{iy}) \:;\:
x,y \in \mathbb{R}^{1} \}$. Both $\mathcal{D}_{3}$ and $\mathcal{D}_{5}$ are
two “similar” monotone families on the cylinder $\mathbb{R} \times S^{1}$
which change topological type of $D(t)$ from disk to cylinder. However,
$\mathcal{D}_{3}$ is a $C^{0}$-monotone continuum, while $\mathcal{D}_{5}$ is
not. Notice that Theorem~B applies to $\mathcal{D}_{3}$, but not to
$\mathcal{D}_{5}$, nor to $\mathcal{D}'_5$, $\mathcal{D}_4$ and
$\mathcal{D}'_4$.
\end{ex}

\subsection{Three closure theorem} \label{S3.3}
We need the following three closure theorem to prove the Sobolev continuity
later in \S\ref{S4}.  Given $D$ a generalized Lipschitz domain in $M^{n}$ (see
Definition~\ref{D2.2}), satisfying the set-continuity (see Definition~\ref{D2.7}), i.e., $\partial\overline{D}=\partial{D}$. Clearly, the joint set of $D$ (if
any) contains no open set of some $\Gamma_{i}$ in a triple $(U_{i},\Gamma_{i},V_{i})$ of any $\Tilde{U_p}$ (see Definition~\ref{D2.2}). Let
\begin{align*}
  \mathcal{F}_{m}(D)
  &:= \{ f \in C^{\infty}(D) \cap C^{m}(\overline{D}) \:;\: f|_{\partial D} = 0
    \}, \\
  \mathcal{F}_{c}(D)
  &:= \{ f \in C^{\infty}(D) \:;\: \overline{\operatorname{supp} f} \subset D
    \},
\end{align*}
        where $m$= 0 or 1. Then $\mathcal{F}_{0}(D) \supset \mathcal{F}_{1}(D)
        \supset \mathcal{F}_{c}(D)$. Although $\mathcal{F}_{0}(D)\subsetneqq
        W^{1,2}(D)$ (for example letting $f\in \mathcal{F}_{0}(0,1)$ with
        $f(x)=\sqrt{x}$ on $[0,\delta),\; 0<\delta< 1$), it is clear that
        $\mathcal{F}_{1}(D)$ and $\mathcal{F}_{c}(D)$ are contained in
        $W^{1,2}(D)$. If we consider the three subspaces
        $\mathcal{F}_{0}(D)\cap W^{1,2}(D), \mathcal{F}_{1}(D)$ and
        $\mathcal{F}_{c}(D)$ of $W^{1,2}(D)$,
it is interesting to observe that their closures in $W^{1,2}(D)$ are the
same.\\

For the case of volume constraint, the parallel result is also true. It seems
that the three closure theorem must be known in literature, but we find no
handy reference. However, we will give here an intuitive direct proof, in which
some interesting ideas are presented. Not only the result of the three closure
theorem is repeatedly used in the sequel, some crucial estimates of its proof
will also be modified to show the Sobolev continuity.
\\

\begin{thmZ}[Three closure theorem] \label{ThmZ}
    Given $D$ a generalized Lipschitz domain in a Riemannian manifold $M^{n}$,
    let $D$ satisfies the set-continuity, i.e., $\partial \overline{D}
    =\partial D$. Then the closures of $\mathcal{F}_{0}(D)\cap W^{1,2}(D),
    \mathcal{F}_{1}(D)$ and $\mathcal{F}_{c}(D)$ in $W^{1,2}(D)$ are the same
    space $E(D)$. Namely,
\begin{align}
  E(D) &= W_{0}^{1,2}(D), \notag \\
\label{e3.2}
  H(D) &= \textrm{the closure of $\mathcal{G}_{c}(D)$ in $W^{1,2}(D)$},
\end{align}
where $E(D)$ and $H(D)$ are given in \eqref{e2.1}, $W_{0}^{1,2}(D)$ denotes the
closure of $\mathcal{F}_{c}(D)$ in $W^{1,2}(D)$ and
\[
  \mathcal{G}_{c}(D)
  := \big\{ g \in C^{\infty}(D) \:;\: \overline{\operatorname{supp} g}
    \subset D, \; \int_{D}g = 0, \big\}.
\]
\end{thmZ}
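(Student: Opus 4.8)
The statement bundles several equalities; the core of the matter is a single inclusion, namely that any $f\in\mathcal F_1(D)$ (equivalently $\mathcal F_0(D)\cap W^{1,2}(D)$, by the trivial inclusions $\mathcal F_0\supset\mathcal F_1\supset\mathcal F_c$) lies in the $W^{1,2}$-closure of $\mathcal F_c(D)$, i.e.\ can be approximated in $W^{1,2}(D)$ by smooth functions with compact support in $D$.  The reverse inclusions are immediate, so once this one approximation lemma is in hand the first line $E(D)=W^{1,2}_0(D)$ follows, and the volume-constrained version \eqref{e3.2} follows by the standard device of subtracting a small correction: given $g\in\mathcal G_1(D)$, approximate it as a plain Lipschitz-domain function by $g_k\in\mathcal F_c(D)$ in $W^{1,2}$, then replace $g_k$ by $g_k-\big(\int_D g_k\big)\chi/\int_D\chi$ for a fixed bump $\chi\in\mathcal F_c(D)$ with $\int_D\chi=1$; since $\int_D g_k\to\int_D g=0$, this correction tends to $0$ in $W^{1,2}$ and the corrected functions lie in $\mathcal G_c(D)$.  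So I would state and prove the approximation lemma first, then dispatch the two corollaries in a few lines each.

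**The approximation lemma and its proof strategy.**  Fix $f\in\mathcal F_1(D)$, so $f\in C^1(\overline D)$ with $f|_{\partial D}=0$.  Using the finite cover $\{(U_i,\Gamma_i,V_i)\}$ of $\partial D$ by Lipschitz simple triples together with a cutoff supported well inside $D$, reduce by a partition of unity to the local model: $D\cap U$ is the subgraph region $\{(x,y): x\in V,\ 0<y<\text{(stuff)},\ y>u(x)\}$ or the analogous region above a Lipschitz graph $y=u(x)$, $f\in C^1$ up to $\{y=u(x)\}$ and vanishing there.  Define $f_\varepsilon(x,y):=\eta\!\big((y-u(x))/\varepsilon\big)\,f(x,y)$ where $\eta$ is a fixed smooth function that is $0$ on $(-\infty,1]$ and $1$ on $[2,\infty)$; this pushes the support off the Lipschitz boundary piece.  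One checks $f_\varepsilon\to f$ in $W^{1,2}$: the $L^2$ convergence is dominated convergence; for the gradient, $\nabla f_\varepsilon=\eta(\cdot)\nabla f+\varepsilon^{-1}\eta'(\cdot)f\,\nabla\big(y-u(x)\big)$, and the delicate term is $\varepsilon^{-1}\eta'\big((y-u)/\varepsilon\big)\,f$, supported in the slab $\varepsilon<y-u(x)<2\varepsilon$; there $|f|=|f(x,y)-f(x,u(x))|\le C(y-u(x))\le 2C\varepsilon$ since $f\in C^1$ vanishes on the graph, so $\varepsilon^{-1}|\eta'|\,|f|\le C'$ uniformly, supported on a slab of measure $O(\varepsilon)$, hence its $L^2$-norm is $O(\sqrt\varepsilon)\to0$.  (The Lipschitz bound on $u$ makes $\nabla(y-u)$ bounded, so it contributes only a constant.)  Then mollify each $f_\varepsilon$, whose support is a positive distance from the corresponding boundary piece, to land inside $C^\infty$; a further global cutoff off the remaining (smooth, away from $\Gamma_i$) parts of $\partial D'$ completes the approximation to an element of $\mathcal F_c(D)$.

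**The main obstacle.**  The genuinely subtle point — and where I expect the real work to sit, matching the paper's remark that ``Step~3'' is technically sophisticated — is that $D$ is a \emph{generalized} Lipschitz domain: the boundary gluing map $\rho$ may identify boundary points, so near a joint point $p$ the local picture is not a single Lipschitz subgraph but two (or finitely many) Lipschitz graph pieces meeting along the joint set.  The set-continuity hypothesis $\partial\overline D=\partial D$ is exactly what rules out the joint set containing an open piece of some $\Gamma_i$ (as noted before Theorem~Z), so the joint set is ``thin''; nonetheless one must check that the cutoff $\eta\big((y-u_i(x))/\varepsilon\big)$ built from \emph{each} sheet glues coherently, that the corner/joint contributions to $\|\nabla f_\varepsilon\|_{L^2}$ are still $O(\sqrt\varepsilon)$, and that nothing bad happens where two Lipschitz graphs cross.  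The honest way to control this is to do the estimate upstairs on the manifold-with-boundary $D'$ (where $\rho$ is a diffeomorphism on the interior and the boundary is genuinely Lipschitz via the $(U_i',\Gamma_i',V_i')$), push the estimate down through $\rho$, and observe that $\rho$ being $C^\infty$ with bounded derivatives transfers $W^{1,2}$-bounds with only constant distortion; the measure of the bad slab near the joint set, being covered by finitely many Lipschitz-graph slabs of width $O(\varepsilon)$, remains $O(\varepsilon)$.  I would isolate this as the technical heart of the argument and treat the smooth-boundary case first as a warm-up.
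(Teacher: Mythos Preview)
Your overall architecture is right—localize via a partition of unity, cut off near the Lipschitz boundary, control the bad gradient term—and your handling of the volume-constrained case by subtracting a bump correction is fine. But there is a genuine gap at the heart of the approximation lemma.

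You write ``any $f\in\mathcal F_1(D)$ (equivalently $\mathcal F_0(D)\cap W^{1,2}(D)$, by the trivial inclusions $\mathcal F_0\supset\mathcal F_1\supset\mathcal F_c$)''. This ``equivalently'' is wrong. The trivial inclusions only give $\overline{\mathcal F_c}\subset\overline{\mathcal F_1}\subset\overline{\mathcal F_0\cap W^{1,2}}$, so to collapse all three closures you must show that every $f\in\mathcal F_0(D)\cap W^{1,2}(D)$—not merely every $f\in\mathcal F_1(D)$—is approximable from $\mathcal F_c(D)$. Your argument, however, genuinely uses $f\in C^1(\overline D)$: the key step $|f(x,y)|=|f(x,y)-f(x,u(x))|\le C(y-u(x))$ is a mean-value estimate requiring a uniform bound on $\partial f/\partial y$. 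For $f\in\mathcal F_0\cap W^{1,2}$ the normal derivative may blow up at the boundary (take $f(r)=r^\alpha$ with $\tfrac12<\alpha<1$ near a boundary point: this is $C^0$ up to the boundary, vanishes there, lies in $W^{1,2}$, but is not $C^1$), and your pointwise bound fails. Since $E(D)$ is \emph{defined} as the closure of $\mathcal F_0\cap W^{1,2}$, this case is not optional.

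The paper closes this gap with a Hardy-type inequality (its Lemma~B): for $f$ merely in $W^{1,2}$ with $f|_\Gamma=0$ one has $\int_{N_\delta} f^2 \le \tfrac{\delta^2}{2}\int_{N_\delta}|Df|^2$, and the right side is $o(1)$ as $\delta\to 0$ since $|Df|^2\in L^1$. Combined with $|D\eta|^2\le C/\delta^2$ this yields $\int_{N_\delta}f^2|D\eta|^2\le C\int_{N_\delta}|Df|^2\to 0$, replacing your $C^1$ pointwise control by an $L^2$ one. This is exactly the idea your proof is missing; the paper in fact singles it out in a discussion of $r^\alpha$ right after Lemma~B.

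A smaller point: your cutoff $\eta\big((y-u(x))/\varepsilon\big)$ is only Lipschitz in $x$ because $u$ is only Lipschitz, so $f_\varepsilon\notin C^\infty$. The paper first mollifies $u$ to a smooth $w$ (retaining $|D_xw|\le L$) and builds $\eta$ from $w$. Your ``then mollify each $f_\varepsilon$'' would fix this in principle, but you should then track the mollification radius against the distance to $\Gamma$.

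Finally, your ``main obstacle'' (the joint points) is lighter than you suggest, and the paper does not lift to $D'$. The set-continuity hypothesis forces the joint set to have empty interior in each $\Gamma_i$; continuity of $f$ together with $f|_{\partial D}=0$ then gives $f|_{\Gamma_i}=0$ on every local sheet, and the local lemma applies verbatim on each component of $\widetilde U_{p_k}\cap D$. No corner or gluing estimate is needed at this stage.
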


Remark that $W_{0}^{1,2}(D)$ is also identical with the space of all the
Sobolev functions in $W_{0}^{1,2}(D)$, having zero trace on $\partial D$. Also
note that in the theorem, $D$ may have joint points, where some of its boundary
points are gluing together. Yet the joint set is not open in any $\Gamma_{i}$.
(see Definition 2.2). \\

We shall prove Theorem~Z first locally on a standard Lipschitz simple domain
$(U,\Gamma,V)$ and then extend it to the global version  by a partition of
unity on the given  generalized Lipschitz domains. We have to  treat carefully
the behaviors of variation functions around the possible joint points. In the
sequel, a Lipschitz simple domain $(U,\Gamma,V)$ will be regarded as in
$\mathbb{R}^{n}$, as well as in $M^{n}$, by bridging them freely with a local
coordinate map.

Given $(U,\Gamma,V)$, we may shrink $U$ and $V$ such that
\begin{equation} \label{e3.3}
  U = \{ (x,r) \in \mathbb{R}^{n} \:;\: x \in V, u(x) < r < v(x) \},
\end{equation}
$\Gamma = \{ (x,u(x)) \in \mathbb{R}^{n} \:;\: x \in V \}$, and $V = \{ (x,0)
\in \mathbb{R}^{n} \:;\: x \in V \subset H^{n-1} \}$, where $(x,r)$ is the
chosen coordinate of $U$ and $H^{n-1}$ is a $(n-1)$-hyperplane of
$\mathbb{R}^{n}$. Let $L > 0$ be the Lipschitz constant of $u(x)$, $\forall\, x
\in V$. If we pull $(U,\Gamma,V)$ back to $M^{n}$ with a coordinate map of
$M^{n}$, we may assume that $r$-coordinate curves are geodesics in $M^{n}$
perpendicular to $V$. We call such a $(U,\Gamma,V)$ \emph{standard}, and assume
hereafter that any Lipschitz simple domain under consideration is standard,
unless otherwise specified.

\begin{lemA}[Local version of Theorem~Z] \label{LemA}
Given a (standard) Lipschitz simple domain $(U,\Gamma,V)$, and $f \in
C^{\infty}(U) \cap C^{0}(\overline{U} \cap \Gamma)$ such that $f|_{\Gamma} = 0$
and $f \in W^{1,2}(U)$. Let
\[ 
  C_{\Gamma}^{\infty}(U)
  := \{ h \in C^{\infty}(U) \:;\: \overline{\operatorname{supp} h} \cap \Gamma
    = \phi \big\}.
\]
Then $\forall\, \varepsilon > 0$, $\exists\, h \in C_{\Gamma}^{\infty}(U)$ such
that
\begin{equation} \label{e3.4}
  \|f-h\|_{W^{1,2}(U)}^{2} < \varepsilon.
\end{equation}
\end{lemA}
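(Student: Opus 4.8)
The plan is to approximate $f$ by functions that vanish near $\Gamma$ via a cutoff depending only on the distance to the hyperplane $V$, using that $f$ itself is continuous up to $\Gamma$ with $f|_\Gamma = 0$. Concretely, since the domain is standard, we have $U = \{(x,r) : x \in V,\ u(x) < r < v(x)\}$ with $u$ Lipschitz (constant $L$), and $\Gamma = \{(x,u(x))\}$. First I would reduce to the case $u \equiv 0$ after a bi-Lipschitz change of variables $(x,r) \mapsto (x, r - u(x))$; this is bi-Lipschitz precisely because $u$ is Lipschitz, so it preserves $W^{1,2}$ up to equivalent norms, and it straightens $\Gamma$ to a piece of $V$. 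After this reduction $f \in C^\infty(U) \cap C^0(\overline{U} \cap \{r = 0\})$ with $f(x,0) = 0$, and $U \subset \{0 < r\}$ locally.

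Next I would introduce a one-parameter family of cutoffs. Let $\eta_\delta(r)$ be a smooth function of $r$ alone with $\eta_\delta \equiv 0$ for $r \le \delta$, $\eta_\delta \equiv 1$ for $r \ge 2\delta$, and $|\eta_\delta'| \le C/\delta$. Set $h_\delta := \eta_\delta \cdot f$. Then $h_\delta \in C^\infty(U)$ and $\overline{\operatorname{supp} h_\delta} \cap \Gamma = \phi$ (after undoing the change of variables, the support stays a fixed distance from $\Gamma$), so $h_\delta \in C_\Gamma^\infty(U)$. It remains to estimate
\[
  \|f - h_\delta\|_{W^{1,2}(U)}^2
  = \int_U (1-\eta_\delta)^2 f^2 + \int_U |D((1-\eta_\delta)f)|^2.
\]
The zeroth-order term is supported in $\{r \le 2\delta\}$ and goes to $0$ as $\delta \to 0$ by dominated convergence, since $f \in L^2$. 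The gradient term splits as $\int (1-\eta_\delta)^2 |Df|^2 + \int |\eta_\delta'|^2 f^2 + \text{cross terms}$; the first piece again vanishes as $\delta \to 0$ since $Df \in L^2$ and the region shrinks. The genuinely delicate piece is
\[
  \int_U |\eta_\delta'(r)|^2 f(x,r)^2 \, dx\, dr
  \ \le\ \frac{C}{\delta^2} \int_V \int_\delta^{2\delta} f(x,r)^2 \, dr\, dx.
\]

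The main obstacle is controlling this last integral. The key point is that $f$ vanishes on $\{r = 0\}$ and lies in $W^{1,2}$, so a Hardy-type / Poincaré-on-a-slab inequality gives, for a.e. $x$,
\[
  \int_\delta^{2\delta} f(x,r)^2 \, dr
  \ \le\ \int_0^{2\delta} f(x,r)^2 \, dr
  \ \le\ (2\delta)^2 \int_0^{2\delta} \Bigl(\frac{\partial f}{\partial r}(x,r)\Bigr)^2 dr,
\]
using $f(x,r) = \int_0^r \partial_\rho f(x,\rho)\, d\rho$ and Cauchy–Schwarz. Substituting, the troublesome term is bounded by $C \int_V \int_0^{2\delta} |\partial_r f|^2 \, dr\, dx$, which tends to $0$ as $\delta \to 0$ because $|Df|^2 \in L^1(U)$ and the slab $\{0 < r < 2\delta\}$ shrinks to measure zero. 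The cross terms are handled by Cauchy–Schwarz from the two pieces just estimated. Choosing $\delta$ small enough that the sum is below $\varepsilon$ gives $h = h_\delta$ as required. One should take care that the change of variables only distorts the slab $\{0 < r < 2\delta\}$ into a comparable Lipschitz collar of $\Gamma$ of width $O(\delta)$, so that all the ``region shrinks'' arguments remain valid; this bookkeeping, while routine, is where one must be attentive to the Lipschitz geometry.
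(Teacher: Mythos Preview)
Your overall strategy---multiply $f$ by a cutoff vanishing near $\Gamma$, then control the dangerous term $\int |D\eta|^2 f^2$ via a Hardy/Poincar\'e inequality on a thin slab---is exactly the paper's strategy, and your Hardy estimate is precisely the content of the paper's Lemma~B. However, there is a genuine gap concerning smoothness. The map $(x,r)\mapsto (x,\,r-u(x))$ is only bi-Lipschitz, not a diffeomorphism, since $u$ is merely Lipschitz. Consequently the transformed function $\tilde f(x,s)=f(x,s+u(x))$ is \emph{not} in $C^\infty$ in general, contrary to what you assert; and pulling your cutoff back to the original coordinates gives $\eta_\delta(r-u(x))$, which is likewise not smooth in $(x,r)$. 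Thus $h_\delta=\eta_\delta(r-u(x))\,f(x,r)$ fails to lie in $C_\Gamma^\infty(U)$, which is exactly what the lemma requires.

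The paper avoids flattening altogether. It mollifies $\bar u:=u+2\delta$ at scale $\alpha=\delta/L$ to obtain a \emph{smooth} function $w(x)$ with $|w-\bar u|\le\delta$ and $|D_x w|\le L$, and then sets $\eta(x,r)=\eta_0\big((r-w(x))/\delta\big)$. This $\eta$ is genuinely $C^\infty$, so $h=f\eta\in C_\Gamma^\infty(U)$, while the preserved Lipschitz bound $|D_x w|\le L$ still yields $|D\eta|^2\le C/\delta^2$, after which your Hardy argument (the paper's Lemma~B, applied in original coordinates via Fubini with $\bar r=r-u(x)$) goes through unchanged. This mollification of the graph function is the one idea your proposal is missing; alternatively you could keep your non-smooth $h_\delta$, note it is $W^{1,2}$ with support compactly away from $\Gamma$, and mollify it once more in the interior---but that extra step must be stated and carried out.
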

\begin{proof}
\emph{Step~$1$}. Let $\overline{\delta} > 0$, define
\begin{equation} \label{e3.5}
  N_{\overline{\delta}}
  := \{ (x,r) \in U \:;\: x \in V, u(x) < r < u(x)+\overline{\delta} \}.
\end{equation}

Select $\overline{\delta} = 4\delta$. Consider $\overline{u} = u+2\delta$ and
the smooth approximation function $w(x)$ (see Figure~\ref{F9}) given by
\begin{equation} \label{e3.6}
  w(x)
  := \int_{V} \overline{u}(y) \varphi_{\alpha}(y-x) \, dy, \quad x \in V,
\end{equation}
\begin{figure}[H]
\centering
\includegraphics[width=0.45\textwidth]{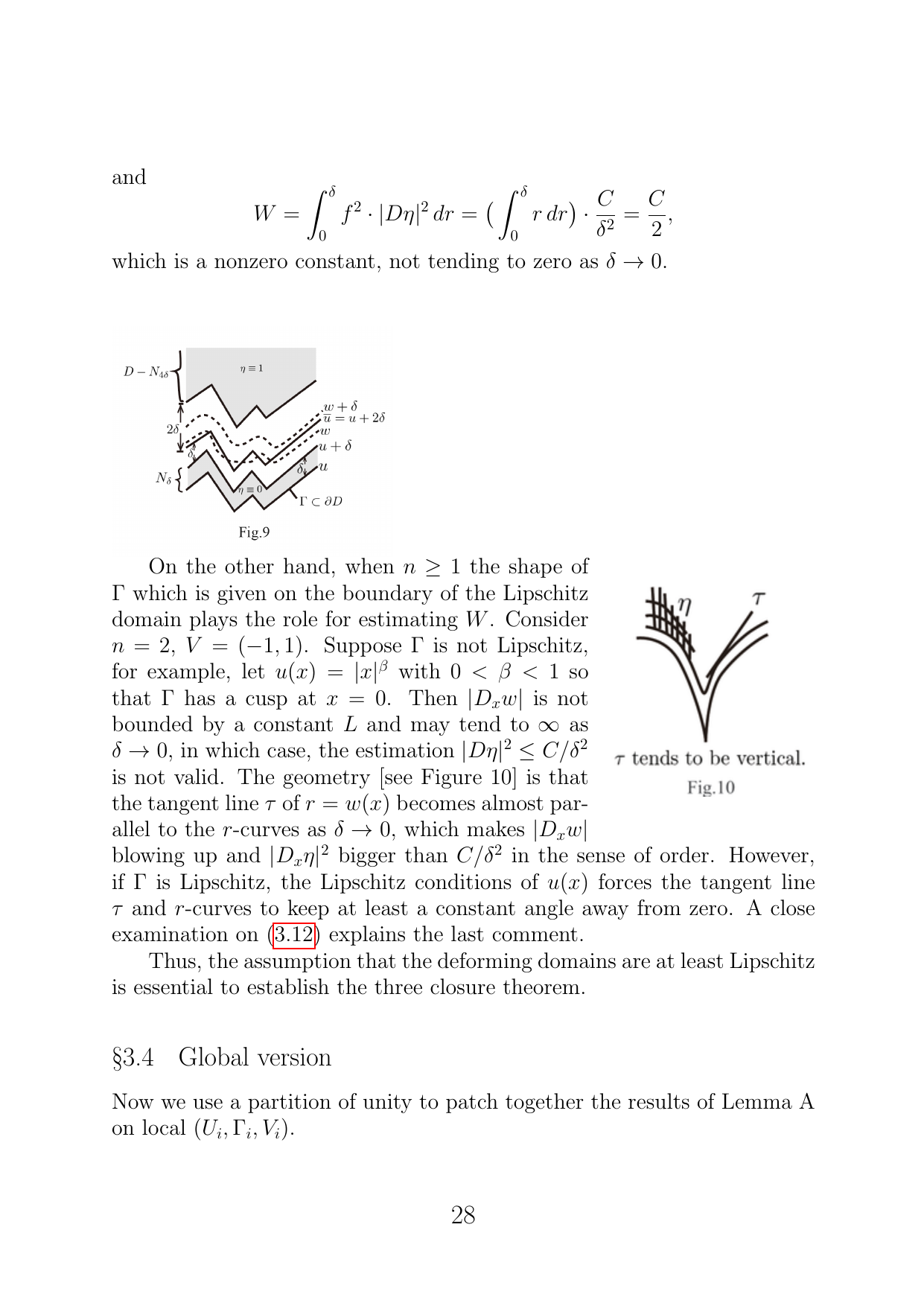}
\caption{} \label{F9}
\end{figure}
where $\alpha$ is a small number,
\[ 
  \varphi_{\alpha}(x)
  := \frac{1}{\alpha^{n-1}} \varphi \big( \frac{x}{\alpha} \big),
\]
and $\varphi \in C^{\infty}(H^{n-1})$ is the standard mollifier. We may freely
expand $U$, $V$ and $u(x)$ a little outward to make $w(x)$ well-defined around
$\partial V$ in $H^{n-1}$. For $x \in V$, we see that
\begin{equation} \label{e3.7}
\begin{split}
  |w(x)-\overline{u}(x)|
  &= \big| \int_{V} (\overline{u}(y)-\overline{u}(x))\; \varphi_{\alpha}(y-x)
  \, dy \big| \\
  &\leq \int_{B_{\alpha}(x)} L \cdot |y-x| \;\varphi_{\alpha}(y-x) \, dy
  \leq L \cdot \alpha
  = \delta,
\end{split}
\end{equation}
by choosing $\alpha = \delta/L$.

\bigskip\noindent\emph{Step~$2$}. Define $\eta \in C^{\infty}(U)$ by $\eta(x,r)
:= \eta_{0} \big( \frac{r-w(x)}{\delta} \big)$, where $\eta_{0} \in
C^{\infty}(\mathbb{R})$ is a cut-off function with $\eta_{0}(x) \equiv 1$ for
$x > 1$; $0 \leq \eta_{0}(x) \leq 1$ for $0 \leq x \leq 1$; and $\eta_{0}(x)
\equiv 0$ for $x \leq 0$. Clearly, $\eta \equiv 1$ on $U -
N_{\overline{\delta}}$, and $\eta \equiv 0$ on $N_{\delta}$, recalling
$\overline{\delta} = 4\delta$. Define
\[ 
  h := f \cdot \eta \in C_{\Gamma}^{\infty}(U),
\]
we estimate the Sobolev norm of $f-h$ as follows:
\begin{equation} \label{e3.8}
  \int_{U} |f-h|^{2}
  = \int_{N_{\overline{\delta}}} |f(1-\eta)|^{2}
  \leq \int_{N_{\overline{\delta}}} f^{2}
  \leq C \big( \int_{N_{\overline{\delta}}} |Df|^{2} \big) \cdot \delta^{2},
\end{equation}
where the last inequality will be shown by Lemma~B right after this proof of
Lemma~A, and
\begin{equation} \label{e3.9}
\begin{split}
  \int_{U} |Df-Dh|^{2}
  &= \int_{N_{\overline{\delta}}} |Df(1-\eta) - f(D\eta)|^{2} \\
  &\leq C \big( \int_{N_{\overline{\delta}}} |Df|^{2} + f^{2} |D\eta|^{2}
  \big).
\end{split}
\end{equation}
Hereafter $C$ in formulas denotes positive constants of any form, independent
of $\delta$, $(x,r)$, and $f$, $h$, $\ldots$, such as $C = 2C = 2 = L =
1+L^{2}$, etc., just for convenience. Note that $\int_{U} |Df|^{2} < \infty$,
since $f \in W^{1,2}(U)$. Write $P(\overline{\delta}) \equiv
\int_{N_{\overline{\delta}}} |Df|^{2}$, then $P(\overline{\delta}) \to 0$ as
$\overline{\delta} \to 0$. Let the last term of \eqref{e3.9} be denoted by
\begin{equation} \label{e3.10}
  W := \int_{N_{\overline{\delta}}} f^{2} \cdot |D\eta|^{2},
\end{equation}
which is the difficult part to estimate. In the following Step~3, we will
establish $|D\eta|^{2} \leq C/\delta^{2}$. Then by Lemma~B, we have
\begin{equation} \label{e3.11}
  W
  \leq \frac{C}{\delta^{2}} \big( \int_{N_{\overline{\delta}}} f^{2} \big)
  \leq \frac{C}{\delta^{2}} (P(\overline{\delta}) \cdot \overline{\delta}^{2})
  = C \cdot P(\overline{\delta})
  \to 0,
\end{equation}
as $\delta \to 0$. Given $\varepsilon > 0$, we may then choose $\delta > 0$
small such that
\[ 
  \|f-h\|_{W^{1,2}(U)}^{2}
  = \int_{U} |f-h|^{2} + \int_{U} |Df-Dh|^{2}
  < \varepsilon,
\]
as required.

\bigskip\noindent\emph{Step~$3$}. We have
\begin{equation} \label{e3.12}
\begin{split}
  |D\eta|^{2}
  & \leq 2(|D_{x} \eta|^{2} + ( \frac{\partial \eta}{\partial r} )^{2}) \\
  &= 2 \big[ \eta'_{0} \big( \frac{r-w(x)}{\delta} \big) \big]^{2}
    \big[ \big( \frac{-1}{\delta} |D_{x}w| \big)^{2}
    + \big( \frac{1}{\delta} \big)^{2} \big] \\
  &\leq 2 \frac{C_{1}}{\delta^{2}} (1+|D_{x}w|^{2}),
\end{split}
\end{equation}
where $C_{1}$ is the bound of the term $|\eta'_{0}|^{2}$, a constant
independent of $\delta$. In the first inequality of \eqref{e3.12}, the number 2
is attributed to the deviation of the metric $g_{ij}$ of $M^{n}$ from the flat
metric on $(x,r)$ of $(U,\Gamma,V)$. Claim that
\begin{equation} \label{e3.13}
  |D_{x}w| \leq L.
\end{equation}
Given $x \in V$, for $z$ very close to $x$ in $V$, we have
\begin{align*} 
  &\quad |w(z)-w(x)| \\
  &= \big| \int_{V} \overline{u}(y) \varphi_{\alpha}(y-z) \, dy
    - \int_{V} \overline{u}(y) \varphi_{\alpha}(y-x) \, dy \big| \\
  & = \big| \int_{B_{\alpha}(0)} \overline{u}(z+\zeta')
  \varphi_{\alpha}(\zeta')
    \, d\zeta'
    - \int_{B_{\alpha}(0)} \overline{u}(x+\zeta) \varphi_{\alpha}(\zeta) \,
    d\zeta \big| \\
  & \leq \big| \int_{B_{\alpha}(0)}
  |\overline{u}(z+\zeta)-\overline{u}(x+\zeta)|\;
    \varphi_{\alpha}(\zeta) \, d\zeta \\
  &\leq L \cdot |z-x|.
\end{align*}
Letting $z \to x$, we obtain \eqref{e3.13} and hence
\begin{equation} \label{e3.14}
  |D\eta|^{2} \leq 2C_{1} (1+L^{2})/\delta^{2} = C/\delta^{2}.
\end{equation}
\end{proof}

\begin{lemB} \label{LemB}
Let $(U,\Gamma,V)$ be a Lipschitz simple domain given by \eqref{e3.3}. For any
$\delta > 0$, let $N_{\delta} \subset U$ be the $\delta$-neighborhood of
$\Gamma$ in $U$ defined as in \eqref{e3.5}. Given $f$ as in Lemma A, we have
\begin{equation} \label{e3.15}
  \int_{N_{\delta}} f^{2}
  \leq \big( \int_{N_{\delta}} |Df|^{2} \big) \cdot \frac{\delta^{2}}{2}
\end{equation}
with $P(\delta) := \int_{N_{\delta}} |Df|^{2} \to 0$ as $\delta \to 0$.
\end{lemB}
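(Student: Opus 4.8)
The plan is to reduce \eqref{e3.15} to a one--dimensional Poincar\'e inequality along the geodesic rays that foliate $N_\delta$ in the $r$--direction, integrate that slicewise bound over $V$, and obtain $P(\delta)\to0$ from absolute continuity of $\int_U|Df|^2$. I would work in the standard coordinates of \eqref{e3.3}, writing points of $U$ as $(x,r)$ with $x\in V$, $u(x)<r<v(x)$, so that (after shrinking $U$) the $r$--curves are unit--speed geodesics of $M^n$ orthogonal to $V$ and the metric splits as $dr^2\oplus g_{ij}(x,r)\,dx^i\,dx^j$; in particular $|Df|^2\geq(\partial_r f)^2$ pointwise. Then $N_\delta=\{(x,r):x\in V,\ u(x)<r<u(x)+\delta\}$ is foliated by the segments $r\mapsto(x,r)$, $u(x)<r<u(x)+\delta$, each of which meets $\Gamma$ exactly at $r=u(x)$. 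The transfer between $\mathbb R^n$ and $M^n$ through the coordinate map costs only a bounded metric factor --- already carried by the constant in \eqref{e3.8} --- so the computation may be carried out in the flat model.

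For the slice estimate, I would fix $x\in V$ in the full--measure set for which $f(x,\cdot)$ is absolutely continuous on compact subintervals of $(u(x),v(x))$ and $\int_{u(x)}^{u(x)+\delta}(\partial_r f(x,s))^2\,ds<\infty$; that this holds for a.e.\ $x$ follows from Fubini applied to $|Df|^2\in L^1(U)$. Since $f\in C^0(\overline U\cap\Gamma)$ with $f|_\Gamma=0$, we have $f(x,r)\to0$ as $r\to u(x)^+$, whence $f(x,r)=\int_{u(x)}^{r}\partial_s f(x,s)\,ds$. Two applications of Cauchy--Schwarz, followed by integration in $r$ over $(u(x),u(x)+\delta)$ with $\int_{u(x)}^{u(x)+\delta}(r-u(x))\,dr=\delta^2/2$, yield the slicewise bound $\int_{u(x)}^{u(x)+\delta}f(x,r)^2\,dr\leq\frac{\delta^2}{2}\int_{u(x)}^{u(x)+\delta}(\partial_r f(x,s))^2\,ds$. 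Integrating over $x\in V$ and using $(\partial_r f)^2\leq|Df|^2$ then gives \eqref{e3.15}.

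For the last assertion, since $f\in W^{1,2}(U)$ we have $|Df|^2\in L^1(U)$; the sets $N_\delta$ decrease to $\bigcap_{\delta>0}N_\delta=\emptyset$ as $\delta\downarrow0$, so $|N_\delta|\to0$, and dominated convergence (dominating by $|Df|^2\,\mathbf 1_{N_{\delta_0}}$ for a fixed $\delta_0$) forces $P(\delta)=\int_{N_\delta}|Df|^2\to0$.

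I expect the only genuinely delicate point to be the pointwise representation $f(x,r)=\int_{u(x)}^{r}\partial_s f(x,s)\,ds$ on a.e.\ normal segment: here one must combine the continuity of $f$ up to $\Gamma$ (supplying the vanishing boundary value at the endpoint $r=u(x)$) with the ACL/Fubini property of the $W^{1,2}$ function $f$ (supplying integrability of $\partial_r f$ on a.e.\ segment --- including near that endpoint --- and hence the fundamental theorem of calculus there). Everything else, namely the one--dimensional Poincar\'e estimate, the integration over $V$, the flat--to--$M^n$ metric bookkeeping, and the vanishing of $P(\delta)$, is routine.
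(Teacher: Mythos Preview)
Your proposal is correct and follows essentially the same route as the paper: write $f(x,r)=\int_{u(x)}^{r}\partial_s f\,ds$ on each $r$-segment using $f|_\Gamma=0$, apply Cauchy--Schwarz to get the factor $(r-u(x))$, integrate in $r$ to produce $\delta^2/2$, then integrate over $V$ via Fubini. Your treatment is in fact slightly more careful than the paper's --- you spell out the ACL justification for the fundamental-theorem-of-calculus step and the dominated-convergence argument for $P(\delta)\to0$, both of which the paper leaves implicit --- but the underlying computation is the same (the paper also remarks that the shear $(x,r)\mapsto(x,r-u(x))$ is not a diffeomorphism but that only Fubini is needed, which is consistent with your setup).
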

\begin{proof}
Write $f = f(x,r)$, $x \in V$ and $u(x) < r < u(x)+\delta$. Let $\overline{r} =
r-u(x)$, then
\begin{equation} \label{e3.16}
\begin{split}
  \int_{N_{\delta}} f^{2} \, dr dx
  &= \int_{V} \big( \int_{0}^{\delta} f(x,\overline{r})^{2} \, d\overline{r}
  \big)
    \, dx \\
  &= \int_{V} \int_{0}^{\delta} \big( \int_{0}^{\overline{r}} f_{r}(x,s)
    \, ds \big)^{2} \, d\overline{r}\; dx
  \equiv I,
\end{split}
\end{equation}
where $f_{r} = \frac{\partial f}{\partial r}$. But
\begin{equation} \label{e3.17}
  \big( \int_{0}^{\overline{r}} f_{r}(x,s) \, ds \big)^{2}
  \leq \big( \int_{0}^{\overline{r}} f_{r}^{2} \, ds \big) \cdot \overline{r}
  \leq \big( \int_{0}^{\overline{r}} |Df|^{2} \, ds \big) \;\overline{r}.
\end{equation}
Thus
\begin{equation} \label{e3.18}
\begin{split}
  I
  &\leq \int_{V} \int_{0}^{\delta} \big[ \big( \int_{0}^{\delta} |Df|^{2}
    \, ds \big) \cdot \overline{r} \big] \, d\overline{r}\; dx \\
  &= \int_{V} \big( \int_{0}^{\delta} |Df|^{2} \, ds \big)
    \big( \int_{0}^{\delta} \overline{r} \, d\overline{r} \big) \, dx \\
  &= \int_{V} \big( \int_{0}^{\delta} |Df|^{2} \, ds \big) \, dx
    \cdot \frac{\delta^{2}}{2}
  = \big( \int_{N_{\delta}} |Df|^{2} \big) \cdot \frac{\delta^{2}}{2}.
\end{split}
\end{equation}
Remark that the coordinate transformation $(x,r) \mapsto (x,\overline{r})$ is
not a diffeomorphism. However, as only Fubini's theorem is involved in the
integration, the diffeomorphism is not required in the above estimation.
\end{proof}

The estimation of $W = \int_{N_{\delta}} f^{2} \cdot |D\eta|^{2}$ (see
\eqref{e3.10}) is crucial to the proof of Theorem~Z. For the local version, it
is interesting to preview its estimation on $1$-dimensional domains, i.e., $n =
1$. For example, consider $U$ an interval $(0,a) \subset \mathbb{R}^{1}$, and
$f(r) = r^{\alpha}$, $r \in (0,a)$ and $1/2 \leq \alpha < 1$. Then $\lim_{r \to
0} \frac{\partial f}{\partial r} = \infty$. Clearly, $f \in W^{1,2}(U)$ if and
only if $1/2 < \alpha$. Through straightforward computations, one sees that $W
= C\delta^{1/3}$ for $f(r) = r^{2/3}$, and $W = C\delta^{1/5}$ for $f(r) =
r^{3/5}$. As for $\alpha = 1/2$, i.e., $f(r) = r^{1/2}\notin W^{1,2}(U)$, and
$W$ is not controllable, since
\[
  \int_{0}^{\delta} f_{r}^{2} \, dr
  = \frac{1}{4} \int_{0}^{\delta} \frac{1}{r} \, dr
  = \infty,
\]
and
\[ 
  W
  = \int_{0}^{\delta} f^{2} \cdot |D\eta|^{2} \, dr
  = \big( \int_{0}^{\delta} r \, dr \big) \cdot \frac{C}{\delta^{2}}
  = \frac{C}{2},
\]
which is a nonzero constant, not tending to zero as $\delta \to 0$.\\

\begin{wrapfigure}{r}{0.3\textwidth}
\centering
\includegraphics[width=0.2\textwidth]{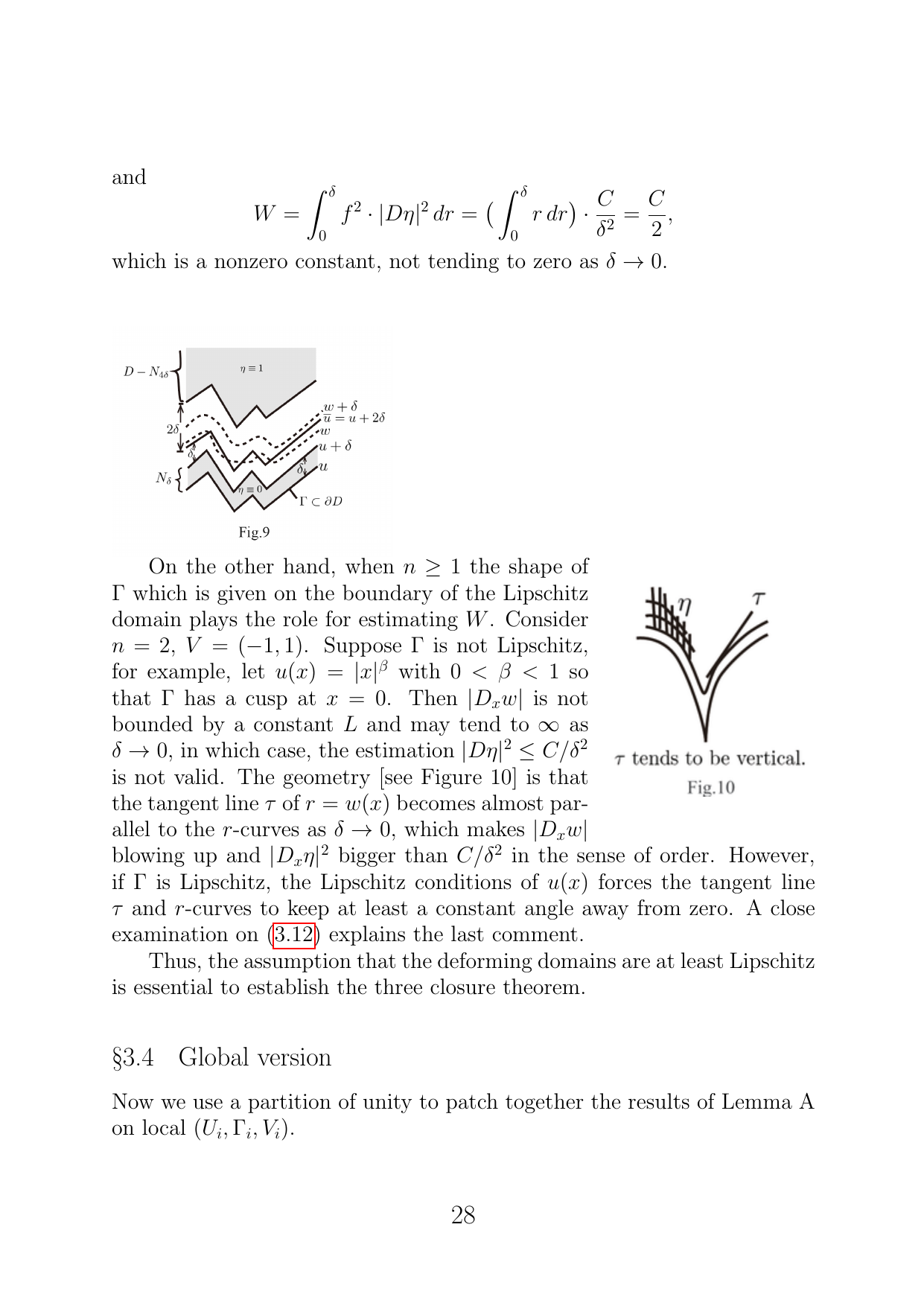}
\caption{$\tau$ tends to be vertical.} \label{F10}
\end{wrapfigure}
On the other hand, when $n \geq 1$ the shape of $\Gamma$ which is given on the
boundary of the Lipschitz domain plays the role for estimating $W$. Consider $n
= 2$, $V = (-1,1)$. Suppose $\Gamma$ is not Lipschitz, for example, let $u(x) =
|x|^{\beta}$ with $0 < \beta < 1$ so that $\Gamma$ has a cusp at $x = 0$. Then
$|D_{x}w|$ is not bounded by a constant $L$ and may tend to $\infty$ as $\delta
\to 0$, in which case, the estimation $|D\eta|^{2} \leq C/\delta^{2}$ is not
valid. The geometry (see Figure~\ref{F10}) is that the tangent line $\tau$ of
$r = w(x)$ becomes almost parallel to the $r$-curves as $\delta \to 0$, which
makes $|D_{x}w|$ blowing up and $|D_{x} \eta|^{2}$ bigger than $C/\delta^{2}$
in the sense of order. However, if $\Gamma$ is Lipschitz, the Lipschitz
conditions of $u(x)$ forces the tangent line $\tau$ and $r$-curves to keep at
least a constant angle away from zero. A close examination on \eqref{e3.12}
explains the last comment.

Thus, the assumption that the deforming domains are at least Lipschitz is
essential to establish the three closure theorem.

\subsection{Global version} \label{S3.4}
Now we use a partition of unity to patch together the results of Lemma~A on
local $(U_{i},\Gamma_{i},V_{i})$. \\

\begin{proof}[Proof of Theorem~\textup{Z}]
First consider the case of no volume constraint: Given $g$ in the closure of
$\mathcal{F}_{0}(D)\cap W^{1,2}(D)$ in $W^{1,2}(D)$, i.e., $\forall\epsilon >0,  \exists
f \in \mathcal{F}_{0}(D)\cap W^{1,2}(D)$ such that $\|g-f\|_{W^{1,2}(D)}^{2} <
\varepsilon$, we claim that there exists $h \in \mathcal{F}_
{c}(D)$ with $\|g-h\|_{W^{1,2}(D)}^{2} < \varepsilon$, by making $h$ very close
to $f$ in $W^{1,2}(D)$.

\bigskip\noindent\emph{Step~$1$}.
 For any $p \in \partial D$, choose an open set $\widetilde{U}_{p}$ of $M^{n}$
 with $p \in \widetilde{U}_{p}$.
The compact boundary $\partial D$ has a finite subcover $\{
\widetilde{U}_{p_{1}}, \ldots, \widetilde{U}_{p_m} \}$ of $\{ \widetilde{U}_{p}
\:;\: p \in \partial D \}$ with each $p_{k} \in \partial D$. As $D$ is a
“generalized” Lipschitz domain, it may happen that some particular
$\widetilde{U}_{p_{k}} \cap D$ is a disjoint union of more than one components,
each being a Lipschitz simple domain (see Figure~\ref{F4}). Consider all the
components of $\widetilde{U}_{p_{k}} \cap D$ with $k$ ranging over all $\{
1,2,\ldots,m \}$, and rewrite them as $\{ U_{1},\ldots,U_{\nu} \}$. By
shrinking $\widetilde{U}_{p}$ in the beginning, we may assume each
$(U_{i},\Gamma_{i},V_{i})$ a standard Lipschitz simple domain, where
$\Gamma_{i}$ is attached to $U_i$. Remark that if the joint set has a part $J$
which is open in some $\Gamma_{i}$, the given $f$ is not necessarily zero on
the whole $\Gamma_i$, since $J$ is not included in $\partial D$. However, we have
assumed that the generalized Lipschitz domain $D$ satisfies the set-continuity,
the joint set can not be open in $\Gamma_{i}$. Hence, $f=0$ on each
$\Gamma_{i}$, and the arguments of Lemma A are applicable here on each
$(U_{i},\Gamma_{i},V_{i})$.

\bigskip%
\begin{figure}[h]
\centering
\includegraphics[width=0.35\textwidth]{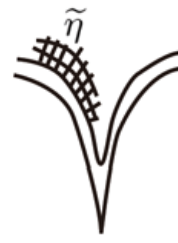}
\caption{constructing partition of unity} \label{F11}
\end{figure}
\bigskip\noindent\emph{Step~$2$.} Let $Q := D - (U_{1} \cup \cdots \cup
U_{\nu})$. Choose $U_{0}$ open in $D$ with $Q \subset U_{0} \subset\subset D$.
Write $U'_{i} := U_{i} \cup \Gamma_{i}$ for $i = 1,\ldots,\nu$. Then $\{
U_{0},U_{1},\ldots,U_{\nu} \}$ is an open cover of $D$, and $\{
U_{0},U'_{1},\ldots,U'_{\nu} \}$ is a cover of $\overline{D}$. Construct a
partition of unity by shrinking each $U_{i}$ once again. First we define an
extended notion of compactly inclusion up to the boundary, $``\subset\subset'
"$, as follows. Given $A$, $B$ open in $D$. If there are extensions
$\widetilde{A}$ and $\widetilde{B}$, both open in $M^{n}$ with $\widetilde{A}
\cap D = A$, $\widetilde{B} \cap D = B$ and $\widetilde{A} \subset\subset
\widetilde{B}$, we say that $``A \subset\subset' B"$, which means ``$A
\subset\subset B$ except on $\partial D$". Let $K_{1} := D - (U_{2} \cup U_{3}
\cup \cdots \cup U_{\nu} \cup U_{0}) \subset U_{1}$. Consider $G_{1}$ open in
$U_{1}$ such that $K_{1} \subset G_{1} \subset\subset' U_{1}$. Similarly, let
$K_{2} := D - (G_{1} \cup U_{3} \cup U_{4} \cup \cdots \cup U_{\nu} \cup U_{0})
\subset U_{2}$ and consider $G_{2}$ open in $U_{2}$ such that $K_{2} \subset
G_{2} \subset\subset' U_{2}$. Inductively, find $G_{i} \subset\subset' U_{i}$
for any $i = 1,2,3,\ldots,\nu$. Finally, choose an open set $G_{0}
\subset\subset D$ and $G_{0} \supset D - (G_{1} \cup \cdots \cup  G_{\nu})
\supset U_{0}$. We obtain another open cover $\{ G_{0},G_{1},\ldots,G_{\nu} \}$
of $D$ (see Figure~\ref{F11}). Define $\psi_{i} \in C^{\infty}(D)$ for each $i
= 1,2,\ldots,\nu$, such that $\psi_{i} \equiv 1$ on $G_{i}$, $\psi_{i} \equiv
0$ on $D-U_{i}$ and $0 \leq \psi_{i} \leq 1$ on $D$. And let $\psi_{0}\in
C^{\infty}_{c}(D)$, such that $\psi_{0}\equiv$ 1 on $G_{0}$; $0\leq
\psi_{0}\leq 1$ on $D$.    We see that $\forall\, p \in D$, $\exists\,
\psi_{i}$ with $\psi_{i}(p) = 1$, since $\{ G_{0},G_{1},\ldots,G_{\nu} \}$
covers $D$. Hence $\psi_{0} + \psi_{1} + \cdots + \psi_{\nu} \gneqq 0$ on $D$.
Consider $\varphi_{i} := \psi_{i}/(\psi_{0} + \psi_{1} + \cdots + \psi_{\nu})
\in C^{\infty}(D)$. We obtain that
\[ 
  1_{D} = \varphi_{0} + \varphi_{1} + \cdots + \varphi_{\nu},
\]
where $\operatorname{supp} \varphi_{i} \subset U_{i}$ for each $i =
0,\ldots,\nu$.

\bigskip\noindent\emph{Step~$3$.} Given $f \in F_{0}(D) \cap W^{1,2}(D)$, we
claim that $\forall\, \varepsilon > 0$, $\exists\, h \in C_{c}^{\infty}(D)$
satisfying
\[ 
  \|f-h\|_{W^{1,2}(D)} = O(\varepsilon).
\]
By Lemma~A, we see that $\forall\, i \in \{ 1,\ldots,\nu \}$, there exists a
corresponding cut-off function $\eta_{i} \in C_{\Gamma_{i}}^{\infty}(U_{i})$
such that $\|f-h_{i}\|_{W^{1,2}(U_{i})} = O(\varepsilon)$, where $h_{i} := f
\cdot \eta_{i}$. Let $h_{0} := f \in C^{\infty}(D)$. Define
\begin{equation} \label{e3.19}
  h := \varphi_{0} h_{0} + \varphi_{1} h_{1} + \cdots + \varphi_{\nu} h_{\nu},
\end{equation}
then $h$ is smooth. Evidently the support $\operatorname{supp} (\varphi_{0}
h_{0}) \subset U_{0} \subset \subset D$, $h_{i} = f \cdot \eta_{i} \in
C_{\Gamma_{i}}^{\infty}(U_{i})$, and $\operatorname{supp} (\varphi_{i} h_{i})
\subset\subset D$, $\forall\, i = 1,\ldots,\nu$. Hence $\operatorname{supp} h
\subset\subset D$ and $h \in C_{c}^{\infty}(D)$. We can write
\[ 
  f-h = \varphi_{1}(f-h_{1}) + \cdots + \varphi_{\nu}(f-h_{\nu}),
\]
since $\varphi_{0}(f-h_{0}) \equiv 0$. It can be seen that
\begin{equation} \label{e3.20}
\begin{split}
  &\quad \|f-h\|_{W^{1,2}(D)}^{2} \\
  &\leq C \sum_{i=1}^{\nu} \big\{ \int_{U_{i}} \varphi_{i}^{2} |f-h_{i}|^{2}
    + \int_{U_{i}} \varphi_{i}^{2} |Df-Dh_{i}|^{2}
    + \int_{U_{i}} |D\varphi_{i}|^{2} |f-h_{i}|^{2} \big\} \\
  &\leq C \sum_{i=1}^{\nu} \|f-h\|_{W^{1,2}(U_{i})}
  = O(\varepsilon),
\end{split}
\end{equation}
since $\varphi_{i}^{2}$ and $|D\varphi_{i}|^{2}$ are zero outside $U_{i}$,
$\varphi_{i}^{2} \leq 1$ and $|D\varphi_{i}|^{2} < C$, independent of
$\varepsilon$. The last assertion \eqref{e3.20} is based on Lemma~A.

\bigskip\noindent\emph{Step~$4$.} It remains to show Theorem~Z for the case
with volume constraint, i.e., to show the second statement \eqref{e3.2}. Recall
the definition of $H(D)$ in \eqref{e2.1}. Given $f \in \mathcal{G}_{0}(D) \cap
W^{1,2}(D)$ we have to prove that $\forall\, \varepsilon > 0$, $\exists\,
\widetilde{h} \in \mathcal{G}_{c}(D)$ such that
\begin{equation} \label{e3.21}
  \|f-\widetilde{h}\|_{W^{1,2}(D)} = O(\varepsilon).
\end{equation}
Still consider $h_{i} := f \cdot \eta_{i}$ (as in Step~2) on each $U_{i}$, $i =
1,\ldots,\nu$, and $h_{0} := f$ on $D$. Define $\eta_{0} \in C^{\infty}_{c}(D)$
by $\eta_{0} \equiv 1$ on the open set $G_{0}$, defined in Step 2, and $0\leq
\eta_{0} \leq 1$ on $D$. Let $\eta_{i}$
and $\delta_{i}$ be defined as in the proof of Lemma~A, where $(U_{i},
\Gamma_{i}, V_{i})$ are considered, $i = 1,\ldots,\nu$. Define
\[ 
  \eta
  := \varphi_{0} \eta_{0} + \varphi_{1} \eta_{1} + \cdots + \varphi_{\nu}
  \eta_{\nu}.
\]
 Then $\eta \geq 0$ on $D$, and $\int_{D}{\eta} \gneqq 0$. Choose $\delta > 0$
 such that $\delta > 4\delta_{i}$, $\forall\, i = 1,\ldots,\nu$, and consider
 $D_{\delta} := D - N_{\delta} \subset\subset D$, where $N_{\delta}$ is the
 $\delta$-tubular neighborhood of $\partial D$ in $D$. Let $h$ be given by
 \eqref{e3.19} and define
\begin{equation} \label{e3.22}
  \widetilde{h} := h - \mu \eta, \quad \mu := \frac{\int_{D} h}{\int_{D} \eta}.
\end{equation}
Clearly, $h = \varphi_{0} f\eta_{0} + \varphi_{1} f\eta_{1} + \cdots +
\varphi_{\nu} f\eta_{\nu} = f\eta \in C_{c}^{\infty}(D)$, and $\int_{D}
\widetilde{h} = 0$ by \eqref{e3.22}. Hence $\widetilde{h} \in
\mathcal{G}_{c}(D)$. Now claim that $\mu$ is small. By $f \in
\mathcal{G}_{0}(D)$, we have $\int_{D} f = 0$,
\begin{align*} 
  \big| \int_{D} f\eta \, \big|
  &= \big| \int_{D_{\delta}} f + \int_{D-D_{\delta}} f\eta \, \big|
  = \big| \big( \int_{D} f - \int_{D-D_{\delta}} f \big)
    + \int_{D-D_{\delta}} f\eta \, \big| \\
  &= \big| \int_{D-D_{\delta}} f(1-\eta) \big|
  \leq \int_{D-D_{\delta}} |f|.
\end{align*}
Hence,
\[ 
  \big| \int_{D} f\eta \big|^{2}
  \leq \big( \int_{N_{\delta}} |f| \big)^{2}
  \leq \big( \int_{N_{\delta}} |f|^{2} \big) \cdot |N_{\delta}|
  \leq C \big( P(\delta) \cdot \delta^{2} \big) \cdot \delta
\]
by Lemma~B. Thus,
\begin{equation} \label{e3.23}
  |\mu|^{2}
  = \frac{\big| \int_{D} f\eta \big|^{2}}{\big| \int_{D} \eta \big|^{2}}
  = o(\delta^{3})
  \to 0
\end{equation}
as $\delta \to 0$. Finally, we obtain
\begin{align*}
\begin{split}
  \int_{D} |f-\widetilde{h}|^{2}
  &\leq C \big( \int_{D} |f-h|^{2} + \int_{D} |\mu \eta|^{2} \big) \\
  &\leq C \int_{D} |f-h|^{2} + o(\delta^{3}),
\end{split} \\
  \int_{D} |Df-D\widetilde{h}|^{2}
  &\leq C \big( \int_{D} |Df-Dh|^{2} + \int_{D} \mu^{2} |D\eta|^{2} \big).
\end{align*}
Thus,
\[ 
  \|f-\widetilde{h}\|_{W^{1,2}(D)}^{2}
  \leq C \big( \|f-h\|_{W^{1,2}(D)}^{2} + \mu^{2} \int_{D} |D\eta|^{2} \big).
\]
Using \eqref{e3.14} and \eqref{e3.23}, the last term is of the order
$o(\delta^{3} \cdot 1/\delta^{2}) = o(\delta)$, tending to zero as $\delta \to
0$. But in Step~3, we have shown \eqref{e3.20}. Thus, \eqref{e3.21} is proved.
The proof of Theorem~Z is completed.
\end{proof}

A second proof of Theorem~Z reduces to the case of smooth domains by shrinking
a given Lipschitz domain $D$ into $D' \subset\subset D$ where $S \equiv
\partial D'$ is smooth and lies in
\[ 
  N_{\delta}
  := \{ p \in D \:;\: \operatorname{dist}(p, \partial D) < \delta \}.
\]

The presentation of the second proof is to show clearly how the Lipschitz
condition is essential for the three closure theorem.  Recall that the ``focal
distance" $s(p)$ of $S$ at $p \in S$ means the distance from $p \in S$ to the
first focal point $\overline{p} = p + s(p) \cdot \nu$, at which the map $p
\mapsto p+t\nu$ is singular for the smallest $t > 0$. Here $\nu = \nu(p)$
denotes the unit normal of $S$ inward $D$ at $p$. \\

The idea of the second proof is sketched as follows. If we can control the
minimal focal distance $\overline{s} := \min \{ s(p) \:;\: p \in S \}$, so that
$\overline{s} > c\delta$, $c$ being a constant independent of $\delta$, as
$\delta$ tends to zero, then we can construct $\widetilde{\eta} \in
C_{c}^{\infty}(D)$ with $|D\widetilde{\eta}|^{2}$ controllable, where
$\widetilde{\eta} \equiv 0$ for $t \leq 0$; $0 \leq \widetilde{\eta} \leq 1$
for $0\leq t \leq c\delta < \overline{s}$, and $\widetilde{\eta} \equiv 1$
elsewhere on $D$. Given $f \in \mathcal{F}(D) \cap W^{1,2}(D)$ and any
$\varepsilon > 0$. By letting $h := f \cdot \widetilde{\eta}$, we can select
$\delta > 0$ sufficiently small and show that $\|f-h\|_{W^{1,2}(D)} =
O(\varepsilon)$.\\

\begin{wrapfigure}[10]{r}{0.35\textwidth}
\centering
\includegraphics[width=0.15\textwidth]{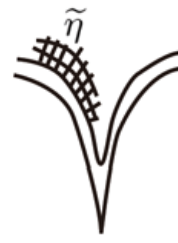}
\caption{focal distance $\overline{s}$ is as small as $o(\delta)$} \label{F12}
\end{wrapfigure}
Again, the Lipschitz condition of $\partial D$ is essential to control $|D
\widetilde{\eta}|^{2}$. For example, let $n = 2$ and consider $\partial D$
locally as a graph of $r = u(x) = |x|^{\alpha}$, $0 < \alpha < 1$, with a cusp
at $x = 0$. The minimal focal distance is smaller than the order of $\delta$,
i.e., $\overline{s} = o(\delta)$ (see Figure~\ref{F12}). One can show that
$|D\widetilde{\eta}|^{2} \neq O(1/\delta^{2})$ and $|D\widetilde{\eta}|^{2}
\cdot \delta^{2} \to \infty$ as $\delta \to 0$. Thus $h \equiv f \cdot
\widetilde{\eta}$ in this example does not tend to $f$ in $W^{1,2}(D)$ when
$\delta \to 0$.

\begin{lemC} \label{LemC}
Given a Lipschitz simple domain $(U,\Gamma,V)$ and a small number $\delta > 0$,
let $r = \overline{u}(x) = u(x) + 2\delta$, $x \in V$, as in Step~$1$ of
Lemma~\textup{A}. Consider the smooth approximation $w_{\alpha}(x)$ of
$\overline{u}(x)$ defined by \eqref{e3.6}. Let $n = 2$. By choosing $\alpha =
\delta/L$, we have
\begin{enumerate}
\item[\textup{(i)}] $|w_{\alpha}(x)-\overline{u}(x)| \leq L \cdot \alpha =
\delta$ and $|w''_{\alpha}(x)| \leq 2C_{2} \cdot L/\alpha = 2C_{2}
L^{2}/\delta$, where $C_{2} = \max \{ |\varphi''(x)| \:;\: -1 \leq x \leq 1
\}$, $L$ is the Lipschitz constant and $\varphi$ is the standard modifier.

\item[\textup{(ii)}] the minimal focal distance $\overline{s} \geq c\delta$
by choosing $\alpha = \delta/L$ and $c\equiv 1/(2C_{2} L^{2})$.
\end{enumerate}
\end{lemC}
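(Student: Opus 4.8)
The plan is to handle the two assertions separately: part (i) is a quantitative mollification estimate, and part (ii) is a plane-curve focal-point computation fed by (i). Throughout I work in the standard chart $(x,r)$ of the Lipschitz simple domain, so that $V$ is one-dimensional (as $n=2$) and $\varphi_\alpha(x)=\alpha^{-(n-1)}\varphi(x/\alpha)=\alpha^{-1}\varphi(x/\alpha)$.

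For part (i), the first inequality $|w_\alpha(x)-\overline{u}(x)|\le L\alpha=\delta$ is exactly the estimate \eqref{e3.7} from the proof of Lemma~A, applied to $\overline{u}=u+2\delta$, which has the same Lipschitz constant $L$ as $u$. For the bound on $w''_\alpha$, I would differentiate \eqref{e3.6} twice under the integral sign; this is legitimate because the mollifier $\varphi_\alpha$ is smooth with compact support and its $x$-derivatives are dominated by integrable functions of $y$, even though $\overline{u}$ is merely Lipschitz. One obtains $w''_\alpha(x)=\int_V \overline{u}(y)\,\varphi''_\alpha(y-x)\,dy$. Since $\varphi$ has compact support, $\int\varphi''_\alpha=0$, so after subtracting the constant $\overline{u}(x)$,
\[
  w''_\alpha(x)=\int_V\big(\overline{u}(y)-\overline{u}(x)\big)\,\varphi''_\alpha(y-x)\,dy .
\]
Using $|\overline{u}(y)-\overline{u}(x)|\le L|y-x|$, the scaling $\varphi''_\alpha(z)=\alpha^{-3}\varphi''(z/\alpha)$, and the change of variables $z=(y-x)/\alpha$, the right side is bounded by $L\alpha^{-1}\int_{-1}^{1}|z|\,|\varphi''(z)|\,dz\le 2C_2 L/\alpha$ with $C_2=\max_{[-1,1]}|\varphi''|$. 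Choosing $\alpha=\delta/L$ turns this into $2C_2 L^2/\delta$, as claimed.

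For part (ii), note first that in the second proof of Theorem~Z one takes $S=\partial D'$ to coincide, near $\Gamma$, with the graph $r=w_\alpha(x)$, which is smooth by (i) and lies in $N_{\overline{\delta}}$. I would then recall the elementary fact that a plane curve given as a graph $r=w(x)$ has curvature $\kappa=|w''|/(1+(w')^2)^{3/2}\le|w''|$, and that the focal distance $s(p)$ along the inward normal $\nu(p)$ (as defined before Lemma~C) equals $1/\kappa(p)$ when $\nu$ points to the concave side and is $+\infty$ otherwise; in either case $s(p)\ge 1/|\kappa(p)|\ge 1/|w''_\alpha(p)|$. Combined with the bound from (i), $s(p)\ge\alpha/(2C_2 L)=\delta/(2C_2 L^2)=c\delta$ at every such $p$, and since each boundary piece of $S$ is the graph of its own $w_\alpha$, taking the minimum over all of $S$ gives $\overline{s}\ge c\delta$.

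The only delicate point is the passage from the Euclidean graph curvature to the genuine curvature of $S$ inside $M^2$: in the Fermi-type coordinates adopted for a standard Lipschitz simple domain the metric is $dr^2+g_{11}(x,r)\,dx^2$ with $g_{11}\not\equiv 1$, and — exactly as in the factor $2$ appearing in \eqref{e3.12} — this produces correction terms in the second fundamental form of $S$. These corrections are controlled by $\sup|\partial g_{11}|$ over the chart, a bound independent of $\delta$, so by shrinking $\widetilde{U}_{p}$ at the outset one keeps them of order lower than $1/\delta$; the stated constant $c=1/(2C_2 L^2)$ is then valid for all sufficiently small $\delta$ (or, if one insists on a clean statement uniform in $\delta$, after a harmless enlargement of $C_2$). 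This metric bookkeeping, rather than the curvature computation or the mollification estimate, is the step to treat with care.
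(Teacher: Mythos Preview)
Your proof is correct and follows essentially the same approach as the paper: reference \eqref{e3.7} for the first estimate in (i), subtract the constant $\overline{u}(x)$ (using $\int\varphi''_\alpha=0$) and apply the Lipschitz bound for the second, then in (ii) bound the curvature of the graph $r=w_\alpha(x)$ by $|w''_\alpha|$ and use that the focal distance is at least the radius of curvature. Your discussion of the Riemannian-versus-Euclidean metric correction is in fact more careful than the paper, which simply writes the flat curvature formula and identifies $\overline{s}$ with $\inf_x 1/|K(x)|$ without comment.
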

\begin{proof}
The computations are straightforward. The first inequality of (i) has been
shown in \eqref{e3.7}, letting $\alpha = \delta/L$. The second can be
calculated directly, except minding the formula:
\[ 
  \int_{V} \overline{u}(x) \varphi'' \big( \frac{y-x}{\alpha} \big) \, dy
  = \alpha \overline{u}(x) \big[ \varphi' \big( \frac{y-x}{\alpha} \big)
    \big]_{x-\alpha}^{x+\alpha}
  = 0.
\]
As for (ii), we see that
\[ 
  |K(x)|
  = \big| w''_{\alpha}(x)/\! \sqrt{1+w'_{\alpha}(x)^{2}} \big|
  \leq |w''_{\alpha}(x)|
  \leq 2C_{2} L/\alpha
  = 2C_{2} L^{2}/\delta,
\]
where $K(x)$ is the curvature of $r = w_{\alpha}(x)$ at $x$. The radius $R(x)$
of the curvature circle at $x$ is $1/K \geq \alpha/(2C_{2}L) = C\delta$,
$\forall\, x \in V$, by denoting $c := 1/(2C_{2} L^{2})$. Thus,
\[ 
  \overline{s}
  = \inf \{ R(x) \:;\: x \in V \}
  \geq c\;\delta. \qedhere
\]
\end{proof}

Lemma~C is also valid for general dimension $n$. The proof is similar, yet a
more tedious language is needed to describe the aforementioned idea for $n =
2$.\\

To finish the second proof of Theorem~Z, we have to start with Lipschitz simple
domains $(U_{i},\Gamma_{i},V_{i})$ locally and construct $\widetilde{\eta}_{i}
\in C_{\Gamma_i}^{\infty}(U_{i})$ for $i = 1,\ldots,\nu$. Consider the smooth
surface $S_{i}$ defined by $r = w_{i}(x)$ approximating $r = \overline{u}(x)$
as before. Denote it's minimal focal distance by $\overline{s}_{i}$. By
Lemma~C, $\exists\, c_{0}$ with $0 < c_{0} < 1$ such that $\overline{s}_{i} >
2c_{0} \delta$, $\forall\, i = 1,\ldots,\nu$. Define $\widetilde{\eta}_{i}$ by
\[ 
  \widetilde{\eta}_{i}(p+t\nu)
  := \widetilde{\eta}_{0} \big( \frac{t}{c_{0} \delta} \big),
\]
where $\widetilde{\eta}_{0} \in C^{\infty}(\mathbb{R}^{1})$ is a cut-off
function on $\mathbb{R}^{1}$ such that $\widetilde{\eta}_{0}(x) = 0$ for $x
\leq 0$; $0 \leq \widetilde{\eta}_{0}(x) \leq 1$ for $0 \leq x \leq 1$; and
$\widetilde{\eta}_{0}(x) \equiv 1$ for $x \geq 1$. Let $h_{i} := f
\widetilde{\eta}_{i}$, $\forall\, i = 1,\ldots,\nu$. Introduce the partition of
unity $\{ \varphi_{0}, \varphi_{1}, \ldots, \varphi_{\nu} \}$ as in the first
proof and define $h$ by \eqref{e3.19}. We have $h \in C_{c}^{\infty}(D)$.
Clearly, $|D\widetilde{\eta}_{i}|^{2} = |D_{t} \widetilde{\eta}_{i}|^{2} \leq
\frac{\widetilde{C}_{1}}{c_{0}^{2} \delta^{2}}$, where $\widetilde{C}_{1}$ is
the bound of the term $|\widetilde{\eta}'_{0}|^{2}$, a constant independent of
$\delta$. The rest of the proof, including the treatment of the case with
volume constraint, is essentially the same with the first proof.\\

In summary, the two proofs share the following key points:
\begin{enumerate}
\item[(i)] the estimations of $|D\eta|^{2}$ and $|D\widetilde{\eta}|^{2}$,
which rely basically on the Lipschitz condition on the boundary $\partial D$,
are provided by Lemmas~A and C,

\item[(ii)] when the domain is reduced to $n = 1$, where Lipschitz condition
makes no sense, the essential part of the proofs is the estimation of
$\int_{N_{\delta}} f^{2}$, based on Lemma~B.
\end{enumerate}
One might consider a third proof by flattening the Lipschitz boundary $\partial
D$ locally, which is not attempted in this paper.

\section{Proof of Sobolev continuity} \label{S4}
\subsection{Proof of Theorem~S} \label{S4.1}
\begin{proof}[Proof of Theorem~\textup{S}]
\emph{Step~$1$.} Claim that $H_{t} = \bigcap_{r > t} H_{r}$: ``$\subset$" is
obvious, since $H_{t} \subset H_{r}$, $\forall\, r > t$. We shall show the
converse ``$\supset$" by using $\overline{D(t)} = \bigcap_{r > t}
\overline{D(r)}$ as follows. Given $g \in \bigcap_{r > t} H_{r}$, it is not
difficult to see that $\operatorname{supp} g \subset D(t)$:

\medskip\noindent (i) For an open set $V \subset M^{n}$ with $\overline{V} \cap
\overline{D(t)} = \phi$, there exists $r' > t$ such that $\overline{V} \cap
D(r') = \phi$. In fact, suppose the contrary, there exist $r_{n}$ and $x_{n}$,
with $r_{n} > t$, tending to $t$, and $x_{n} \in \overline{V} \cap D(r_{n})$.
Consider a convergent subsequence, still denoted by $x_{n}$, which tends to
$x_{0} \in \overline{V}$. Given $r > t$, $\exists\, N$ such that $r > r_{n} >
t$, $\forall\, n > N$. Then $x_{n} \in D(r_{n}) \subset D(r)$ and $x_{0} =
\lim_{n} x_{n} \in \overline{D(r)}$. Thus $x_{0} \in \bigcap_{r > t}
\overline{D(r)} = \overline{D(t)}$, by \eqref{e2.4}. This contradicts the
assumption $\overline{V} \cap \overline{D(t)} = \phi$. Remark that the previous
argument \emph{does not} apply to the counterexample in Figure~\ref{F5}, where
$D(t)$  is continuous \emph{in measure}, yet the condition $\bigcap_{r > t}
\overline{D(r)} = \overline{D(t)}$ is not satisfied.

\medskip\noindent (ii) Suppose that there exists an open set $V$ in $M^{n}$
with $\overline{V} \cap \overline{D(t)} = \phi$ and $g \not\equiv 0$ a.e.~on
$V$. By (i), $\exists\, r' > t$ with $\overline{V} \subset M^{n}-D(r')$. Hence
$g \notin H_{r'}$, against the assumption that $g \in \bigcap_{r > t} H_{r}$.
Therefore, $\operatorname{supp} g \subset \overline{D(t)}$. By \eqref{e2.6},
the interior $(\overline{D(t)})^{0} = \overline{D(t)} -
\partial(\overline{D(t)}) = \overline{D(t)} - \partial(D(t)) = D(t)$. We see
that supp $g \subset D(t)$ and $g \in L^{2}(D(t))$. Remark that for the
monotone continuum $\mathcal{D'}_{4}$ in Example~\ref{E3.3}, the last statement
supp $g \subset D(t)$ for $t = 1$ is false, as $\mathcal{D'}_{4}$ is not
set-continuous, though it is continuous in Hausdorff measure.

\medskip\noindent (iii) Clearly, $\int_{D(t)} g = 0$, since $g \in H_{r}$,
$\forall\, r > t$, and $\operatorname{supp} g \subset D(t)$.\\

\bigskip\noindent\emph{Step~$2$.} Obviously, $g$ has its weak derivatives
$h_{i} := D_{i} g \in L^{2}(D(t))$, $\forall\, i = 1,\ldots,n$. In fact, by $g
\in H_{r}$, $\forall\, r > t$, there are $h_{i} \in L^{2}(D(r))$ such that
\begin{equation} \label{e4.1}
  \int_{D(r)} g \cdot D_{i} \varphi = -\int_{D(r)} h_{i} \cdot \varphi,
    \quad \forall\, \varphi \in C^{\infty}_{c}(D(r)).
\end{equation}
We shall see that $h_{i} \big|_{D(r) - \overline{D(t)}} = 0$ almost everywhere.
If there exists an open set $U \subset D(r) - \overline{D(t)}$ such that $h_{i}
\not\equiv 0$ a.e.~on $U$. Choose $\varphi \in C^{\infty}_{c}(D(r))$ with
$\overline{\operatorname{supp} \varphi} \subset U$ and $\int_{U} h_{i} \cdot
\varphi \neq 0$. This violates \eqref{e4.1}, since its LHS $= 0$ by Step~1, but
the RHS $\neq 0$. Hence, $h_{i} \in L^{2}(D(t))$, $i=1,2,\ldots,n$, are the
weak derivatives of $g$ in $D(t)$.\\

\bigskip\noindent\emph{Step~$3$.} We now come to the subtle part in the proof
of Theorem~S. To claim $g \in H_{t}$, it remains to show that the trace $Tg$ of
$g$ is zero on $\partial(D(t))$. In fact, we will show that $g$ lies in the
closure of $\mathcal{G}_{c}(D(t))$ in $W^{1,2}(D(t))$.

\medskip\noindent (i) Given $g \in \bigcap_{r > t} H_{r}$. By using Theorem~Z,
there exist $f_{n} \in \mathcal{G}_{c}(D(r_{n}))$, $r_{n} \searrow t$, such
that $\|f_{n}-g\|_{H_b}^{2} < 1/n$, tending to $0$ as $n \to \infty$. Now claim
that $\lim_{n} f_{n}|_{\partial(\overline{D(t)})} = 0$. There are two parts
$\Lambda$ and $\Lambda'$ of $\partial(\overline{D(t)})$, which we have to deal
with separately. The part $\Lambda$ basically consists of the points on
$\partial \overline{D(t)}$, where the domain $D(t)$ extends to a real ``larger"
$D(r)$ with $r > t$;  and $\Lambda'$ is the part of $\partial \overline{D(t)}$
on which $D(t)$ ceases to enlarge, i.e., $\exists \, r_{0} > t$  such that
$\partial D(r)$ is identical with $\partial D(t)$ for any $r$,  $t<r<r_{0}$.
More precisely, writing $U \equiv D(b) - \overline{D(t)}$, we consider $\Lambda
\equiv \partial \overline{D(t)} \cap \overline{U}$, and $\Lambda' \equiv
\partial \overline{D(t)} - \Lambda$. Then $\partial D(t)=
\partial\overline{D(t)} = \Lambda \cup \Lambda'$, noting that the joint set of
$\partial D(t)$ contains \emph{no open set of} any $\Gamma_{i}$, which is in a triple $(U_i,\Gamma_i,V_i)$ given in Definition~\ref{D2.2}. (see Remark~\ref{R2.8}). We will show that $\lim_{n} f_{n} = 0$ a.e.~on both $\Lambda$ and $\Lambda'$, which implies that $f_{n} \to 0$ almost everywhere on $\partial D(t)$ as $n \to \infty$.\\

\medskip\noindent (ii) Let $U = D(b) - \overline{D(t)}$. We have
$\|f_{n}\|_{W^{1,2}(U)}^{2} = \|f_{n}-g\|_{W^{1,2}(U)}^{2} \leq
\|f_{n}-g\|_{H_b}^{2} \to 0$ as $n \to \infty$. where $f_{n} \in
\mathcal{G}_{c}{(D(r_{n}))}$, $r_{n} \searrow t$. Remark that in (ii) of
Step~1, we have shown that $g = 0$ almost everywhere on $U$. Now we claim that
$f_{n} \to 0$, a.e. on $\Lambda \equiv \partial \overline{D(t)} \cap
\overline{U}$. Given $p \in \Lambda$, \emph{not} a joint point of $\partial
D(t) = \partial \overline{D(t)}$. As $\Lambda$ is a Lipschitz boundary around
$p$, there is a Lipschitz simple  domain $(U_{p},\Gamma_{p},V_{p})$ in $U$,
where $U_{p} \subset U$ and $p \in \Gamma_{p} \subset \Lambda$. Consider
correspondingly the local coordinate $(x,r)$ of $U_{p}$, such that the graph $r
= u(x)$ is $\Gamma_{p}$ , in which $x \in V_{p} \subset \mathbb{R}^{n-1}$ and
$u(x)$ is a Lipschitz function of $x$. Let $U_{p} = \{ (x,r) ; u(x) < r < v(x)
\}$. Choose $n >$ some $n_{0}$, so that $W \equiv \{ v(x)-a < r < v(x) \}$ is
disjoint from $\overline{D(r_{n})}$, where $a$ is a small number. This is
guaranteed by (i) of Step~1. Given $n > n_{0}$. Since $f_{n} \in
\mathcal{G}_{c}(D(r_{n}))$, we may regard $f_{n} \in \mathcal{G}_{c}(D(b))$ by
extending $f_{n}$ on $D(r_{n})$ to the whole $D(b)$ with $f_{n}$ vanishing
outside $D(r_{n})$. We see that $f_{n} = 0$ on $W$, and have
\begin{equation} \label{e4.2}
\begin{split}
  \int_{\Gamma_{p}} f_{n}^{2} \, dx
  &= \int_{\Gamma_{p}} \left( -\int_{u(x)}^{v(x)}
    \frac{\partial f_{n}}{\partial r} \, dr  \right)^{2} \, dx
  \leq C\int_{\Gamma_{p}} \int_{u(x)}^{v(x)}
    \left( \frac{\partial f_{n}}{\partial r} \right)^2 \, dr dx \\
  &\leq C\int_{U_{p}} |Df_{n}|^{2}
  \leq C\|f_{n}\|^{2}_{W^{1,2}(U)} \to 0 \quad \textrm{as $n \to \infty$}.
\end{split}
\end{equation}
By \eqref{e4.2}, $f_{n} \to 0$ on $\Gamma_{p}$ as $n \to \infty$. This proves
the claim, since the joint set is of measure zero. Remark that at a joint point
$p \in \Lambda$, the triple $(U_{p},\Gamma_{p},V_{p})$ with $U_{p} \subset U$
may not exist, since $\Lambda$ is not a Lipschitz boundary of $U$. For example,
look at $-p_0$ in $\mathcal{D}_{3}$ of Example~\ref{E3.2} (see
Figure~\ref{F6}). \\

\medskip\noindent (iii) Let $\Lambda' \equiv \partial \overline{D(t)} -
\Lambda$. We see that $\Lambda'$ is the subset of $\partial \overline{D(t)}$ on
which $\partial \overline{D(t)} = \partial \overline{D(r)}$ for any $r$, with
$t<r<b$. In fact, any point $q \in \Lambda'$ has a neighborhood $Q$, disjoint
from $U$. As $U= D(b)-\overline{D(t)} = \bigcup_{r > t}
(D(r)-\overline{D(t)})$, we have $Q \cap (D(r)-\overline{D(t)}) = \phi$, for
any $r$ with $t<r<b$. Hence in $Q$, $\overline{D(r_{n})} = \overline{D(t)}$,
$\forall\, r_{n} > t$. Therefore, on $\Lambda'$, it is clear that $\partial
\overline{D(r_{n})} = \partial \overline{D(t)}$, $\forall\, r_{n} > t$.
However, $f_{n} \in \mathcal{G}_{c}(D(r_{n}))$. We see that $f_{n}
\big|_{\Lambda'} = 0$ on $\partial \overline{D(r_{n})} = \partial
\overline{D(t)}$, $\forall\, r_{n} > t$, and therefore, $f_{n} \equiv 0$ on
$\Lambda'$. \\

\medskip\noindent (iv)
We have obtained $\lim_{n} f_{n} = 0$ a.e.~on the whole $\partial D(t)$, as
$\Lambda \cup \Lambda' = \partial \overline{D(t)} = \partial D(t)$ by
\eqref{e2.5}. Remark that in the example $\mathcal{D}_4$, $\partial
\overline{D(t)} \subsetneqq \partial D(t)$, since $L_{0} \subset \partial D(t)$
but not included in $\partial\overline{D(t)}$. The last argument is not valid,
which says that the Sobolev continuity may not be true for $\mathcal{D}_4$.  \\

\medskip\noindent (v) Now use the arguments for proving Theorem~Z to conclude
that the trace $Tg \equiv 0$ on $\partial D(t)$. Apply the proof of Lemma~A
locally to $f_{n}$ on each Lipschitz simple domain $(U_{i},\Gamma_{i},V_{i})$
given by Definition~\ref{D2.2}. Note that the assumption of  $f|_{\Gamma_{i}} =
0$ in Lemma~A is now replaced by $\int_{\Gamma_{i}} f_{n}^{2} \to 0$ as $n \to
0$, and hence $\lim_{n} f_{n} = 0$  almost everywhere on $\Gamma_{i}$ for each
$n$. The arguments to show \eqref{e3.4} in Lemma A are still valid, except that \eqref{e3.16}--\eqref{e3.18} of Lemma~B should be modified. Remark that
$P(\delta):= \int_{N_{\delta}} |{Df}_{n}|^{2}$ still tends to 0 as $\delta \to
0$, since $\int_{N_{\delta}} |{Df}_{n}|^{2} \leq C \int_{N_{\delta}} |Dg|^{2}$
which is independent of $n$, and $\int_{N_{\delta}} |Dg|^{2}$ is of order
$O(\delta)$ as $g\in W^{1,2}(D)$. The estimation of \eqref{e3.16} becomes
\begin{equation} \label{e4.3}
\begin{split}
  &\quad \int_{N_{\delta}} f_{n}^{2} \, dr dx \\
  &= \int_{V} \int_{0}^{\delta} \left\{ f_{n}(x,0)^{2}
    + 2f_{n}(x,0) \int_{0}^{r} f'_{n}(x,s) \, ds
    + \left( \int_{0}^{r} f'_{n}(x,s) \, ds \right)^{2} \right\} \, dr dx \\
  &\leq \left( \int_{\Gamma} f_{n}^{2} \right) \cdot O(\delta)
        + \left( \int_{\Gamma} f_{n}^{2} \right)^{1/2} \cdot O(\delta^{3/2})
    + \int_{N_{\delta}} |Dg|^{2} \cdot O(\delta^{2}),
\end{split}
\end{equation}
respectively. The first two terms tend to $0$ as $n \to 0$. Thus for $f_{n}$,
Lemma~B as well as Lemma~A are still valid, where we consider $h_{n} = f_{n}
\cdot \eta \in C^{\infty}_{\Gamma}(U)$ in \eqref{e3.4}. \\

\medskip\noindent (vi)
The difficulty here is that we know only $\lim_{n} f_{n} = 0$, almost
everywhere on $\partial D(t)$. The function $f_{n}$ may approximate to a jump
function, which is nonzero at an isolated joint point $p_{0} \in \partial D(t)$
and vanishes elsewhere. In that case, $g$ may not have trace $Tg = 0$ around
$p_{0}$. Fortunately, the estimates in Lemma~A and Lemma~B can be modified as
the above \eqref{e4.3}. Thus the crucial inequalities \eqref{e3.11},
\eqref{e3.15} and \eqref{e3.8} are still valid. Again, by the partition of
unity, there exists $h_{n} \in C^{\infty}_{c}(D(t))$ (still denoted by $h_{n}$
for convenience), such that $\|g-h_{n}\|_{W^{1,2}(D(t))}^{2} \to 0$, as $n \to
\infty$. This is equivalent to say that the trace $Tg = 0$ on $\partial D(t)$,
i.e., the ``boundary value" of $g$ is zero on $\partial D(t)$. By Theorem~Z,
it also means that $g \in W_{0}^{1,2}(D(t)) = E(D(t))$.

\medskip\noindent (vii)
In general, given $g \in E(D)$, it is evident that $g \in H(D)$ iff $\int_{D} g
= 0$.  However, we have proved that $\int_{D(t)} g = 0$ in Step~1, and $g \in
E(D(t))$ by (vi), it is concluded that $g \in H_{t}$. The second equality of
the Sobolev continuity in Theorem~S is proved. \\

\bigskip\noindent\emph{Step~$4$.} Now we show that $\overline{\bigcup_{s < t}
H_{s}} = H_{t}$, the first equality in Theorem~S. To claim ``$\subset$": Given
$f \in \overline{\bigcup_{s < t} H_{s}}$. Note that the closure is taken in
$H_{b}$, where we regard $H_{s} \subset H_{b}$, $\forall\, s \leq b$, as in the
previous manner. Evidently, $\operatorname{supp} f \subset D(t)$. Suppose on
the contrary, $\exists$ open set $U \subset D(b) - \overline{D(t)}$, such that
$\int_{U} |f|^{2} = c > 0$. By the assumption of $f$, $\exists\, s_{0} < t$ and
$h_{0} \in H_{s_{0}}$ with $\|f-h_{0}\|^{2}_{H_{b}} < c/2$, which contradicts
$\|f-h_{0}\|^{2}_{H_{b}} \geq \int_{U} |f|^{2} = c$. Thus $f \in L^{2}(D(t))$.

\bigskip\noindent\emph{Step~$5$.} Given $\epsilon > 0$, there exist
$\overline{s} < t$ and $g \in H_{\overline{s}}$ such that $\|f-g\|_{H_{b}} <
\epsilon/2$. By Theorem~Z, $H_{\overline{s}}$ is the closure of
$\mathcal{G}_{c}(D(\overline{s}))$ in $W^{1,2}(D(\overline{s}))$. There is $h
\in \mathcal{G}_{c}(D(\overline{s}))$ with $\|g-h\|_{H_{b}} < \epsilon/2$,
noting that $\|\cdot\|_{H_{s}} = \|\cdot\|_{H_{b}}$, $\forall\, s \leq b$.
Hence $\|f-h\|_{H_{b}} \leq \epsilon$, where $h \in
\mathcal{G}_{c}(D(\overline{s})) \subset \mathcal{G}(D(t))$, which means that
$f$ is in the closure of $\mathcal{G}(D(t))$ in $W^{1,2}(D(t)) = H_{t}$.
Therefore, $\overline{\bigcup_{s < t} H_{s}} \subset H_{t}$. Remark that
$\mathcal{G}_{c}(D(\overline{s})) \subset \mathcal{G}(D(t))$, by letting the
values of Sobolev functions in $\mathcal{G}_{c}(D(\overline{s}))$  be zero
outside $D(\overline{s})$.

\bigskip\noindent\emph{Step~$6$.} It remains to show ``$\supset$". We need to
use $\bigcup_{s < t} D(s) = D(t)$. Given $f \in H_{t}$ and $\varepsilon > 0$,
there exists $g \in \mathcal{G}_{c}(D(t))$ with $\|f-g\|_{H(t)} < \varepsilon$,
by Theorem~Z. Let $W$ be an open set with $\overline{W} \subset D(t)$, we claim
that $\exists\, s' < t$ such that $\overline{W} \subset D(s')$. Suppose on the
contrary, then $\forall\, s_{n} \nearrow t$, $\exists\, x_{n} \in D(s_{n})^{c}
\cap \overline{W}$, where $D(s)^{c}$ denotes the complement of $D(s)$ in
$\overline{D(b)}$. Select a convergent subsequence of $\{x_{n}\}$, still
denoted by $\{x_{n}\}$. Let $x_{0} = \lim_{n} x_{n}$. Given any $s < t$, there
exist $x_{n} \in D(s_{n})^{c} \subset D(s)^{c}$ for $n$ big enough. Hence
$x_{0} \in D(s)^{c}$, since $D(s)^{c}$ is closed in $\overline{D(b)}$, as
$D(b)$ is also assumed to be relatively compact. Hence, we have that $x_{0} \in
\bigcap_{s < t} D(s)^{c} = \big( \bigcup_{s < t} D(s) \big)^{c} = D(t)^{c}$,
contradicting $\overline{W} \subset D(t)$. As $g \in \mathcal{G}_{c}(D(t))$,
$\overline{\operatorname{supp} g} \subset D(t)$. There exists $s' < t$, such
that $\overline{\operatorname{supp} g} \subset D(s')$. Hence $g \in
\mathcal{G}_{c}(D(s')) \subset H_{s'}$, which yields that $f \in
\overline{\bigcup_{s < t} H(s)}$. The proof of Theorem~S is completed.
\end{proof}

\begin{rmk} \label{R4.1}
The separation properties of $V$ and $W$ stated in Step~1 and Step~6 are not
true in general for a monotone continuum of domains in $M^{n}$, It is
interesting to look at $V$ and $W$ in Figure~\ref{F5} of \S\ref{S3.1}, where a
monotone continuum is continuous in measure.
\end{rmk}

We have said in \S\ref{S2.1} that the set-continuity implies the Hausdorff
continuity. Basically, a similar argument of Step~1 in the proof of Theorem~S
can be used to show the implication. However, the converse is not true, as
demonstrated by Examples $\mathcal{D}_{4}$, $\mathcal{D'}_{4}$,
$\mathcal{D}_{5}$ and $\mathcal{D'}_{5}$ of \S\ref{S3.2}. Furthermore,
Theorem~S is not valid for the monotone family of Example~\ref{E3.2}, which is
exactly one-dimensional case of $\mathcal{D}_4$.\\

Few more words to comment about the subtle arguments in Step~3 of the proof of
Theorem~S: Given $g \in \bigcap_{r > t} H_{r}$, $g$ provides no ``sky-bridge"
passing through a joint point to connect the given simple Lipschitz domains.
This is an interesting intuitive observation supporting the proof of the trace
$Tg=0$. In the proof, we should be very careful concerning the behavior of $g
\in \bigcap_{r > t} H_{r}$ at the joint points of the ``generalized" Lipschitz
domain $D(t)$, as the joint points make the topological types varies. Some
complicated arguments are focused on Step~3 to prove that the trace $Tg$ vanish
on $\partial D(t)$. We use Figure~\ref{F6} in \S\ref{S3.1} to illustrate some
typical situation for $g$ that we have to avoid. The dangerous situation to be avoided is
that $g$ happens to be  $\chi_{D(t)}$
or its alike, around the joint point $-p_{0}$, at which the closure
$\overline{D(t)}$ glued together. (Here $\chi_{D(t)}$ denotes the
characteristic function of $D(t)$.) In that case, $g$ would not belong to
$H_{t}$, since $Tg$ would not vanish a.e. on $\partial{D(t)}$. Fortunately,
$\chi_{D(t)}$ is not weakly differentiable in $D(r), r>t$. Thus $\chi_{D(t)}$
is not in $H_{r}$ for any $r > t$. It says that the given Sobolev function $g$
in $\bigcap_{r > t} H_{r}$ can not be $\chi_{D(t)}$, or its alike. Step~3 of
the proof of Theorem~S, especially (v) and (vi) of it,  is elaborated to
exclude this kind of situation in general. The above observation that
$\chi_{D(t)}$ is not in $H_{r}$ for any $r > t$, i.e., there is no sky-bridge
connecting the upper and the lower parts of $D(t)$ at $-p_{0}$,  provides an
intuitive background underlying the arguments of the proof of the Sobolev
continuity. This marks a highlight in the proof of the Sobolev continuity.

\subsection{Proof of Theorem~B} \label{S4.2}
It remains to show Theorem~B concerning the continuity of eigenvalues of
the stability operator, as well as their strict monotonicity. Based on the Sobolev
continuity, proved in Theorem~S, the arguments \cite{FT90} by Frid--Thayer for
smooth continuums can be carried over to our $C^{0}$-continuums of generalized
Lipschitz domains. In order to provide an easier reference, we rewrite it in a
formulation consistent with this paper.\\

We shall only prove Theorem~B for $\widetilde{\lambda}_{k}$. The arguments also
work for $\lambda_{k}$, which is of a simpler case.

\begin{proof}[Proof of Theorem~\textup{B}]
\bigskip\noindent\emph{Step~$1$.}  Given $r_{i} \searrow t$, i.e., $\lim_{i \to
\infty} r_{i} = t$, $t < r_{i} < r_{i-1}$, $\forall\, i$. We claim the right
continuity of $\widetilde{\lambda}_{k}$. Denote by $\{u_{k}(t)\}$ an
orthonormal basis of eigenfunctions of $\widetilde{L}_{t} \equiv
\widetilde{L}(D(t))$ in $H_{t}$, corresponding to
$\{\widetilde{\lambda}_{k}(t)\}$. For each $r_{i}$, let
\[
  v_{1}(r_{i}), v_{2}(r_{i}), \ldots, v_{k}(r_{i}), \ldots
  \in H_{r_{i}}
  \subset H_{b}
\]
be an orthonormal base of eigenfunctions of $\widetilde{L}_{r_{i}}$
corresponding to $\widetilde{\lambda}_{j}(r_{i})$, $j = 1,2,\ldots,k,\ldots$.
For each $j$, $\widetilde{\lambda}_{j}(r_{i})$ is increasing in $i$ and bounded
by $\widetilde{\lambda}_{j}(t)$. Clearly, $\widetilde{\lambda}_{j}(r_{i}) \to$
some $\alpha_{j}$ as $i \to \infty$. We will show that $\alpha_{j} =
\widetilde{\lambda}_{j}(t)$.

\bigskip\noindent\emph{Step~$2$.} (Right-continuity) Fix $j \in \mathbb{N}$.
The set $\{ v_{j}(r_{1}), v_{j}(r_{2}), \ldots \}$ is bounded in $L^{2}(D(b))$,
since $\|v_{j}(r_{i})\|_{L^{2}} = 1$. Claim that it has a subsequence
converging to $v_{j}$ in $H_{t}$. We write $v(r_{i}) \equiv v_{j}(r_{i})$ at
the moment by dropping the index $``j"$ to simplify notations. In fact,
\begin{align*} 
  I_{D(b)}(v(r_{i}),v(r_{i}))
  &= \int_{D_{b}} |Dv(r_{i})|^{2} - |B|^{2} (v(r_{i}))^{2} \\
  &= \widetilde{\lambda}_{j}(r_{i}) \: \|v(r_{i})\|^{2}_{L^{2}}
  \leq \widetilde{\lambda}_{k}(t) \quad \textrm{(by \eqref{e2.15})},
\end{align*}
i.e., $\|Dv(r_{i})\|^{2} < C$, independent of $i$. Thus we have a subsequence
of $\{v(r_{i})\}$ which converges to $v_{j}$ in $L^{2}(D(b))$. Still denote the
subsequence by $v(r_{i})$. Let $w_{il} \equiv v(r_{i}) - v(r_{l})$, $i \geq l$.
Remark that $v(r_{i})$ is regular on $D(r_{i})$. Computing $\|Dw_{il}\|^{2}$ in
$L^{2}(D(b))$ and using the integration by parts, it is not difficult to show
that $\|Dw_{il}\|^{2} \to 0$, as $i,l \to \infty$. Thus $D_{h} v(r_{i})$ is a
Cauchy sequence in $L^{2}(D(b))$, where $D_{h}v(r_{i})$ denotes the weak
derivative along $h$ direction. Let $D_{h} v(r_{i}) \to \zeta_{h}$ in
$L^{2}(D(b))$. Then for each $\varphi \in C^{\infty}_{c}(D(b))$,
\[ 
  \langle \zeta_{h}, \varphi \rangle
  = \lim_{i} \langle D_{h} v(r_{i}), \varphi \rangle
  = \lim_{i} \langle v(r_{i}), D_{h} \varphi \rangle
  = \langle v_{j}, D_{h} \varphi \rangle,
\]
which shows that $\zeta_{h} = D_{h}v_{j}$ and $v_j \in H_b$.

\bigskip\noindent\emph{Step~$3$.} Claim that $v_{j}$ is the eigenfunction of
$\widetilde{L}_{t}$ with the eigenvalue $\alpha_{j}$. For each $l > 0$,
$v(r_{l}), v(r_{l+1}), \ldots \in H_{r_{l}}$. The limit $v_{j}$ is therefore in
$H_{r_{l}}$ for any $r_{i}$. By Theorem~S, $v_{j} \in \bigcap_{l=1}^{\infty}
H_{r_{l}} = \bigcap_{r=1}^{\infty} H_{r} = H_{t}$, since $H_{s} \subset H_{r}$
for $s \leq r$. Let $H^{2}_{t}$ be the closure of $\mathcal{G}(D(t))$ in
$W^{2,2}(D(t))$. For any $w \in H^{2}_{t}$, we see that
\begin{align*} 
  \langle v_{j}, \widetilde{L}_{t}w \rangle
  &= \lim_{i} \langle v(r_{i}), \widetilde{L}_{t}w \rangle
  = \lim_{i} \widetilde{I}_{t}(v(r_{i}),w) \\
  &= \lim_{i} \widetilde{\lambda}_{j}(r_{i}) \langle v(r_{i}), w \rangle
  = \alpha_{j} \langle v_{j}, w \rangle,
\end{align*}
which means that $I_{t}(v_{j},\varphi) = \alpha_{j} \langle v_{j}, \varphi
\rangle$, $\forall\, \varphi \in H_{t}$, since $H^{2}_{t}$ is dense in $H_{t}$.
Therefore,
\[ 
  \widetilde{L}_{t} v_{j} = \alpha_{j} v_{j},
\]
$\alpha_{j}$ being an eigenvalue of $\widetilde{L}_{t}$ on $H_{t}$.

\bigskip\noindent\emph{Step~$4$.} Now claim that $\alpha_{j}$ is the
\emph{$j$-th} eigenvalue of $\widetilde{L}_{t}$ on $H_{t}$. Clearly,
$\widetilde{\lambda}_{j}(t) \geq \lim_{i} \widetilde{\lambda}_{j}(r_{i}) =
\alpha_{j}$. Suppose $\widetilde{\lambda}_{1}(t) > \alpha_{1}$. As $\alpha_{1}$
is an eigenvalue of $\widetilde{L}_{t}$, $\alpha_{1}$ must be some
$\widetilde{\lambda}_{h}(t)$ with $h > 1$. Thus we have
\[
  \widetilde{\lambda}_{1}(t)
  \leq \widetilde{\lambda}_{h}(t)
  = \alpha_{1}
  \lneqq \widetilde{\lambda}_{1}(t),
\]
a contradiction. Hence $\widetilde{\lambda}_{1}(t) = \alpha_{1}$. Inductively,
we see $\widetilde{\lambda}_{j} = \alpha_{j}$. Thus
\[ 
  \lim_{r_{i} \searrow t} \widetilde{\lambda}_{j}(r_i)
  = \alpha_{j}
  = \widetilde{\lambda}_{j}(t),
\]
which shows the right-continuity.

\bigskip\noindent\emph{Step~$5$.} (Left-continuity) Let $\{u_{k}\}$ be the set
of eigenfunctions defined in Step~1. Given $k \in \mathbb{N}$. Consider $j =
1,2,\ldots,k$. By the first equality of Theorem~S, i.e., $\overline{\bigcup_{s
< t} H_{s}} = H_{t}$, there exist $s_{1}, s_{2}, \ldots \nearrow t$ and
$\overline{v}_{j}(s_{i}) \in H_{s_{i}} \subset H_{t}$, $\forall\, i =
1,2,\ldots$, such that $\overline{v}_{j}(s_{i}) \to u_{j}$ in $H_{t} \subset
H_{b}$ as $i \to \infty$. Using min-max principle~\eqref{e2.13}, we see that
\begin{align*}
  \widetilde{\lambda}_{k}(s_{i})
  &= \min_{V^{k} \subset H_{s_{i}}}
    \{ \max_{v \in V^{k} \cap S} \widetilde{I}(v,v) \}
    \qquad\qquad \textrm{(where $\dim V^{k} = k$)} \\
  &\leq \max_{v \in \langle \overline{v}_{1}, \ldots, \overline{v}_{k} \rangle
    \cap S} \: \widetilde{I}(v,v) \qquad\qquad\quad\;
    \textrm{(where $\overline{v}_{j} \equiv \overline{v}_{j}(s_{i})$)} \\
  &\xrightarrow{\textrm{as } i \to \infty}
    \max_{u \in \langle u_{1}, \ldots, u_{k} \rangle \cap S} \:
    \widetilde{I}(u,u)
    = \widetilde{\lambda}_{k}(t)
    \leq \widetilde{\lambda}_{k}(s_{i}), \quad \forall\, i,
\end{align*}
noting that $S$ is the unit sphere in $H_{s_i}$ or in $H_{t}$,  and
$\widetilde{I}_{s} = \widetilde{I}_{b}$, $\forall\, s \leq b$. Therefore
\[ 
  \lim_{s_{i} \nearrow t} \widetilde{\lambda}_{k}(s_{i})
  = \widetilde{\lambda}_{k}(t).
\]
The left-continuity is proved.

\bigskip\noindent\emph{Step~$6$.} (Strictness) It remains to show
\[
  s < t \;\; \Rightarrow \;\;
  \widetilde{\lambda}_{k}(s) \gneqq \widetilde{\lambda}_{k}(t),
    \;\forall\, k \in \mathbb{N}.
\]
Suppose $\exists\, k > 0$ with $\widetilde{\lambda}_{k}(s) =
\widetilde{\lambda}_{k}(t) \equiv \lambda$ for $s < t \leq b$. We shall
construct $u \in H_{s} \subset H_{t}$ such that
\[ 
  \widetilde{L}_{s}u = \lambda u \quad \textrm{and} \quad
  \widetilde{L}_{t}u = \lambda u,
\]
which is against $D(s) \subsetneqq D(t)$. Let $u_{1}, u_{2}, \ldots$ and
$v_{1}, v_{2}, \ldots$ be orthonormal bases of eigenfunctions of
$\widetilde{L}_{s}$ on $H_{s}$ and of $\widetilde{L}_{t}$ on $H_{t}$,
respectively. Choose $u = a_{1} u_{1} + \cdots + a_{k} u_{k} \in H_{s} \subset
H_{t}$, with $\|u\| = 1$, such that $\langle \: u,v_{1} \: \rangle = \cdots =
\langle \: u,v_{k-1} \: \rangle = 0$. Clearly, such $a_{i}$'s are solvable, and
$u \in C_{0}^{\infty}(D(s))$. Write
\[ 
  u = \langle \: u, v_{k} \: \rangle \: v_{k}
    + \langle \: u,v_{k+1} \: \rangle \: v_{k+1} + \cdots.
\]
Then
\begin{equation} \label{e4.4}
\begin{split}
  \langle \: \widetilde{L}_{t}u, u \: \rangle
  &= \widetilde{\lambda}_{k}(t) \: \langle \: u,v_{k} \: \rangle^{2}
    + \widetilde{\lambda}_{k+1}(t) \: \langle \: u, v_{k+1} \: \rangle^{2}
    + \cdots \\
  &\geq \widetilde{\lambda}_{k}(t) \:(\:\langle \: u,v_{k} \: \rangle^{2}
    + \langle \: u,v_{k+1} \: \rangle^{2} + \cdots \:)
  = \widetilde{\lambda}_{k}(t)
  = \lambda.
\end{split}
\end{equation}
But
\begin{equation} \label{e4.5}
\begin{split}
  \langle \: \widetilde{L}_{s}u, u \: \rangle
  &= \widetilde{\lambda}_{1}(s) a_{1}^{2} + \cdots
    + \widetilde{\lambda}_{k}(s) a_{k}^{2} \\
  &\leq \widetilde{\lambda}_{k}(s) \:(\: a_{1}^{2} + \cdots + a_{k}^{2} \:)
  = \widetilde{\lambda}_{k}(s)
  = \lambda.
\end{split}
\end{equation}
Given $w \in H_{t}$, we know
\[ 
  \langle \: \widetilde{L}_{s}u, w \: \rangle
  = \langle \: \widetilde{L}_{t}u, w \: \rangle - E
  \neq \langle \: \widetilde{L}_{t}u, w \: \rangle,
\]
in general, although $\widetilde{I}_{s}(u,w) = \widetilde{I}_{t}(u,w)$. Here
$E$ is the boundary term $\int_{\partial D(s)} (D_{\nu} u) w$. By choosing $w =
u \in H_{s} \subset H_{t}$, we have $w \equiv 0$ on $\partial D(s)$ and hence
$E = 0$. Thus
\begin{equation} \label{e4.6}
  \langle \: \widetilde{L}_{s}u, u \: \rangle
  = \langle \: \widetilde{L}_{t}u, u \: \rangle.
\end{equation}
A combination of \eqref{e4.4}, \eqref{e4.5} and \eqref{e4.6} yields that
$\lambda \geq \langle \: \widetilde{L}_{s}u, u \: \rangle = \langle \:
\widetilde{L}_{t}u, u \: \rangle \geq \lambda$. It forces all the inequalities
in \eqref{e4.4} and \eqref{e4.5} to be equalities. Hence
$\widetilde{\lambda}_{1}(s) = \widetilde{\lambda}_{2}(s) = \cdots =
\widetilde{\lambda}_{k}(s) = \lambda$, and $\widetilde{L}_{s}u = \lambda u$. On
the other hand, $\widetilde{\lambda}_{l}(t) \to \infty$ as $l \to \infty$.
Looking at \eqref{e4.4}, there is an $m \geq 0$ such that $u = \langle \:
u,v_{k} \: \rangle \: v_{k} + \cdots + \langle \: u,v_{k+m} \: \rangle \:
v_{k+m}$, and each of $v_{k}, \ldots, v_{k+m}$ has the same eigenvalue
$\lambda$. Thus
\[ 
  \widetilde{L}_{t}u = \lambda u.
\]
Now $u \in H_{s} \subset H_{t}$, $u \equiv 0$ on $D(t) - D(s)$. By the
assumption that $D(t) - D(s) \neq \phi$, there is a nonempty open set $U$
contained in $D(t) - D(s)$. Using Hopf's sphere theorem, we see that $u \equiv
0$ on $D(t)$, contradicting $\|u\| = 1$.
\end{proof}

As Theorem~B and Theorem~S have been established, the proof of the global
Morse index theorem, i.e., Theorem~A, is completed. Remark that we have
answered Problem~\ref{Prob2}, raised in the preface of the paper.

\section{Distribution of Jacobi fields} \label{S5}
We return to Problem~\ref{Prob1} raised in the preface of the paper, and
present a substantial application of the global Morse index theorem to detect
the existence of Jacobi fields on CMC surfaces.

\subsection{Unstable cones} \label{S5.1}
Given a $C^{0}$-monotone continuum $\mathcal{D} = \{ D(t) \subset M^{n} \:;\: t
\in [0,b] \}$ defined by Definition~\ref{D2.7}, we consider the ambient space
$E_{b} \equiv E(D(b))$ of all Sobolev variation functions on $D(b)$, noting
that $E_{t} \equiv E(D(t)) \subset E_{b}$, $\forall\, t \leq b$, the inclusion
is defined in the second paragraph of \S\ref{S2.1}. Clearly, $H_{b} \equiv
H(D(b)) \subset E_{b}$ is a hyperplane of $E_{b}$, and $H_{t} \equiv H(D(t)) =
E_{t} \cap H_{b}$. For the unstable cones in the Sobolev framework, we define
for example $\Lambda(D) := \{ f \in E(D) \:;\: I(f,f) \leq 0 \}$ (see \eqref{e2.7}, \eqref{e2.8}). Let
$u_{1},u_{2},u_{3},\ldots$ be the eigenfunctions of $L$, which constitute an
orthonormal basis of $E(D)$, i.e., $Lu = \lambda_{k} u_{k}$, $k = 1,2,3,\ldots$
with $\lambda_{1} \leq \lambda_{2} \leq \lambda_{3} \leq \cdots \to \infty$.
Given $f \in E(D)$, we write  $f = a_{1} u_{1} + a_{2} u_{2} + \cdots$, then $f
\in \Lambda(D)$ if and only if
\begin{equation} \label{e5.1}
  \lambda_{1} a^{2}_{1} + \lambda_{2} a^{2}_{2} + \cdots \leq 0.
\end{equation}
The last inequality~\eqref{e5.1} together with $\int_{D} f = 0$ is the
criterion for $f \in \widetilde{\Lambda}(D) \equiv \Lambda(D) \cap H(D)$. We
see that $\Lambda(t) \equiv \Lambda(D(t)) \subset E_{t} \subset E_{b}$,
$\widetilde{\Lambda}(t) \equiv \Lambda(t) \cap H_{t} \subset H_{b}$. They
enlarge in $E_{b}$, as $t$ increases, i.e.,
\[ 
  s \leq t \;\; \Rightarrow \;\;
  \Lambda(s) \subset \Lambda(t) \;\; \textrm{and} \;\;
  \widetilde{\Lambda}(s) \subset \widetilde{\Lambda}(t).
\]
Let $t_{k}$ denote $t \in [0,b]$ for which $\lambda_{k} = \lambda_{k}(t) \equiv
\lambda_{k}(D(t)) = 0$. We also write $D(t_{k})$ by $D[\lambda_{k} = 0]$. For
$0 < t < t_{1}$, we have $\Lambda(t) = \{0\}$, since \eqref{e5.1} and
$\lambda_{k}(t) > 0$, $\forall\, k \geq 1$. As $t = t_{1}$, $D(t_{1}) =
D[\lambda_{1} = 0]$ is an extremal domain defined in \cite{HL98}, and
\[ 
  \Lambda(t_{1})
  = \textrm{ the linear span } \langle u_{1} \rangle
  = \textrm{ $1$-dim. subspace of } E_{t_{1}}
  \subset E_{b}.
\]
Evidently, $\Lambda(t_{1}) \cap H_{b} = {0}$, because the first eigenfunction
$u_{1}$ has constant sign on $D(t_{1})$ and hence $\int_{D(t_{1})} u_{1} \neq
0$ --- noting that $u_{k}$ are defined zero on $D(b) - D(t_{1})$, for $k =
1,2,3,\ldots$. When $t$ increases from $t_{1}$, $\Lambda(t)$ may touch $H_{b}$
at a (nontrivial) Jacobi field $\varphi \in H_{t} \subset H_{b}$ (see
Proposition~\ref{P1.7}). The existence of $\varphi$ can be shown by the
compactness of $\Lambda^{R}(t) := \{ f \in \Lambda(t) \:;\: \|f\|_{L^{2}} \leq
R \}$. However, a rigorous argument of the existence will be included later in
Theorem~J. Writing $t = c$ at the moment of contact, $D(c)$ is a
``critical" domain that $D(t)$ changes from being stable to unstable when $t$
increases across $c$ (see Theorem~\ref{T1.9}). The domain $D(c)$ has the
so-called ``first conjugate boundary" and there appears a (nontrivial) Jacobi
field on $D(c)$ (see Definition~\ref{D1.3}).

\begin{prop} \label{P5.1}
\begin{equation} \label{e5.2}
  D[\lambda_{1} = 0] \subsetneqq D(c) \subsetneqq D[\lambda_{2} = 0].
\end{equation}
\end{prop}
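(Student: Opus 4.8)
The plan is to translate \eqref{e5.2} into two strict inequalities between the parameters at which $\lambda_{1}$, $\widetilde{\lambda}_{1}$, $\lambda_{2}$ vanish and to extract these from Theorem~B. By Theorem~B the functions $t\mapsto\lambda_{1}(t)$, $t\mapsto\widetilde{\lambda}_{1}(t)$, $t\mapsto\lambda_{2}(t)$ are continuous and strictly decreasing, so $t_{1}$ (with $\lambda_{1}=0$), $c$ (the first conjugate boundary; by the spectral theorem of \S\ref{S2.2} this is the first $t$ at which $\widetilde{\lambda}_{1}(t)=0$, so $\widetilde{\lambda}_{1}(t)>0$ for $t<c$), and $t_{2}$ (with $\lambda_{2}=0$) are each uniquely determined, and by monotonicity of $\mathcal{D}$ one has $D(s)\subsetneqq D(t)$ iff $s<t$. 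Hence \eqref{e5.2} is equivalent to $t_{1}<c<t_{2}$. (The weak bounds $t_{1}\le c\le t_{2}$ are immediate from the intertwining $\lambda_{1}\le\widetilde{\lambda}_{1}\le\lambda_{2}$ of Barbosa--B\'erard \cite{BB00}; the content below is the strictness.)

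\emph{Left inclusion $D[\lambda_{1}=0]\subsetneqq D(c)$.} I would prove $\widetilde{\lambda}_{1}(t_{1})>0$. Let $\{u_{k}\}$ be an $L^{2}$-orthonormal basis of $E(D(t_{1}))$ by eigenfunctions of $L$ on $D(t_{1})$; since $D(t_{1})$ is extremal, $\lambda_{1}(t_{1})=0$, and since the first eigenvalue is simple, $\lambda_{2}(t_{1})>0$. For $0\ne f\in H_{t_{1}}$ write $f=\sum_{k}a_{k}u_{k}$; then, using \eqref{e2.8},
\[
  \widetilde{I}_{t_{1}}(f,f)=\sum_{k}\lambda_{k}(t_{1})\,a_{k}^{2}=\sum_{k\ge 2}\lambda_{k}(t_{1})\,a_{k}^{2}\ \ge\ \lambda_{2}(t_{1})\sum_{k\ge 2}a_{k}^{2}\ \ge\ 0,
\]
and equality throughout forces $a_{k}=0$ for all $k\ge 2$, i.e.\ $f=a_{1}u_{1}$; but $u_{1}$ has constant sign on the connected domain $D(t_{1})$, so $\int_{D(t_{1})}f=a_{1}\int_{D(t_{1})}u_{1}\ne 0$ unless $a_{1}=0$, contradicting $f\in H_{t_{1}}\setminus\{0\}$. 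Thus $\widetilde{I}_{t_{1}}(f,f)>0$ for every nonzero $f\in H_{t_{1}}$, whence $\widetilde{\lambda}_{1}(t_{1})>0$ by the spectral theorem; since $\widetilde{\lambda}_{1}$ is strictly decreasing and $\widetilde{\lambda}_{1}(c)=0$, this gives $c>t_{1}$. (Equivalently, $\widetilde{\Lambda}(t_{1})=\langle u_{1}\rangle\cap H_{b}=\{0\}$, as observed in \S\ref{S5.1}.)

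\emph{Right inclusion $D(c)\subsetneqq D[\lambda_{2}=0]$.} I would prove $\widetilde{\lambda}_{1}(t_{2})<0$. Let $\{u_{k}\}$ now be eigenfunctions of $L$ on $D(t_{2})$; since $t_{2}>t_{1}$, $\lambda_{1}(t_{2})<\lambda_{1}(t_{1})=0$ and $\lambda_{2}(t_{2})=0$. As $u_{1}$ has constant sign, $\int_{D(t_{2})}u_{1}\ne 0$; provided also $\int_{D(t_{2})}u_{2}\ne 0$, put $f:=u_{1}-\bigl(\int u_{1}/\!\int u_{2}\bigr)\,u_{2}$, so that $\int_{D(t_{2})}f=0$ and $f\in H_{t_{2}}\setminus\{0\}$, and then by \eqref{e2.8},
\[
  \widetilde{I}_{t_{2}}(f,f)=\lambda_{1}(t_{2})+\Bigl(\tfrac{\int u_{1}}{\int u_{2}}\Bigr)^{2}\lambda_{2}(t_{2})=\lambda_{1}(t_{2})<0.
\]
Hence $\widetilde{\lambda}_{1}(t_{2})<0$, and strict monotonicity with $\widetilde{\lambda}_{1}(c)=0$ gives $c<t_{2}$, completing \eqref{e5.2}.

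\emph{Main obstacle.} The above settles everything except the degenerate case $\int_{D(t_{2})}u_{2}=0$, i.e.\ the second Dirichlet eigenfunction on $D[\lambda_{2}=0]$ having vanishing mean: there $u_{2}\in H_{t_{2}}$ and $\widetilde{L}_{t_{2}}u_{2}=Lu_{2}-|D(t_{2})|^{-1}\!\int Lu_{2}=\lambda_{2}(t_{2})u_{2}=0$, so $u_{2}$ is already a Jacobi field on $D(t_{2})$ and the test function only yields $\widetilde{\lambda}_{1}(t_{2})\le 0$, i.e.\ a priori $c\le t_{2}$. This is the one delicate point. I expect to remove it by passing to $t\nearrow t_{2}$: the map $t\mapsto\int_{D(t)}u_{2}^{(t)}$ is continuous, and at any $t<t_{2}$ where it is nonzero the construction above applied on $D(t)$ produces a mean-zero direction with $\widetilde{I}_{t}<0$, hence $\widetilde{\lambda}_{1}(t)<0$ and therefore $c<t<t_{2}$; the residual possibility that this integral vanishes identically on a left-neighbourhood of $t_{2}$ is highly non-generic and must be excluded by a separate argument (or by a transversality hypothesis on $\mathcal{D}$). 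Making this rigorous is the only nontrivial step; the rest is the bookkeeping above.
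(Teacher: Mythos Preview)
Your approach is essentially the paper's, recast in the language of the twisted first eigenvalue $\widetilde{\lambda}_{1}$ rather than of the unstable cones $\widetilde{\Lambda}$, $\widetilde{\Lambda}_{-}$. For the first strict inclusion the paper simply observes $\widetilde{\Lambda}(t_{1})=\{0\}\subsetneqq\langle\varphi\rangle\subset\widetilde{\Lambda}(c)$, which is your computation that $\widetilde{I}_{t_{1}}(f,f)>0$ for every nonzero $f\in H_{t_{1}}$.

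For the second inclusion the paper builds the \emph{same} two-dimensional test function but with the normalization swapped: it takes $v=\alpha u_{1}+u_{2}$ with $\alpha=-\int u_{2}\big/\int u_{1}$, then asserts $I(v,v)=\lambda_{1}(t_{2})\,\alpha^{2}<0$, giving $v\in\widetilde{\Lambda}_{-}(t_{2})$ and hence $\widetilde{\Lambda}_{-}(c)=\{0\}\subsetneqq\widetilde{\Lambda}_{-}(t_{2})$. This normalization avoids your division by $\int u_{2}$, but the degenerate case you isolated survives: if $\int_{D(t_{2})}u_{2}=0$ then $\alpha=0$, $v=u_{2}$, and $I(v,v)=0$, so $v\notin\widetilde{\Lambda}_{-}(t_{2})$ and the chain yields only $c\le t_{2}$. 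The paper does not address this; it writes ``$\lambda_{1}\alpha^{2}<0$'' as though $\alpha\neq 0$ automatically. So the obstacle you flagged is genuine and is glossed over in the paper's own argument. Your continuity idea (tracking $t\mapsto\int u_{2}^{(t)}$) already goes further than the paper, though as you say it does not yet rule out identical vanishing on a left-neighbourhood of $t_{2}$.
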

\begin{proof}
Evidently, $\widetilde{\Lambda}(t_{1}) = \{0\} \subsetneqq \langle \varphi
\rangle \subset \widetilde{\Lambda}(c)$, where $\varphi$ is the (nontrivial)
Jacobi field on $D(c)$. This shows the first “$\subsetneqq$” of \eqref{e5.2}.
We can find a nonzero $v = \alpha u_{1} + \beta u_{2} \in
\widetilde{\Lambda}_{-}(t_{2}) \subset H_{t_{2}} \subset H_{b}$. In fact,
$\int_{D} u_{1} \neq 0$ and hence we may choose $\beta = 1$, $\alpha =
-\int_{D} u_{2} \mathbin{/} \int_{D} u_{1}$, with $\int_{D} v = 0$ where $D =
D(t_{2})$. On the other hand, $\lambda_{1}(t_{2}) < 0 = \lambda_{2}(t_{2})$ on
$D(t_{2})$. Thus $I(v,v) = \lambda_{1} \alpha^{2} < 0$ and $v \in
\widetilde{\Lambda}_{-}(t_{2})$. Hence
\[ 
  \widetilde{\Lambda}_{-}(c)
  = \{0\}
  \subsetneqq \langle v \rangle
  \subset \widetilde{\Lambda}_{-}(t_{2}),
\]
which shows the second ``$\subsetneqq$".
\end{proof}

\subsection{Distribution theorem} \label{S5.2}
Consider the eigenvalues $\lambda_{j} = \lambda_{j}(t)$ of $L$ on $E_{t}$:
\begin{equation} \label{e5.3}
  \lambda_{1} < \lambda_{2} \leq \lambda_{3} \leq \cdots \leq \lambda_{l} < 0
  \leq \lambda_{l+1} \leq \cdots \to \infty.
\end{equation}
Relabel the indices $j$ of $\lambda_{j}$, such that the adjacent eigenvalues
are distinct, i.e., \eqref{e5.3} becomes
\begin{equation} \label{e5.4}
  \overline{\lambda}_{1} < \overline{\lambda}_{2} < \overline{\lambda}_{3}
  < \cdots < \overline{\lambda}_{k} < 0 \leq \overline{\lambda}_{k+1} < \cdots
  \to \infty,
\end{equation}
each $\overline{\lambda}_{h} = \overline{\lambda}_{h}(t)$ having multiplicity
$m_{h}$, for any $h \in \mathbb{N}$. Let
\begin{equation} \label{e5.5}
  n_{k} \equiv m_{1} + m_{2} + \cdots + m_{k}.
\end{equation}
Note that $l = n_{k}$, where $l$, $k$ and $n_{k}$ are respectively given in
\eqref{e5.3}, \eqref{e5.4} and \eqref{e5.5}. Let $\overline{t}_{h}$ denote $t
\in (0,b]$ such that $\overline{\lambda}_{h}(t) = 0$, i.e.,
\[ 
  D(\overline{t}_{h}) = D[\overline{\lambda}_{h} = 0],
    \quad \forall\, h \in \mathbb{N}.
\]
We ask whether Proposition~\ref{P5.1} extends to any adjacent
$\overline{\lambda}_{h-1}$ and $\overline{\lambda}_{h}$ for each $h > 1$?

\begin{thmJ}[Distribution of Jacobi fields along $t$-axis] \label{ThmJ}
Given $M^{n}$ a CMC hypersurface immersed in $\mathbb{R}^{n+1}$, let
$\mathcal{D} = \{ D(t) \subset M^{n} \:;\: t \in [0,b] \}$ be a
$C^{0}$-monotone continuum of generalized Lipschitz domains in $M^{n}$ (see
Definitions~\textup{\ref{D2.2}} and \textup{\ref{D2.7}}). For each $k > 1$,
there exist (nontrivial) Jacobi fields on $D(t)$ with some $t \in
[\overline{t}_{k-1},\overline{t}_{k}]$, i.e.,
\begin{equation} \label{e5.6}
  D[\overline{\lambda}_{k-1} = 0]
  \subset D(t)
  \subset D[\overline{\lambda}_{k} = 0].
\end{equation}
The multiplicity of the independent Jacobi fields on $D(t)$ satisfying
\eqref{e5.6} is given as follows. Let $\mu(T)$ denote the total number of
independent Jacobi fields on $D(t)$ with $t \in T \subset [0,b]$. Then,
\begin{equation} \label{e5.7}
  1
  \leq m_{k-1} + m_{k} - 1
  \leq \mu[\overline{t}_{k-1},\overline{t}_{k}]
  \leq m_{k-1} + m_{k} + 1,
\end{equation}
where $m_{k}$ is the multiplicity of $\overline{\lambda}_{k}$ and $T$ is the
closed interval $[\overline{t}_{k-1},\overline{t}_{k}]$. Furthermore, we have
\begin{gather}
\label{e5.8}
  m_{k}-1 \leq \mu(\overline{t}_{k-1},\overline{t}_{k}] \leq m_{k}+1, \\
\label{e5.9}
  m_{k}-1 \leq \mu \{\overline{t}_{k}\} \leq m_{k}+1.
\end{gather}
\end{thmJ}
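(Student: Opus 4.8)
The plan is to deduce Theorem~J from the global Morse index theorem (Theorem~A) together with the eigenvalue continuity and strictness (Theorem~B). The key observation is that the ``Jacobi count'' $\widetilde\nu(t)$, which by Remark~\ref{R2.12} is exactly the multiplicity of the zero eigenvalue of $\widetilde L_t$ on $H_t$, is tied to the eigenvalues $\lambda_k(t)$ of the \emph{Dirichlet} operator $L$ on $E_t$ through the Barbosa--B\'erard intertwining $\lambda_k \le \widetilde\lambda_k \le \lambda_{k+1}$ quoted in the introduction. So first I would record the precise relation between zeros of $\widetilde\lambda_j(t)$ and zeros of $\overline\lambda_h(t)$: at $t=\overline t_k$, where $\overline\lambda_k(t)=0$ with multiplicity $m_k$, the intertwining forces a block of the $\widetilde\lambda_j$ to pass through $0$, and the number of such vanishing twisted eigenvalues at $\overline t_k$ is $m_k$, $m_k-1$, or $m_k+1$ (the $\pm 1$ coming from whether $\widetilde\lambda$ ``uses up'' a slot at the boundary of the block). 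This is exactly the source of the $\pm1$ slack in \eqref{e5.7}--\eqref{e5.9}.

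Next I would set up the bookkeeping. By Theorem~B the $\widetilde\lambda_j(t)$ are continuous and strictly decreasing in $t$, so each one crosses zero exactly once, at a well-defined parameter value; call these crossing times $\widetilde t_1 \le \widetilde t_2 \le \cdots$. By the Morse index formula \eqref{e2.17}, $\widetilde i(r)=\sum_{0\le t<r}\widetilde\nu(t)$, and $\widetilde\nu(t)>0$ precisely at the $\widetilde t_j$; moreover $\mu(T)=\sum_{t\in T}\widetilde\nu(t)$ counts independent Jacobi fields on $\{D(t):t\in T\}$. Thus $\mu[\overline t_{k-1},\overline t_k]$ equals the number of twisted-eigenvalue crossings in that interval, and similarly $\mu(\overline t_{k-1},\overline t_k]$, $\mu\{\overline t_k\}$ count crossings in the half-open interval and at the single point $\overline t_k$ respectively. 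So the whole theorem reduces to: \emph{count how many $\widetilde t_j$ lie in} $[\overline t_{k-1},\overline t_k]$, $(\overline t_{k-1},\overline t_k]$, and $\{\overline t_k\}$. For the last one, the count is the number of $\widetilde\lambda_j$ vanishing at $\overline t_k$, which by the block argument above is in $[m_k-1,m_k+1]$, giving \eqref{e5.9}. For the half-open interval, one adds to this the twisted crossings strictly between $\overline t_{k-1}$ and $\overline t_k$; using strict monotonicity of both families and the intertwining $\lambda_k\le\widetilde\lambda_k\le\lambda_{k+1}$, on the open interval $(\overline t_{k-1},\overline t_k)$ no $\overline\lambda_h$ vanishes, so the twisted eigenvalues that vanish there must be sandwiched against a Dirichlet eigenvalue that does not change sign — a more careful look shows exactly $m_k-1$ or $m_k$ such crossings occur there minus/plus the boundary correction, yielding \eqref{e5.8}. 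Finally \eqref{e5.7} follows by adding the $\overline\lambda_{k-1}$-block contribution at the left endpoint $\overline t_{k-1}$, which contributes between $m_{k-1}-1$ and $m_{k-1}+1$; combined with $\mu(\overline t_{k-1},\overline t_k]\in[m_k-1,m_k+1]$ and the fact that the two $\pm1$ corrections at $\overline t_{k-1}$ partially cancel (one twisted eigenvalue cannot be counted both as the ``top'' of the $(k-1)$-block and the ``bottom'' of the $k$-block), one gets the range $[m_{k-1}+m_k-1,\ m_{k-1}+m_k+1]$. The leftmost inequality $1\le m_{k-1}+m_k-1$ is immediate since $m_{k-1},m_k\ge1$, and gives the nonemptiness assertion \eqref{e5.6}: at least one Jacobi field appears on some $D(t)$ with $t\in[\overline t_{k-1},\overline t_k]$.

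The existence half of the theorem — that \eqref{e5.6} is realized, i.e. that $\widetilde\Lambda(t)$ actually ``touches'' $H_b$ at a nontrivial Jacobi field for some $t$ in the stated range — should be argued as in the discussion preceding Proposition~\ref{P5.1}: one exhibits, at $t=\overline t_{k-1}$, an element of $\widetilde\Lambda_-$ of one lower dimension than at $t=\overline t_k$ (using that a suitable combination $\alpha u_1+\cdots$ of Dirichlet eigenfunctions with $\int_D(\cdot)=0$ lands in the twisted unstable cone once the corresponding $\lambda$'s turn negative), so the twisted index $\widetilde i(t)$ strictly increases across $[\overline t_{k-1},\overline t_k]$; by \eqref{e2.17} this increase is accounted for by a positive $\widetilde\nu$ somewhere in the interval, which is a Jacobi field. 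I expect the main obstacle to be \textbf{pinning down the $\pm1$ corrections precisely}: the intertwining $\lambda_k\le\widetilde\lambda_k\le\lambda_{k+1}$ is a static (fixed-$t$) inequality, and translating it into an exact count of how the twisted crossing times $\widetilde t_j$ interlace the Dirichlet crossing times $\overline t_h$ — in particular handling the degenerate possibilities where $\widetilde\lambda_j(\overline t_k)=0$ coincides with $\lambda_k(\overline t_k)=0$ or with $\lambda_{k+1}(\overline t_k)=0$ — requires a careful case analysis of the boundary of each eigenvalue block, and this is where the upper and lower bounds in \eqref{e5.7}--\eqref{e5.9} genuinely come from rather than from any crude estimate.
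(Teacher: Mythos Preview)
Your proposal is correct in outline and arrives at the right bounds, but it takes a genuinely different route from the paper's proof.

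The paper does \emph{not} use the Barbosa--B\'erard intertwining $\lambda_k\le\widetilde\lambda_k\le\lambda_{k+1}$ at all. Instead it proves a short \textbf{Lemma~D}: since $H_t$ is a codimension-$1$ hyperplane of $E_t$, the maximal negative subspace for $\widetilde I$ has dimension $\widetilde i(t)\in\{i(t)-1,\,i(t)\}$. This is a two-line linear-algebra argument (intersect the span $\langle u_1,\dots,u_l\rangle$ with the constraint $\int_D f=0$). Then the paper works with \emph{cumulative} counts: by Theorem~A, $\mu(0,\overline t_k]=\widetilde i(\overline t_k^{\,+})$, and by Theorem~B, $i(\overline t_k^{\,+})=n_k=m_1+\cdots+m_k$; hence $\mu(0,\overline t_k]\in[n_k-1,n_k]$. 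All three inequalities \eqref{e5.7}--\eqref{e5.9} then fall out by subtracting two such cumulative estimates (e.g.\ $\mu(\overline t_{k-1},\overline t_k]=\mu(0,\overline t_k]-\mu(0,\overline t_{k-1}]\in[m_k-1,m_k+1]$), with no case analysis whatsoever.

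Your approach, by contrast, tracks each twisted eigenvalue $\widetilde\lambda_j$ individually and counts its zero-crossing directly via the intertwining. This works, and your observation that the ``$\pm1$ corrections partially cancel'' is exactly right (it is the single eigenvalue $\widetilde\lambda_{n_{k-1}}$, sandwiched in $[\lambda_{n_{k-1}},\lambda_{n_{k-1}+1}]$, that must cross zero exactly once in $[\overline t_{k-1},\overline t_k]$). But the bookkeeping is more delicate than you suggest---your sentence about ``exactly $m_k-1$ or $m_k$ such crossings'' on the open interval is not quite accurate (at most \emph{one} twisted eigenvalue can cross there)---and you import the intertwining as a black box whereas the paper gets by with the weaker, self-contained Lemma~D. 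The paper's cumulative-index trick buys you a cleaner proof with no boundary case analysis; your crossing-by-crossing count buys a more explicit picture of \emph{which} $\widetilde\lambda_j$ contribute where.
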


\begin{rmk} \label{R5.2}
The statement~\eqref{e5.6} of Theorem~J is still valid, even when the adjacent
eigenvalues are not distinct, i.e., when $t_{h-1} = t_{h}$. Namely, let
$\{ \overline{\lambda}_{k}, \overline{t}_{k} \}$ be replaced by $\{
\lambda_{h}, t_{h} \}$ of \eqref{e5.3} with $h > 1$, there exist independent
Jacobi fields on $D(t)$ such that
\begin{equation} \label{e5.10}
  D[\lambda_{h-1} = 0] \subset D(t) \subset D[\lambda_{h} = 0]
\end{equation}
with multiplicity at least $m-1$, whether $t_{h-1}$ and $t_{h}$ are distinct or
not. Here $m$ is the multiplicity of $\lambda_{h} = 0$. For example, if
$\lambda_{h} = 0$ has multiplicity $5$, then there are at least $4$ independent
Jacobi fields on $D(t)$ with $D(t) = D[\lambda_{h-1} = 0] = D[\lambda_{h} =
0]$. The assertion~\eqref{e5.10} follows from \eqref{e5.9}, in which $m_{k}-1 =
m-1 > 0$, and from \eqref{e5.7}.
\end{rmk}

\begin{ex} \label{E5.1}
Consider the simplest case, that $\dim M^{n} = n = 1$. For $x \in \mathbb{R}$,
let $\varphi = \varphi(x) = e^{ix} \in$ the unit circle $S^{1} \subset
\mathbb{C}$ define an immersion from $\mathbb{R}^{1}$ into $S^{1}$. Consider on
$M^{1} \equiv \mathbb{R}^{1}$ a $C^0$-monotone continuum $\mathcal{D}_{1} = \{
D(t) \subset M^{1} \:;\: t \in (0,\infty) \}$, where each $D(t)$ is the open
interval $(0,t)$ in $\mathbb{R}^{1}$.

\medskip\noindent (i) Find $t = \overline{t}_{k}$ such that
$D(\overline{t}_{k}) = D[\overline{\lambda}_{k} = 0]$ for some $k > 1$.
Equation~\eqref{e2.7} is reduced to
\[ 
  Lf = -f'' - f, \quad f = f(x), \quad x \in D(t), \; f \in E(D(t)),
\]
where $|B|^{2} = 1$ on $S^{1}$, and \eqref{e1.3} means now
\[ 
  u''_{k} = -(1+\overline{\lambda}_{k}) u_{k}, \quad k = 1,2,3,\ldots
\]
with $u_{k} \in \mathcal{F}(D(t))$. It is easy to see that for a fixed $t$, the
eigenfunction $u_{k}$ on $D(t)$ is given by
\[ 
  u_{k}(x) = c_{k} \sin \left( \frac{k\pi}{t} \right) x,
  \quad \textrm{and} \quad
  \overline{\lambda}_{k} = \frac{k^{2} \pi^{2}}{t^{2}} - 1.
\]
Hence $m_{k} = 1$, $\overline{\lambda}_{k} = \lambda_{k}$ and $\overline{t}_{k}
= t_{k}$. Thus, $\lambda_{k} = 0$ if and only if $t = k\pi$, i.e., $t_{k} =
k\pi$ and $D(k\pi) = D[\lambda_{k} = 0]$. Note that $D(\pi)$ is an extremal
domain on which $\lambda_{1} = 0$.

\medskip\noindent (ii) Clearly, $g \in \mathcal{G}(D(t))$ is a Jacobi field on
$D(t)$ if and only if
\[ 
  Lg = -g''-g = c \quad
    \textrm{for some constant $c = \frac{1}{|D|} \int_{D} Lg$}.
\]
Here $D = D(t)$. The general solution is $g(x) = A\cos x + B\sin x - c$ with
\[ 
  g(0) = 0 = g(t), \quad \int_{0}^{t} g(x) \, dx = 0.
\]
A straightforward calculation shows that $D(t)$ has a (nontrivial) Jacobi field
if and only if
\[ 
  \psi(t) := 2 - 2\cos t - t\sin t = 0.
\]

\medskip\noindent (iii) Sketching the graph of $\psi$ (see Figure~\ref{F13}),
we find the zeros $l_{1}, l_{2}, l_{3}, \ldots$ of $\psi$ as follows:
\[ 
\begin{aligned}
  l_{1} &= 2\pi \in (\pi,2\pi], &\quad
  l_{2} &\in (5\pi/2,3\pi) \subset (2\pi,3\pi] \\
  l_{3} &= 4\pi \in (3\pi,4\pi], &\quad
  l_{4} &\in (9\pi/2,5\pi) \subset (4\pi,5\pi] \\
  l_{5} &= 6\pi \in (5\pi,6\pi], &\quad
  l_{6} &\in (13\pi/2,7\pi) \subset (6\pi,7\pi] \\
  l_{7} &= 8\pi \in \cdots.
\end{aligned}
\]
Therefore, $\varphi(0,l_{k})$ are exactly the domains on which some
(nontrivial) Jacobi fields exist. This one-dimensional example shows that
\[ 
  t_{k} < l_{k} \leq t_{k+1}
\]
which provides an evidence supporting Theorem~J.
\begin{figure}[H]
\centering
\includegraphics[width=0.7\textwidth]{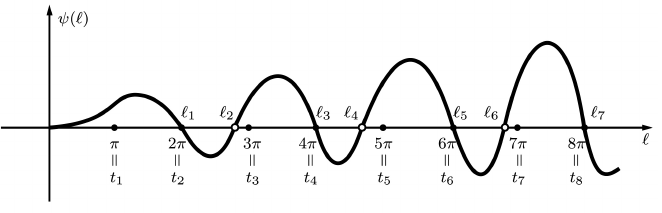}
\caption{} \label{F13}
\end{figure}
\end{ex}

\subsection{Proof of Theorem~J} \label{S5.3}
The idea of proving Theorem~J is sketched in the following steps.
\begin{enumerate}
\item[\textbf{J1}] Establish the continuity of $\lambda_{k}(t)$ and
$\widetilde{\lambda}_{k}(t)$ in $t$, which has been proved in Theorem~B.

\item[\textbf{J2}] Observe the relationship between
$\Lambda_{-}(t)$ and $\widetilde{\Lambda}_{-}(t)$ and show in the following
Lemma~D that
\[ 
  i(t)-1 \leq \widetilde{i}(t) \leq i(t).
\]

\item[\textbf{J3}] Use the global Morse index theorem (Theorem~A) to find the
number of independent Jacobi fields on $D(t)$, based on the estimate of
$\widetilde{i}(t)$ in J2.
\end{enumerate}

\begin{lemD} \label{LemD}
Given a $C^{0}$-monotone continuum $\mathcal{D} = \{ D(t) \subset M^{n} \:;\: t
\in [0,b] \}$ of generalized Lipschitz domains, let $i(t)$ and $\widetilde{i}(t)$ be Morse
indices of $L$ on $E_{t}$ and $\widetilde{L}$ on $H_{t}$ respectively. Then
(see Figure~\textup{\ref{F14}})
\begin{equation} \label{e5.11}
  i(t)-1 \leq \widetilde{i}(t) \leq i(t).
\end{equation}
\begin{figure}[H]
\centering
\includegraphics{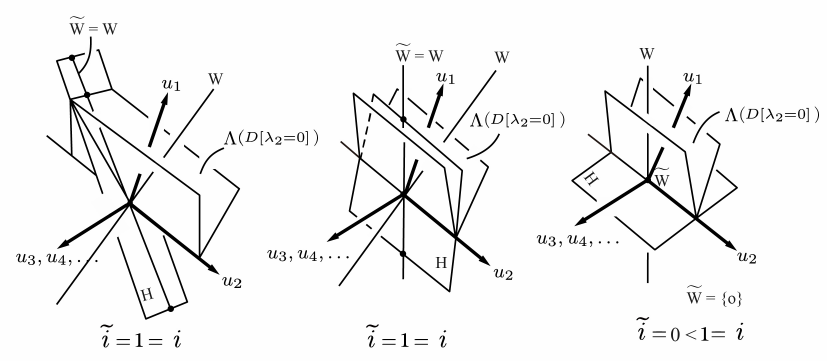}
\caption{} \label{F14}
\end{figure}
\end{lemD}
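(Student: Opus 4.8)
The plan is to deduce \eqref{e5.11} by comparing the quadratic forms $I$ on $E_t$ and $\widetilde{I}$ on $H_t$ purely linearly, exploiting the single structural fact that $H_t = E_t \cap H_b$ is a closed hyperplane of $E_t$ and that $\widetilde{I}$ is merely the restriction of $I$ to $H_t$ (compare \eqref{e2.8} and \eqref{e2.10}). Throughout, $\Lambda_-(t) := \{ f \in E_t \:;\: I(f,f) < 0 \} \cup \{0\}$ and $\widetilde{\Lambda}_-(t) := \{ \varphi \in H_t \:;\: \widetilde{I}(\varphi,\varphi) < 0 \} \cup \{0\}$ are the unstable cones of Definition~\ref{D2.11}, so that $i(t)$ (resp. $\widetilde{i}(t)$) is the dimension of a maximal linear subspace of $\Lambda_-(t)$ (resp. of $\widetilde{\Lambda}_-(t)$); both are finite, since by the spectral theorem of \S\ref{S2.2} only finitely many eigenvalues $\lambda_j(t)$, resp. $\widetilde{\lambda}_j(t)$, are negative (cf. Step~3 of the proof of Theorem~A).

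First I would prove the upper bound $\widetilde{i}(t) \le i(t)$. Let $\widetilde{W} \subset H_t$ be a maximal linear subspace contained in $\widetilde{\Lambda}_-(t)$, so $\dim \widetilde{W} = \widetilde{i}(t)$. Since $\widetilde{W} \subset H_t \subset E_t$ and $\widetilde{I}(\varphi,\varphi) = I(\varphi,\varphi)$ for every $\varphi \in H_t$, the space $\widetilde{W}$ is also a linear subspace of $E_t$ contained in $\Lambda_-(t)$; hence $i(t) \ge \dim \widetilde{W} = \widetilde{i}(t)$.

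Next the lower bound $i(t) - 1 \le \widetilde{i}(t)$. Let $W \subset E_t$ be a maximal linear subspace contained in $\Lambda_-(t)$, so $\dim W = i(t)$. The functional $\phi(f) := \int_{D(t)} f \, dM$ is linear and continuous on $E_t$, it is nonzero (being positive on any nonzero nonnegative element of $\mathcal{F}_c(D(t))$, which lies in $E_t$ by Theorem~Z), and $H_t = \ker \phi$; therefore $W' := W \cap H_t = \ker(\phi|_W)$ satisfies $\dim W' \ge \dim W - 1 = i(t) - 1$. On $W' \subset W$ the form $I$ is still negative definite, and since $W' \subset H_t$ we have $I|_{W'} = \widetilde{I}|_{W'}$; thus $W'$ is a linear subspace of $H_t$ contained in $\widetilde{\Lambda}_-(t)$, whence $\widetilde{i}(t) \ge \dim W' \ge i(t) - 1$. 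Combining the two bounds gives \eqref{e5.11}.

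I do not anticipate a genuine obstacle here: the content is the elementary fact that restricting a quadratic form to a hyperplane lowers its index by at most one, which is precisely the geometry depicted in Figure~\ref{F14} --- the unstable cone $\Lambda_-(t)$ of $E_t$ sliced by the hyperplane $H_t$. The only points deserving a line of care are that $H_t$ has codimension exactly one in $E_t$ (handled by the nonzero functional $\phi$ above) and that the two indices are finite (handled by the spectral theorem of \S\ref{S2.2}). Alternatively, \eqref{e5.11} could be read off the eigenvalue interlacing $\lambda_j(t) \le \widetilde{\lambda}_j(t) \le \lambda_{j+1}(t)$ in the spirit of Barbosa--B\'{e}rard \cite{BB00}, but the direct argument above is self-contained and serves equally for the Dirichlet case.
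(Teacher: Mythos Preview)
Your proof is correct and follows essentially the same approach as the paper: both arguments obtain the upper bound trivially from $\widetilde{\Lambda}_-(t) \subset \Lambda_-(t)$, and both obtain the lower bound by intersecting a maximal negative subspace with the hyperplane $H_t = \ker\phi$, losing at most one dimension. The only cosmetic difference is that the paper carries this out concretely in the eigenbasis, taking $W = \langle u_1,\ldots,u_l\rangle$ and writing down the explicit hyperplane $V = \{a_1 u_1 + \cdots + a_l u_l : a_1 = -(\sum_{j\ge 2} a_j \int u_j)/\int u_1\}$ using $\int u_1 \neq 0$, whereas you phrase the same step abstractly as $\dim\ker(\phi|_W) \ge \dim W - 1$.
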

\begin{proof}
\emph{Step~$1$.} Given $t \in (0,b]$, let $W$ and $\widetilde{W}$ be two
maximal linear subspaces contained in $\Lambda_{-}(t)$ and
$\widetilde{\Lambda}_{-}(t) \equiv \Lambda_{-}(t) \cap H_{t}$, respectively.
Evidently, $\widetilde{W} \subset W$, and hence $\widetilde{i}(t) \leq i(t)$.
By definition, $\Lambda_{-}(t)$ consists of $0$ and all $f = a_{1} u_{1} +
a_{2} u_{2} + \cdots$ in $E_{t}$ such that
\begin{equation} \label{e5.12}
  I(f)
  \equiv I(f,f)
  = \lambda_{1} a_{1}^{2} + \lambda_{2} a_{2}^{2} + \cdots
  < 0.
\end{equation}
Let $l \in \mathbb{N}$ be such that
\begin{equation} \label{e5.13}
  \lambda_{1} < \lambda_{2} \leq \lambda_{3} \leq \cdots \leq \lambda_{l} < 0
  \leq \lambda_{l+1} \leq \cdots \to \infty,
\end{equation}
where $\lambda_{j} = \lambda_{j}(t)$, $\forall\, j \in \mathbb{N}$. Consider
$W_{l} :=$ the linear span $\langle u_{1}, u_{2}, \ldots, u_{l} \rangle$. Then
$W_{l} \subset \Lambda_{-}(t)$ by \eqref{e5.12} and \eqref{e5.13}.

\bigskip\noindent\emph{Step~$2$.} Claim that $W_{l}$ is maximal. Suppose not,
then there exists an $h = h_{1} u_{1} + h_{2} u_{2} + \cdots \notin W_{l}$ with
$g+h \in \Lambda_{-}(t)$, $\forall\, g \in W_{l}$. Select $g = -h_{1} u_{1} -
h_{2} u_{2} - \cdots - h_{l} u_{l}$, then $g+h \neq 0$ and $I(g+h) =
\lambda_{l+1} u_{1}^{2} + \cdots > 0$, a contradiction. Therefore, $i(t) = \dim
W_{l} = l$.

\bigskip\noindent\emph{Step~$3$.} Remark that $u_{1} > 0$ on $D(t)$ and
$\int_{D(t)} u_{1} \neq 0$. Hence, $u_{1} \notin H_{t}$ and $W_{l} \not\subset
H_{t}$, since $u_{1} \in W_{l}$. Construct a hyperplane $V$ of $W_{l}$ such
that $V \subset \Lambda_{-}(t)$ as follows. Consider $g = a_{1} u_{1} + \cdots
+ a_{l} u_{l}$, where $a_{2}, \ldots, a_{l}$ are arbitrary constants and
\[ 
  a_{1} = \textstyle{(-a_{2} \int_{D} u_{2} - \cdots - a_{l} \int_{D} u_{l})
  \:\big/\: \int_{D} u_{1}}
\]
with $D = D(t)$. Then $\int_{D(t)} g = 0$ and $g \in H_{t}$. Let $V$ be the
$(l-1)$-dimensional linear subspace consisting of all such $g$. Then $V \subset
\widetilde{\Lambda}_{-}(t)$, and therefore $\widetilde{i}(t) \geq \dim V = l-1
= i(t)-1$.
\end{proof}

\begin{rmk} \label{R5.3}
For the relationship of $\Lambda_{-}(t)$ and $H_{t}$, there are three geometric
types. In Figures~\ref{F14}(a) and \ref{F14}(b), $\widetilde{i}(t) = 1 = i(t)$,
while in Figure~\ref{F14}(c), $\widetilde{i}(t) = 0 < 1 = i(t)$.
\end{rmk}

It remains to deal with J3 to finish the proof of Theorem~J.

\begin{proof}[Proof of Theorem~\textup{J}]
\emph{Step~$1$.} Recall that we have relabelled in \eqref{e5.4} the indices $j$
of the eigenvalues $\lambda_{j}$ of $L$ by $\overline{\lambda}_{k}$ with
multiplicities $m_{k}$, such that the adjacent eigenvalues are distinct. Let
$\overline{t}_{k}$ denote $t \in (0,b]$ such that $D(\overline{t}_{k}) =
D[\overline{\lambda}_{k} = 0]$. Denote by $\overline{t}^{+}_{k}$ for such $t$
that is sufficiently close to $\overline{t}_{k}$ from the right, i.e.,
$\overline{t}_{k} < \overline{t}^{+}_{k}$ and $\overline{t}^{+}_{k} -
\overline{t}_{k}$ is sufficiently small. Consider \eqref{e5.13} where
$\lambda_{j} = \lambda_{j}(\overline{t}^{+}_{k})$. Assume Theorem~B which
states that $\lambda_{j}(t)$ decreases strictly and continuously in $t$. As $t$
increases from $0$ to $\overline{t}^{+}_{k}$, those $j$ for which
$\lambda_{j}(t)$ pass through zero and become negative are exactly $j =
1,2,\ldots,l$ with
\[ 
  l = m_{1} + m_{2} + \cdots + m_{k} = n_{k}.
\]
Thus $i(\overline{t}^{+}_{k}) = n_{k}$, and by \eqref{e5.11} in Lemma~D,
\[ 
  n_{k}-1 \leq \widetilde{i}(\overline{t}^{+}_{k}) \leq n_{k}.
\]

\bigskip\noindent\emph{Step~$2$.} Applying the Morse index theorem (Theorem~A),
we see that
\begin{equation} \label{e5.14}
  n_{k}-1 \leq \mu(0,\overline{t}_{k}] \leq n_{k}
\end{equation}
since $\mu(0,\overline{t}_{k}] = \sum_{0 < t \leq \overline{t}_{k}}
\widetilde{\nu}(t) = \widetilde{i}(\overline{t}^{+}_{k})$. But
\[ 
  \mu(\overline{t}_{k-1},\overline{t}_{k}]
  = \mu(0,\overline{t}_{k}] - \mu(0,\overline{t}_{k-1}].
\]
Using \eqref{e5.14}, we obtain
\[ 
  m_{k}-1 \leq \mu(\overline{t}_{k-1},\overline{t}_{k}] \leq m_{k}+1,
\]
noting that $m_{k} = n_{k} - n_{k-1}$. This shows \eqref{e5.8} in Theorem~J. By
considering $\mu[\overline{t}_{k-1}, \overline{t}_{k}] =
\mu(0,\overline{t}_{k}] - \mu(0,\overline{t}_{k-1}^{-}]$, where
$\overline{t}_{k-1}^{-}< \overline{t}_{k-1}$ is a number sufficiently close to
$\overline{t}_{k-1}$, we obtain \eqref{e5.7} with the similar argument as
above. Hence \eqref{e5.6} is established.

\bigskip\noindent\emph{Step~$3$.}
To show \eqref{e5.9}, we note that
\[ 
  \mu \{\overline{t}_{k}\}
  = \mu(0,\overline{t}_{k}] - \mu(0,\overline{t}^{-}_{k}].
\]
By the similar argument of Step~2, it is obtained that
\[ 
  m_{k}-1 \leq \mu \{\overline{t}_{k}\} \leq m_{k}+1.
\]
The proof of Theorem~J is completed.
\end{proof}

Theorem~J answers Problem~\ref{Prob1} given in the preface.


\begin{flushleft}
Wu-Hsiung Huang \\
Department of Mathematics, National Taiwan University, Taipei 106, Taiwan \\
\emph{E-mail address}: \texttt{whuang0706@gmail.com}
\end{flushleft}
\end{document}